\theoremstyle{plain}
\newtheorem{theorem}{\bf Theorem}[section]
\newtheorem{claim}[theorem]{\bf Claim}
\newtheorem{conjecture}[theorem]{\bf Conjecture}
\newtheorem{proposition}[theorem]{\bf Proposition}
\newtheorem{corollary}[theorem]{\bf Corollary}
\newtheorem{lemma}[theorem]{\bf Lemma}
\theoremstyle{definition}
\newenvironment{remark}[1][Remark.]{\begin{trivlist}
		\item[\hskip \labelsep {\bfseries #1}]}{\end{trivlist}}
\newcommand{\Rea}{{\mathbb R}}
\DeclareMathOperator{\rank}{\text{rank}}
\DeclareMathOperator{\T}{\top}
\newcommand{\lfrac}[2]{\left\lfloor\frac{#1}{#2}\right\rfloor}
\newcommand{\blowup}[2]{ {#1}^{(#2)} }
\newcommand{\lminusva}[2]{L^{#1 \downarrow}_{#2}}
\DeclareMathOperator{\Ima}{Im}
\DeclareMathOperator{\mult}{\text{mult}}
\DeclareMathOperator{\Spec}{\text{Spec}}
\renewcommand{\eprint}[1]{\href{https://arxiv.org/abs/#1}{arXiv:#1}}
\renewcommand{\PrintNames@a}[4]{%
	\PrintSeries{\name}
	{#1}
	{}{ and \set@othername}
	{,}{ \set@othername}
	{}{ and \set@othername}
	{#2}{#4}{#3}%
}
\def\@fnsymbol#1{\ensuremath{\ifcase#1\or *\or **\or 
   \mathsection\or \mathparagraph\or \|\or  \dagger\dagger
   \or \ddagger\ddagger \else\@ctrerr\fi}}
\begin{document}

\title{Stiffness matrices of graph blow-ups and the $d$-dimensional algebraic connectivity of complete bipartite graphs}

\author{Yunseong Jung\thanks{Department of Mathematics, University of California, San Diego, La Jolla CA 92093, USA. Email: \href{mailto:y8jung@ucsd.edu}{y8jung@ucsd.edu}. Research supported by the Porges Family Fund for Undergraduate Research through the Office of Undergraduate Research and Scholar Development at Carnegie Mellon University.
}
}

    \author{Alan Lew\thanks{Dept. Math. Sciences, Carnegie Mellon University, Pittsburgh, PA 15213, USA. Email: \href{mailto:alanlew@andrew.cmu.edu}{alanlew@andrew.cmu.edu}.}}

    \affil{}
   
    \date{}
\maketitle

\begin{abstract}

The $d$-dimensional algebraic connectivity $a_d(G)$ of a graph $G=(V,E)$ is a quantitative measure of its $d$-dimensional rigidity, defined in terms of the eigenvalues of stiffness matrices associated with different embeddings of the graph into $\mathbb{R}^d$. 
For a function $a:V\to \mathbb{N}$, we denote by $G^{(a)}$ the $a$-blow-up of $G$, that is, the graph obtained from $G$ by replacing every vertex $v\in V$ with an independent set of size $a(v)$. We determine a relation between the stiffness matrix eigenvalues of $G^{(a)}$ and the eigenvalues of certain weighted stiffness matrices associated with the original graph $G$. This resolves, as a special case, a conjecture of Lew, Nevo, Peled and Raz on the stiffness eigenvalues of balanced blow-ups of the complete graph.

As an application, we obtain a lower bound on the $d$-dimensional algebraic connectivity of complete bipartite graphs. More precisely, we prove the following: Let $K_{n,m}$ be the complete bipartite graph with sides of size $n$ and $m$ respectively. Then, for every $d\ge 1$ there exists $c_d>0$ such that, for all $n,m\ge d+1$ with $n+m\ge \binom{d+2}{2}$, $a_d(K_{n,m})\geq c_d\cdot \min\{n,m\}$. This bound is tight up to the multiplicative constant. In the special case $d=2$, $n=m=3$, we obtain the improved bound $a_2(K_{3,3})\ge 2(1-\lambda)$, where $\lambda\approx 0.6903845$ is the unique positive real root of the polynomial $176 x^4-200 x^3+47 x^2+18 x-9$, which we conjecture to be tight.
\end{abstract}

\section{Introduction}

Let $d\ge 1$. A \textit{$d$-dimensional framework} is a pair $(G,p)$, where $G=(V,E)$ is a finite, simple graph, and $p$ is a map from $V$ to $\mathbb{R}^d$. The framework $(G,p)$ is called \textit{rigid} if there is no continuous motion of the vertices, starting from the positions specified by $p$, that preserves the distance between all pairs of adjacent vertices (except for the trivial motions --- rotations and translations of the whole graph).

An embedding $p:V\to\Rea^d$ is called \emph{generic} if the $d|V|$ coordinates of $p$ are algebraically independent over the rationals. It was shown by Asimow and Roth in \cite{AR78} that, for every graph $G$, if $(G,p)$ is rigid for some generic embedding $p:V\to\Rea^d$, then $(G,p)$ is rigid for \emph{all} $d$-dimensional generic embeddings. In such a case, we say that $G$ is \emph{$d$-rigid}.

Let $(G,p)$ be a $d$-dimensional framework. For a pair vertices $u,v\in V$, we define $d_{uv}\in \Rea^d$ by
\[
    d_{uv}=\begin{cases}
        \frac{p(u)-p(v)}{\|p(u)-p(v)\|} & \text{if } p(u)\ne p(v),\\
        0 &\text{otherwise.}
    \end{cases}
\]
The (normalized) \emph{rigidity matrix} of $(G,p)$ is the matrix $R(G,p)\in \Rea^{d|V|\times |E|}$ defined as follows. Let the rows of $R(G,p)$ be indexed by pairs $(u,i)$, where $u\in V$ and $i\in[d]=\{1,2,\ldots,d\}$, and let its columns be indexed by the edges $e\in E$. Then, we define
\[
R(G,p)_{(u,i),e}= \begin{cases}
    (d_{uv})_i & \text{if } e=\{u,v\} \text{ for some } v\in V,\\
    0 & \text{otherwise,}
\end{cases}
\]
for all $u\in V$, $i\in[d]$ and $e\in E$.
For simplicity, we will assume from now on that the image of $V$ under $p$ is $d$-dimensional (that is, it is not contained in any hyperplane), and in particular $|V|\ge d+1$. 
It is well known (see \cite{AR78}) that the rank of $R(G,p)$ is always at most $d|V|-\binom{d+1}{2}$. The framework $(G,p)$ is called \textit{infinitesimally rigid} if the rank of $R(G,p)$ is exactly $d|V| - \binom{d+1}{2}$. 
Every infinitesimally rigid framework is rigid (see \cite{Gluck1974}). Moreover, for a generic embedding $p$, $(G,p)$ is rigid if and only if it is infinitesimally rigid. In fact, a graph $G=(V,E)$ with at least $d+1$ vertices is $d$-rigid if and only if $\rank(R(G,p))=d|V|-\binom{d+1}{2}$ for some map $p:V\to\Rea^d$ (see \cite{AR78}).

The \emph{stiffness matrix} $L(G,p)$ is defined as 
\[
L(G,p) = R(G,p)R(G,p)^{\T} \in \Rea^{d|V| \times d|V|}.
\]
Since $L(G,p)$ is positive semi-definite, all its eigenvalues are non-negative real numbers. For $1\le k\le d|V|$, we denote by $\lambda_k(L(G,p))$ the $k$-th smallest eigenvalue of $L(G,p)$.   Since $\rank(L(G,p)) = \rank(R(G,p)) \leq d|V| - \binom{d+1}{2}$, the kernel of $L(G,p)$ has dimension at least $\binom{d+1}{2}$. Hence,
\[
\lambda_1(L(G,p)) = \lambda_2(L(G,p)) = \cdots = \lambda_{\binom{d+1}{2}}(L(G,p)) = 0.
\]
We call the next eigenvalue, $\lambda_{{d+1\choose 2}+1}(L(G,p))$, the \textit{spectral gap} of $(G,p)$. Jord\'{a}n and Tanigawa defined in \cite{JT22} the \textit{$d$-dimensional algebraic connectivity} of $G$ as
\[
a_d(G) = \sup\left\{\lambda_{{d+1\choose 2}+1}(L(G,p))\,  \bigg\vert\,  p: V \rightarrow \Rea^d\right\}.
\]
Note that $a_d(G)>0$ if and only if $G$ is $d$-rigid. We may think of the $d$-dimensional algebraic connectivity as a quantitative measure of the rigidity of the graph. For example, in \cite{JT22}, Jord\'{a}n and Tanigawa showed that dense enough random subgraphs of graphs with large $d$-dimensional algebraic connectivity are, with high probability, $d$-rigid (see \cite[Corollary 8.2]{JT22} for a precise statement). 

For $d=1$ and any embedding $p: V \rightarrow \Rea$, $L(G,p)$ is equal to the Laplacian matrix $L(G)$ of the graph $G$. Therefore, $a_1(G)=\lambda_2(L(G))$ coincides with the classical notion of the algebraic connectivity of $G$, introduced by Fiedler in \cite{Fie73}.

It is generally difficult to determine the exact value of the $d$-dimensional algebraic connectivity of a graph. For example, let $K_n$ be the complete graph on $n$ vertices.
It is well known and easy to prove that $a_1(K_n)=n$. Jord{\'a}n and Tanigawa showed in \cite{JT22}, building on previous results by Zhu \cite{zhu2013quantitative}, that $a_2(K_n)=n/2$. However, for $d\ge 3$, only partial results are known: In \cite{LNPR2023onthe} it was shown that $a_d(K_{d+1})=1$ for all $d\ge 3$. For general $n\ge d+1$, the best currently known bounds, proved in \cite{LNPR2023onthe,LNPR23+}, are 
\[
\frac{1}{2}\cdot \lfrac{n}{d} \leq a_d(K_n) \leq \frac{2n}{3(d-1)} + \frac{1}{3}.
\]

\subsection{Weighted rigidity matrices and the blow-up of a framework}

Let $G=(V,E)$ be a graph, and let  $p:V\to\Rea^d$ and $f:V\to \Rea_{>0}$. We define the \emph{weighted rigidity matrix} $R_f(G,p)\in \Rea^{d|V|\times |E|}$ by
\[
R_f(G,p)_{(u,i),e}=\begin{cases}
    \sqrt{f(v)}(d_{uv})_i & \text{if } e=\{u,v\} \text{ for some } v\in V,\\
    0 & \text{otherwise},
\end{cases}
\]
for all $u\in V$, $i\in[d]$ and $e\in E$.  
Let $L_f(G,p)=R_f(G,p) R_f(G,p)^{\T}\in \Rea^{d|V|\times d|V|}$.

For a vertex $v\in V$, let $N_G(v)$ denote the set of neighbors of $v$ in $G$, and let $\deg_G(v)=|N_G(v)|$. 
Let $R^{v}_f(G,p)\in \Rea^{d\times \deg_G(v)}$ be defined by
\[
    R^{v}_f(G,p)_{i,u}=\sqrt{f(u)}(d_{vu})_i
\]
for all $i\in[d]$ and $u\in N_G(v)$.
That is, for every $u\in N_G(v)$, the column of $R^v_f(G,p)$ indexed by $u$ is the vector $\sqrt{f(u)}d_{vu}$. Let $L^v_f(G,p)=R^{v}_f(G,p) R^{v}_f(G,p)^{\T}\in \Rea^{d\times d}$. 
If $f\equiv 1$, we denote $R^{v}_f(G,p)=R^v(G,p)$ and $L^v_f(G,p)=L^v(G,p)$.

Let $G=(V,E)$ be a graph, and let $a:V\to\mathbb{N}$. 
Let
\[
    \blowup{V}{a}= \{(v,i):\, v\in V, 1\leq i\leq a(v)\}
\]
and
\[
    \blowup{E}{a}=\left\{ \{(u,i),(v,j)\} :\, (u,i),(v,j)\in \blowup{V}{a}, \{u,v\}\in E\right\}.
\]
We call the graph $\blowup{G}{a}=(\blowup{V}{a},\blowup{E}{a})$ the \emph{$a$-blow-up of $G$}.
For $p:V\to \Rea^d$, we define $\blowup{p}{a}:\blowup{V}{a}\to \Rea^d$ by
\[
    \blowup{p}{a}((v,i))=p(v)
\]
for all $(v,i)\in \blowup{V}{a}$.
In the special case when $a$ is the constant function  $a\equiv k$, for some $k\in \mathbb{N}$, we denote $\blowup{G}{a}=\blowup{G}{k}$ and $\blowup{p}{a}=\blowup{p}{k}$. 

For a symmetric matrix $M\in\Rea^{n\times n}$, let $\text{Spec}(M)$ be the \emph{spectrum} of $M$. That is, $\text{Spec}(M)$ is the multiset whose elements are the eigenvalues of $M$. For a multiset $S$ and $k\in\mathbb{N}$, let $S^{[k]}$ be the multiset whose elements are the elements of $S$, each repeated $k$ times (for convenience, we define $S^{[0]}=\emptyset$). For $\alpha\in \Rea$, we denote by $\alpha S$ the multiset obtained from $S$ by multiplying each of its elements by $\alpha$.

Our main result is the following theorem, which provides a complete description of the stiffness matrix spectrum of the blow-up of a framework.

\begin{theorem}\label{thm:blow_up}
    Let $G=(V,E)$ be a graph, and let $p:V\to \Rea^d$ and $a:V\to\mathbb{N}$.
    Then,
    \[
        \text{Spec}\left(L(\blowup{G}{a},\blowup{p}{a})\right)= \left(\bigcup_{v\in V} \text{Spec}(L_a^v(G,p))^{[a(v)-1]}\right)\cup \text{Spec}(L_a(G,p)).
    \]
\end{theorem}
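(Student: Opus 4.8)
The plan is to exploit the block structure of $L(\blowup{G}{a},\blowup{p}{a})$ induced by the partition of $\blowup{V}{a}$ into the fibers $F_v=\{(v,i):1\le i\le a(v)\}$, $v\in V$. Writing a vector $\xi\in\Rea^{d|\blowup{V}{a}|}$ in block form as $\xi=(\xi_{(v,i)})$ with $\xi_{(v,i)}\in\Rea^d$, a direct computation of the $d\times d$ blocks of $L(\blowup{G}{a},\blowup{p}{a})=R(\blowup{G}{a},\blowup{p}{a})R(\blowup{G}{a},\blowup{p}{a})^{\T}$ --- using that $\blowup{p}{a}$ is constant on each fiber, so that $d_{(u,i),(v,j)}=d_{uv}$; that distinct copies of a vertex are non-adjacent; and that the diagonal block at $(v,i)$ equals $\sum_{u\in N_G(v)}a(u)\,d_{vu}d_{vu}^{\T}=L_a^v(G,p)$ --- yields
\[
\bigl(L(\blowup{G}{a},\blowup{p}{a})\,\xi\bigr)_{(v,i)}
= L_a^v(G,p)\,\xi_{(v,i)}
-\sum_{u\in N_G(v)} d_{vu}d_{vu}^{\T}\sum_{j=1}^{a(u)}\xi_{(u,j)} .
\]
The crucial feature is that fiber $u$ influences fiber $v$ only through the fiberwise sum $\sum_{j}\xi_{(u,j)}$, with a coefficient independent of the copy indices.

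This suggests decomposing $\Rea^{d|\blowup{V}{a}|}=U_0\oplus U_1$, where
\[
U_1=\bigl\{\xi:\ \xi_{(v,i)}\text{ is independent of }i\text{ for every }v\in V\bigr\}
\qquad\text{and}\qquad
U_0=\bigl\{\xi:\ \textstyle\sum_{i=1}^{a(v)}\xi_{(v,i)}=0\text{ for every }v\in V\bigr\}=U_1^{\perp}.
\]
Both are invariant under $L(\blowup{G}{a},\blowup{p}{a})$: on $U_0$ every fiberwise sum vanishes, so the displayed formula collapses to $\xi_{(v,i)}\mapsto L_a^v(G,p)\,\xi_{(v,i)}$, which visibly preserves $U_0$; on $U_1$ the right-hand side is again independent of $i$. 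Since $U_0$ and $U_1$ are complementary (indeed orthogonal) invariant subspaces, the characteristic polynomial of $L(\blowup{G}{a},\blowup{p}{a})$ factors as the product of those of the two restrictions, and it remains to identify each restriction.

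On $U_0$, the operator acts fiberwise as $L_a^v(G,p)$ on the $(a(v)-1)$-dimensional space $\{x\in\Rea^{a(v)}:\sum_i x_i=0\}\otimes\Rea^d$; hence its restriction is equivalent to $\bigoplus_{v\in V}\bigl(I_{a(v)-1}\otimes L_a^v(G,p)\bigr)$, whose spectrum is $\bigcup_{v\in V}\text{Spec}(L_a^v(G,p))^{[a(v)-1]}$. On $U_1$, identify $U_1$ with $\Rea^{d|V|}$ via the orthonormal basis whose $(v,k)$ vector equals $\tfrac{1}{\sqrt{a(v)}}e_k$ on every copy of $v$ (with $e_1,\dots,e_d$ the standard basis of $\Rea^d$) and $0$ elsewhere. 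Substituting these basis vectors into the displayed formula, the matrix of the restriction has diagonal $v$-block $L_a^v(G,p)$ and, for $\{u,v\}\in E$, $(v,u)$-block $-\sqrt{a(u)a(v)}\,d_{vu}d_{vu}^{\T}$; this matrix is exactly $L_a(G,p)=R_a(G,p)R_a(G,p)^{\T}$, as one checks from the definition of $R_a(G,p)$. (Had we used the unnormalized fiberwise-constant basis, we would instead obtain a matrix conjugate to $L_a(G,p)$ by $\operatorname{diag}\bigl(a(v)^{-1/2}I_d\bigr)_{v\in V}$, which has the same spectrum.) Combining the two restrictions gives the claimed identity.

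The underlying computations are routine, so the real content is the choice of the invariant subspaces: morally, the ``antisymmetric'' directions along a fiber of $v$ each contribute a copy of the local weighted stiffness matrix $L_a^v(G,p)$, while the ``symmetric'', fiberwise-constant directions reproduce the global weighted stiffness matrix $L_a(G,p)$. I expect the one delicate point to be the bookkeeping of the factors $\sqrt{a(\cdot)}$: one must use the $\tfrac{1}{\sqrt{a(v)}}$-normalized basis of $U_1$ precisely so that the weights generated by the fiber sums become the symmetric weights $\sqrt{a(u)a(v)}$ appearing in $R_a(G,p)$, rather than ending up with a non-symmetric matrix merely similar to $L_a(G,p)$.
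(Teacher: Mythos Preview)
Your proposal is correct and follows essentially the same strategy as the paper: both decompose $\Rea^{d|\blowup{V}{a}|}$ into the fiberwise-constant directions (your $U_1$, the paper's $\Ima T$ with $T\phi((u,i))=a(u)^{-1/2}\phi(u)$) and the fiberwise-sum-zero directions (your $U_0$, the paper's $\bigoplus_{v,k}\Ima T_{v,k}$), verify invariance, and identify the restrictions with $L_a(G,p)$ and $\bigoplus_v L_a^v(G,p)^{\oplus(a(v)-1)}$ respectively. The only cosmetic difference is that the paper splits $U_0$ further into explicit (non-orthogonal) $d$-dimensional pieces $\Ima T_{v,k}$ and checks the intertwining relation $T_{v,k}L_a^v=L\,T_{v,k}$ on each, whereas you invoke the tensor structure $I_{a(v)-1}\otimes L_a^v(G,p)$ directly; your packaging is slightly cleaner, but the content is the same.
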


In the special case $a\equiv k$, for some $k\in\mathbb{N}$, we obtain the following.
\begin{corollary}\label{cor:blow_up}
    Let $G=(V,E)$ be a graph, and let $p:V\to\Rea^d$.
    Then, for all $k\geq 2$, 
        \[
        \text{Spec}\left(L(\blowup{G}{k},\blowup{p}{k})\right)= k \left(\left(\bigcup_{v\in V} \text{Spec}(L^v(G,p))^{[k-1]}\right)\cup \text{Spec}( L(G,p))\right).
    \]
    In particular,
    \[
    \lambda_{\binom{d+1}{2}+1}\left(L(\blowup{G}{k},\blowup{p}{k})\right)
    =\frac{k}{2}\cdot  \lambda_{\binom{d+1}{2}+1}\left(L(\blowup{G}{2},\blowup{p}{2})\right).
    \]
\end{corollary}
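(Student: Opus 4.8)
The plan is to deduce both assertions of Corollary~\ref{cor:blow_up} from Theorem~\ref{thm:blow_up}. For the first identity, apply Theorem~\ref{thm:blow_up} with the constant function $a\equiv k$. For a constant weight function $f\equiv k$, every entry of $R_f(G,p)$ (resp.\ of $R^v_f(G,p)$) equals the corresponding entry of $R(G,p)$ (resp.\ of $R^v(G,p)$) multiplied by $\sqrt k$, so $L_a(G,p)=k\,L(G,p)$ and $L_a^v(G,p)=k\,L^v(G,p)$ for every $v\in V$; hence $\text{Spec}(L_a(G,p))=k\,\text{Spec}(L(G,p))$ and $\text{Spec}(L_a^v(G,p))=k\,\text{Spec}(L^v(G,p))$. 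Substituting these into Theorem~\ref{thm:blow_up}, using that $a(v)-1=k-1$ for all $v$ and that scalar multiplication commutes with multiset unions and with the repetition operation $(\cdot)^{[k-1]}$, we may pull the factor $k$ out of the whole right-hand side, which is exactly the first displayed identity.

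For the ``in particular'' part, set $S_k=\bigl(\bigcup_{v\in V}\text{Spec}(L^v(G,p))^{[k-1]}\bigr)\cup\text{Spec}(L(G,p))$, so that by the first part $\text{Spec}(L(\blowup{G}{k},\blowup{p}{k}))=k\,S_k$, and in particular the $j$-th smallest eigenvalue of $L(\blowup{G}{k},\blowup{p}{k})$ is $k$ times the $j$-th smallest element of $S_k$ (since $k>0$). Taking $j=\binom{d+1}{2}+1$, the asserted identity reduces to the claim that the $\bigl(\binom{d+1}{2}+1\bigr)$-th smallest element of $S_k$ is the same for all $k\ge 2$. As multisets $S_2\subseteq S_k$, and $S_k\setminus S_2$ consists of $k-2$ extra copies of each eigenvalue of each $L^v(G,p)$, every one of which already occurs in $S_2$.

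To prove the claim, count the zeros of $S_2$: this number equals $\sum_{v\in V}\dim\ker L^v(G,p)+\dim\ker L(G,p)$, and since the image of $p$ is $d$-dimensional, $\dim\ker L(G,p)=d|V|-\rank R(G,p)\ge\binom{d+1}{2}$. If $S_2$ has strictly more than $\binom{d+1}{2}$ zeros, then its $\bigl(\binom{d+1}{2}+1\bigr)$-th smallest element is $0$; since $S_2\subseteq S_k$ and all elements of $S_k$ are non-negative, the $\bigl(\binom{d+1}{2}+1\bigr)$-th smallest element of $S_k$ is $0$ as well, and both sides of the identity vanish. Otherwise $S_2$ has exactly $\binom{d+1}{2}$ zeros, which together with $\dim\ker L(G,p)\ge\binom{d+1}{2}$ forces every $L^v(G,p)$ to be nonsingular and $\dim\ker L(G,p)=\binom{d+1}{2}$; then all the extra copies in $S_k\setminus S_2$ are strictly positive, so $S_k$ still has exactly $\binom{d+1}{2}$ zeros, and since these extra copies are values already present (and positive) in $S_2$, the smallest positive element of $S_k$ equals that of $S_2$. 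In either case the $\bigl(\binom{d+1}{2}+1\bigr)$-th smallest element of $S_k$ equals that of $S_2$, as needed.

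I expect the only substantive step to be this dichotomy: the observation that each $L^v(G,p)$, being a $d\times d$ positive semidefinite matrix, is either nonsingular --- in which case raising $k$ merely increases multiplicities of positive eigenvalues and leaves the bottom of the spectrum of $S_k$ untouched --- or singular, in which case the blow-up already has more than $\binom{d+1}{2}$ zero eigenvalues at $k=2$ and the identity holds trivially. The remaining steps are routine manipulations of multisets; the one hypothesis worth noting is that the image of $\blowup{p}{k}$ is $d$-dimensional (needed to invoke $\rank R\le d|V|-\binom{d+1}{2}$), which holds because it coincides with the image of $p$.
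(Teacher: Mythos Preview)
Your proof is correct and follows essentially the same approach as the paper: apply Theorem~\ref{thm:blow_up} with $a\equiv k$, observe $L_a=kL$ and $L_a^v=kL^v$, and then argue that the $\bigl(\binom{d+1}{2}+1\bigr)$-th smallest element of the resulting multiset is independent of $k\ge 2$. The only stylistic difference is that the paper bypasses your singular/nonsingular dichotomy by writing down the explicit formula
\[
\lambda_{\binom{d+1}{2}+1}\!\left(L(\blowup{G}{k},\blowup{p}{k})\right)=k\cdot\min\Bigl(\{\lambda_1(L^v(G,p)):v\in V\}\cup\{\lambda_{\binom{d+1}{2}+1}(L(G,p))\}\Bigr),
\]
from which the $k$-independence (up to the factor $k$) is immediate; your case analysis establishes the same fact by a slightly longer route.
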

Corollary \ref{cor:blow_up} solves a conjecture posed by Lew, Nevo, Peled and Raz in \cite[Conjecture 6.3]{LNPR2023onthe}, corresponding to the special case when $G=K_n$.

\subsection{The $d$-dimensional algebraic connectivity of complete bipartite graphs}

Let $K_{n,m}$ be the complete bipartite graph with sides of size $n$ and $m$, respectively. Bolker and Roth commenced in \cite{BR80} the study of the rigidity of complete bipartite graphs. Based on their work, Whiteley \cite{Whi84}, and independently Raymond \cite{Ray84} (see also \cite[Theorem 1.2]{Jor22} and \cite[Corollary 4]{Nix21}), proved the following.

\begin{theorem}[Whiteley \cite{Whi84}, Raymond \cite{Ray84}]\label{thm:whiteley}
Let $n,m\geq 2$. Then, the complete bipartite graph $K_{n,m}$ is $d$-rigid if and only if $n,m\geq d+1$ and $n+m\geq \binom{d+2}{2}$.
\end{theorem}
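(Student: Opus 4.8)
The plan is to reduce, via the Asimow--Roth theorem quoted above, to proving that $(K_{n,m},p)$ is infinitesimally rigid for a generic $p$, and then to establish a ``Bolker--Roth dictionary'' between infinitesimal flexes of $K_{n,m}$ and quadrics through its vertex set. Write the colour classes of $K_{n,m}$ as $P=\{p_1,\dots,p_n\}$ and $Q=\{q_1,\dots,q_m\}$, viewed as point sets in $\Rea^d$ via $p$, and for $x\in\Rea^d$ put $\hat x=(x,1)\in\Rea^{d+1}$. The central claim is: \emph{if $P$ and $Q$ each affinely span $\Rea^d$, then $(K_{n,m},p)$ is infinitesimally rigid if and only if no nonzero quadratic polynomial on $\Rea^d$ vanishes at every point of $P\cup Q$.}

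One implication is easy: if a nonzero quadratic $g(x)=x^{\T}Ax+2b^{\T}x+c$ (with $A$ symmetric) vanishes on $P\cup Q$, then the velocity field $\dot p(p_i)=Ap_i+b$, $\dot p(q_j)=-(Aq_j+b)$ is an infinitesimal flex, because along each edge $\langle\dot p(p_i)-\dot p(q_j),\,p_i-q_j\rangle$ telescopes to $g(p_i)-g(q_j)=0$; here $A\ne0$ (otherwise $g$ is a nonzero affine function, vanishing either nowhere or on a hyperplane containing the affinely spanning set $P$, both absurd), so on $P$ the flex is the affine map $x\mapsto Ax+b$ with symmetric nonzero linear part, hence not an infinitesimal isometry, and $(K_{n,m},p)$ is not infinitesimally rigid. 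The converse --- extracting a quadric from a nontrivial flex --- is the crux, and I expect it to be the main obstacle, since it is here that the bipartite structure enters essentially. Given a nontrivial flex $\dot p$, set $u_i=\dot p(p_i)$, $w_j=\dot p(q_j)$ and lift to $\hat u_i=(u_i,-u_i^{\T}p_i)$, $\hat w_j=(w_j,-w_j^{\T}q_j)$ in $\Rea^{d+1}$; then $\hat p_i^{\T}\hat u_i=0$, $\hat q_j^{\T}\hat w_j=0$, and the flex equations become $\hat u_i^{\T}\hat q_j+\hat p_i^{\T}\hat w_j=0$ for all $i,j$. Since the $\hat q_j$ span $\Rea^{d+1}$ (because $Q$ affinely spans $\Rea^d$), for each $i$ these equations determine $\hat u_i=\hat C\hat p_i$ for one fixed matrix $\hat C\in\Rea^{(d+1)\times(d+1)}$; substituting back and using that the $\hat p_i$ likewise span $\Rea^{d+1}$ forces $\hat w_j=-\hat C^{\T}\hat q_j$. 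Decomposing $\hat C=\hat A+\hat S$ into symmetric and skew parts, the identities $0=\hat p_i^{\T}\hat u_i=\hat p_i^{\T}\hat A\hat p_i$ and $0=\hat q_j^{\T}\hat w_j=-\hat q_j^{\T}\hat A\hat q_j$ say exactly that $\hat A$ is the matrix of a quadratic vanishing on all $n+m$ points; and if $\hat A=0$ then $\hat u_i=\hat S\hat p_i$, $\hat w_j=\hat S\hat q_j$ present $\dot p$ as an infinitesimal isometry, contradicting nontriviality. So $\hat A\ne0$, proving the claim.

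The theorem follows by combining this with the standard count that the space of quadratic polynomials in $d$ variables has dimension $\binom{d+2}{2}$, each point imposing one linear vanishing condition; hence a point set in sufficiently general position --- in particular, the image of a generic $p$, which moreover has any $d+1$ of its points affinely independent --- lies on a common quadric if and only if it has at most $\binom{d+2}{2}-1$ points. Thus, when $n,m\ge d+1$ and $p$ is generic, $P$ and $Q$ each affinely span $\Rea^d$, and $(K_{n,m},p)$ is infinitesimally rigid if and only if $P\cup Q$ lies on no common quadric, i.e.\ if and only if $n+m\ge\binom{d+2}{2}$; by Asimow--Roth, $K_{n,m}$ is $d$-rigid in this range precisely under that condition.

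It remains to rule out $d$-rigidity when $\min\{n,m\}\le d$ (the case $d=1$ being vacuous, since $n,m\ge2$). If also $\max\{n,m\}\ge d$, then a short computation shows $nm\le d(n+m)-\binom{d+1}{2}-\binom{d}{2}<d(n+m)-\binom{d+1}{2}$, so the rigidity matrix of $K_{n,m}$, having only $nm$ columns, cannot attain rank $d|V|-\binom{d+1}{2}$, and $(K_{n,m},p)$ is infinitesimally flexible for every $p$. If instead $\max\{n,m\}\le d-1$, then every vertex of $K_{n,m}$ has degree at most $d-1$: when $n+m\ge d+2$, fixing all vertices but one and moving that one orthogonally to the (at most $d-1$) directions to its neighbours is an infinitesimal flex, nontrivial since it vanishes on a set of at least $d+1$ affinely independent points; and when $n+m\le d+1$, the graph has at most $nm<\binom{d+1}{2}$ edges, too few for $d$-rigidity. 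In every case $K_{n,m}$ is not $d$-rigid, so $K_{n,m}$ is $d$-rigid if and only if $n,m\ge d+1$ and $n+m\ge\binom{d+2}{2}$.
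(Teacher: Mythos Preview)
The paper does not prove Theorem~\ref{thm:whiteley}; it merely quotes it as a known result of Whiteley and Raymond (building on Bolker--Roth) and cites \cite{Whi84,Ray84,BR80}, with further references to \cite{Jor22,Nix21,krivelevich2023rigid} for alternative proofs. So there is no ``paper's own proof'' to compare against. That said, your argument is precisely the classical Bolker--Roth route that underlies the cited results: lift a flex to $\Rea^{d+1}$, use the fact that each colour class affinely spans to show the lifted velocities are of the form $\hat C\hat p_i$ and $-\hat C^{\T}\hat q_j$ for a single matrix $\hat C$, and read off a quadric from the symmetric part of $\hat C$. The two directions of your central claim are correctly argued, and the deduction for generic $p$ with $n,m\ge d+1$ via the dimension count on quadrics is fine (the Veronese evaluation conditions are independent at algebraically independent points because the relevant determinant is a nonzero integer polynomial).

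Two small points. First, in the converse direction you assert ``for each $i$ these equations determine $\hat u_i=\hat C\hat p_i$ for one fixed matrix $\hat C$''; this is true, but the one-line justification you give only shows $\hat u_i$ is determined by $i$, not that the dependence on $\hat p_i$ is linear via a single $\hat C$. You should spell out: define $\hat C$ on a basis $\hat p_1,\dots,\hat p_{d+1}$ by $\hat C\hat p_k=\hat u_k$, and then for general $i$ write $\hat p_i=\sum c_k\hat p_k$ and use the flex equations together with the spanning of the $\hat q_j$ to get $\hat u_i=\sum c_k\hat u_k$. Second, your last subcase (``$\max\{n,m\}\le d-1$ and $n+m\le d+1$'') is slightly loose: the bound you should invoke is $nm<\binom{n+m}{2}$ (which holds since $\binom{n+m}{2}-nm=\binom{n}{2}+\binom{m}{2}>0$ for $n,m\ge2$), together with the standard fact that a graph on at most $d+1$ vertices is $d$-rigid only if it is complete. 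Writing ``$nm<\binom{d+1}{2}$, too few for $d$-rigidity'' obscures this, since for $|V|<d+1$ the threshold is $\binom{|V|}{2}$, not $d|V|-\binom{d+1}{2}$. With these cosmetic fixes the proof is complete and matches the approach in the literature the paper cites.
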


See also \cite[Proposition 1.6]{krivelevich2023rigid} for an alternative proof of the sufficiency part of Theorem \ref{thm:whiteley}.   
It was shown by Presenza, Mas, Giribet and Alvarez-Hamelin in \cite[Theorem 1]{presenza2022upper}, and independently by Lew, Nevo, Peled and Raz in \cite[Theorem 1.6]{LNPR23+}, that for every graph $G$ and every $d\geq 1$, $a_d(G)\leq a_1(G)$. For a complete bipartite graph, we obtain as a consequence $a_d(K_{n,m})\leq a_1(K_{n,m})=\min\{n,m\}$. As an application of Theorem \ref{thm:blow_up}, we obtain a lower bound on $a_d(K_{n,m})$, which matches this upper bound up to a multiplicative constant.

\begin{theorem}\label{thm:knm}
   Let $d\geq 1$. Then, there exists 
   $c_d>0$ such that, for all $n,m\geq d+1$ satisfying $n+m\ge \binom{d+2}{2}$,
   \[
        a_d(K_{n,m})\geq c_d \cdot \min\{n,m\}.
   \]
\end{theorem}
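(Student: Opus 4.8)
The strategy is to use Theorem \ref{thm:blow_up} to reduce the problem to a bounded-size base case. The key observation is that $K_{n,m}$ is a blow-up of a smaller complete bipartite graph: if $n\ge m$, write $n = qm + r$ with $0\le r < m$; then by grouping the $n$-side into $m$ parts of sizes within $\{q,q+1\}$ we realize $K_{n,m}$ as $K_{m,m}^{(a)}$ for an appropriate $a:V(K_{m,m})\to\mathbb{N}$ with $a\equiv 1$ on one side and $a\in\{q,q+1\}$ on the other. Alternatively — and this is cleaner for the constant $c_d$ — one reduces $K_{n,m}$ with $n\ge m \ge d+1$, $n+m\ge\binom{d+2}{2}$ to one of finitely many "minimal" complete bipartite graphs $K_{n_0,m_0}$ (those with $m_0 = d+1$ and $n_0 = \binom{d+2}{2}-(d+1)$, plus the finitely many with $d+1\le m_0\le n_0$ and $n_0+m_0 = \binom{d+2}{2}$, and a few with both sides equal to $d+1$), each of which is $d$-rigid by Theorem \ref{thm:whiteley}, and then blow up. The plan is: (i) fix a generic (or otherwise good) embedding $p$ of the base graph $K_{n_0,m_0}$ witnessing $a_d(K_{n_0,m_0}) =: \gamma_{n_0,m_0} > 0$; (ii) apply Theorem \ref{thm:blow_up} to the blow-up $(K_{n_0,m_0})^{(a)} = K_{n,m}$ to control $\lambda_{\binom{d+1}{2}+1}(L(K_{n,m}, p^{(a)}))$ from below; (iii) optimize the bound over the finitely many base graphs and over the choice of embedding to extract $c_d$.

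The heart of step (ii) is understanding how the eigenvalue $\lambda_{\binom{d+1}{2}+1}$ transforms under Theorem \ref{thm:blow_up}. The spectrum of $L(G^{(a)},p^{(a)})$ is the union of $\mathrm{Spec}(L_a^v(G,p))^{[a(v)-1]}$ over vertices $v$, together with $\mathrm{Spec}(L_a(G,p))$. Here $L_a(G,p)$ is a weighted stiffness matrix of the base graph with vertex-weights $a$, so its small eigenvalues are governed by the rigidity of $G$ with those weights; since $a(v)\ge 1$ for all $v$, the weighted framework is still infinitesimally rigid (the support of the rigidity matrix is unchanged and generic genericity of $p$ is preserved), hence $L_a(G,p)$ has exactly $\binom{d+1}{2}$ zero eigenvalues and its spectral gap is bounded below by a positive quantity depending continuously on the weights $a$. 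The contributions $L_a^v(G,p)$ are $d\times d$ positive semidefinite matrices: for a vertex $v$ whose neighbors span $\mathbb{R}^d$ (true for every vertex of $K_{n_0,m_0}$ once $\min\{n_0,m_0\}\ge d+1$ and the embedding is generic), $L_a^v(G,p)$ is positive definite, so it contributes no zero eigenvalues and its smallest eigenvalue is bounded below. Putting these together: the first $\binom{d+1}{2}$ eigenvalues of $L(K_{n,m},p^{(a)})$ are all zero (coming from $L_a(G,p)$), and the $(\binom{d+1}{2}+1)$-st smallest eigenvalue is at least $\min\{\lambda_{\binom{d+1}{2}+1}(L_a(G,p)),\ \min_v \lambda_1(L_a^v(G,p))\}$.

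The remaining work, step (iii), is to make this lower bound grow linearly in $m=\min\{n,m\}$. Both $\lambda_{\binom{d+1}{2}+1}(L_a(G,p))$ and $\lambda_1(L_a^v(G,p))$ scale linearly in the weights: if $a(v)\approx n/m_0$ on the heavy side and $a(v) = 1$ on the light side, then $L_a^v(G,p)$ for $v$ on the light side has all its neighbor-contributions scaled by $\sqrt{n/m_0}$, so $\lambda_1 \gtrsim (n/m_0)\cdot(\text{base quantity})$, which is $\gtrsim m$ since $n\ge m$; for $v$ on the heavy side the weights are $1$ and $\lambda_1(L^v)$ is bounded below by an absolute constant, which is still $\ge 1$; and the weighted global gap $\lambda_{\binom{d+1}{2}+1}(L_a(G,p))$ is likewise at least a constant times the total weight on each side, hence $\gtrsim m$. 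Choosing $p$ to simultaneously make all these base quantities positive — which is possible since a single generic embedding of $K_{n_0,m_0}$ does it, using Theorem \ref{thm:whiteley} to guarantee $d$-rigidity — and taking $c_d$ to be a small enough constant times the minimum of these finitely many positive base quantities, one gets $a_d(K_{n,m})\ge \lambda_{\binom{d+1}{2}+1}(L(K_{n,m},p^{(a)})) \ge c_d\cdot m$. The main obstacle I anticipate is the bookkeeping in step (iii): one must verify that the relevant base quantities (the weighted spectral gap and the $\lambda_1(L^v)$'s) genuinely have the claimed linear dependence on the weight vector $a$ — in particular that no delicate cancellation causes $\lambda_{\binom{d+1}{2}+1}(L_a(G,p))$ to stay bounded while the weights grow — and that the optimization over the (finitely many, but $d$-dependent) base graphs and embeddings is handled uniformly, so that $c_d$ is well-defined. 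It may be cleanest to avoid tracking the heavy side's exact weights and instead use the crudest blow-up $K_{m,m} = K_{n_0,m_0}^{(b)}$ in two stages, first blowing up to get balanced sides of size $m$ and then using the exact scaling in Corollary-type reasoning for the unbalanced remainder.
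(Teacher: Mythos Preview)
Your overall strategy matches the paper's: reduce to the finite family $S_d=\{(n_0,m_0): n_0,m_0\ge d+1,\ n_0+m_0=\binom{d+2}{2}\}$ of minimal $d$-rigid base graphs, realize $K_{n,m}$ as a blow-up $K_{n_0,m_0}^{(a)}$, apply Theorem~\ref{thm:blow_up}, and bound the spectral gap below by $\min\bigl\{\lambda_{\binom{d+1}{2}+1}(L_a(G,p)),\ \min_v \lambda_1(L_a^v(G,p))\bigr\}$. Steps (i) and (ii) are essentially what the paper does.

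Step (iii), however, has a real gap. With a bounded-size base $K_{n_0,m_0}$ you must blow up \emph{both} sides, taking $a\approx n/n_0$ on one and $a\approx m/m_0$ on the other; your assignment ``$a(v)=1$ on the light side'' produces $K_{n,m_0}$, not $K_{n,m}$. More critically, the scaling argument as written fails: for a vertex $v$ on the heavy side, $\lambda_1(L_a^v)$ is governed by the weights on $v$'s \emph{neighbors} (the other side), and it is precisely that factor $\approx m/m_0$ which supplies the linear growth. Saying ``$\lambda_1(L^v)$ is bounded below by an absolute constant, which is still $\ge 1$'' gives a bound of order $1$, not $\gtrsim m$, so the minimum over $v$ collapses to a constant and you do not get $c_d\cdot m$. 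The paper closes this via Lemma~\ref{lemma:eigenvalue_bound}: Ostrowski's inequality (Theorem~\ref{thm:ost} and Corollary~\ref{cor:ost}) gives $\lambda_1(L_a^v)\ge g(a)\,\lambda_1(L^v)$ with $g(a)=\min_w a(w)$, and $\lambda_{\binom{d+1}{2}+1}(L_a)\ge h(a)\,\lambda_{\binom{d+1}{2}+1}(L)$ with $h(a)=\min_{\{u,v\}\in E}a(u)a(v)/\max_w a(w)$. With both sides blown up one has $g(a),h(a)\gtrsim \min\{n/n_0,m/m_0\}\gtrsim \min\{n,m\}/\max\{n_0,m_0\}$, yielding the claim. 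Your two-stage fallback via $K_{m,m}$ cannot use Corollary~\ref{cor:blow_up} directly (the blow-up from $K_{n_0,m_0}$ to $K_{m,m}$ is not by a constant factor unless $n_0=m_0$), and the second stage still requires the same Ostrowski-type control; so it does not sidestep the issue.
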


In general, we don't have explicit bounds on the constant $c_d$. However, for small values of $d$, we prove the following explicit (but non-sharp) bounds.

\begin{theorem}\label{thm:knm_small_d}
Let $n,m> 10$. Then,
 \[
        a_2(K_{n,m})\geq 0.278 \cdot \min\{n,m\} - 2.78,
    \]
    and
    \[
        a_3(K_{n,m})\geq 0.0618 \cdot \min\{n,m\} - 0.618.
    \]    
\end{theorem}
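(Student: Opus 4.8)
The plan is to realise $K_{n,m}$ as a blow-up of the complete bipartite graph $H=K_{5,5}$ and apply Theorem~\ref{thm:blow_up} — the same strategy that presumably underlies Theorem~\ref{thm:knm}, now made explicit. Note $H$ is $d$-rigid for $d\in\{2,3\}$ by Theorem~\ref{thm:whiteley}, and with $|V(H)|=10$ it is this base graph that produces the ``$-10$'' in both bounds. Write $V(H)=A\cupdot B$ with $|A|=|B|=5$. Given $n,m>10$, assume $n\le m$, and pick $\alpha\colon V(H)\to\mathbb N$ whose values on $A$ lie in $\{\lfloor n/5\rfloor,\lceil n/5\rceil\}$ and sum to $n$, and whose values on $B$ lie in $\{\lfloor m/5\rfloor,\lceil m/5\rceil\}$ and sum to $m$. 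Then $H^{(\alpha)}=K_{n,m}$, every multiplicity is at least $2$, and for a suitable embedding $q\colon V(H)\to\Rea^d$ the blow-up $q^{(\alpha)}$ is a $d$-dimensional embedding of $K_{n,m}$; it thus suffices to bound $\lambda_{\binom{d+1}{2}+1}(L(K_{n,m},q^{(\alpha)}))$ from below.

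By Theorem~\ref{thm:blow_up} the spectrum of $L(K_{n,m},q^{(\alpha)})$ is assembled from the spectra of the $d\times d$ matrices $L_\alpha^v(H,q)$, $v\in V(H)$, and the $10d\times10d$ matrix $L_\alpha(H,q)$. If $q$ is chosen so that from every vertex the unit vectors to its neighbours span $\Rea^d$, then each $L^v(H,q)$ — hence each $L_\alpha^v(H,q)=\sum_{u\in N_H(v)}\alpha(u)\,d_{vu}d_{vu}^{\T}\succeq(\min_u\alpha(u))\,L^v(H,q)$ — is positive definite, and since $R_\alpha(H,q)=\widetilde D\,R(H,q)\,E$ for the invertible diagonal matrices $\widetilde D=\operatorname{diag}(\alpha(v)^{-1/2}I_d)_v$ and $E=\operatorname{diag}(\sqrt{\alpha(u)\alpha(w)})_{\{u,w\}\in E(H)}$, we get $\operatorname{rank}L_\alpha(H,q)=\operatorname{rank}L(H,q)=10d-\binom{d+1}{2}$ whenever $(H,q)$ is infinitesimally rigid. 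Thus $L_\alpha(H,q)$ carries exactly the $\binom{d+1}{2}$ zero eigenvalues of $L(K_{n,m},q^{(\alpha)})$, whence
\[
\lambda_{\binom{d+1}{2}+1}\bigl(L(K_{n,m},q^{(\alpha)})\bigr)=\min\Bigl\{\min_{v\in A}\lambda_1(L_\alpha^v(H,q)),\ \min_{v\in B}\lambda_1(L_\alpha^v(H,q)),\ \lambda_{\binom{d+1}{2}+1}(L_\alpha(H,q))\Bigr\}.
\]
Loewner comparisons then pull out the factor $\min\{n,m\}$: for $v\in A$, $\lambda_1(L_\alpha^v(H,q))\ge\lfloor n/5\rfloor\,\lambda_1(L^v(H,q))$ (using $\min_{u\in B}\alpha(u)=\lfloor m/5\rfloor\ge\lfloor n/5\rfloor$), and similarly for $v\in B$; while $L_\alpha(H,q)=\widetilde D\,R(H,q)\,E^2\,R(H,q)^{\T}\widetilde D\succeq\lfloor n/5\rfloor\lfloor m/5\rfloor\,\widetilde D\,L(H,q)\,\widetilde D$ (every edge joins $A$ to $B$), and estimating away $\widetilde D$ with the help of $m\ge n$ gives $\lambda_{\binom{d+1}{2}+1}(L_\alpha(H,q))\ge(\lfloor n/5\rfloor-1)\,\lambda_{\binom{d+1}{2}+1}(L(H,q))$. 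Setting $\kappa:=\min\{\min_{v\in A}\lambda_1(L^v(H,q)),\,\min_{v\in B}\lambda_1(L^v(H,q)),\,\lambda_{\binom{d+1}{2}+1}(L(H,q))\}$, this yields
\[
a_d(K_{n,m})\ \ge\ \bigl(\lfloor\min\{n,m\}/5\rfloor-1\bigr)\,\kappa\ \ge\ \tfrac{\kappa}{5}\bigl(\min\{n,m\}-10\bigr),
\]
and $\kappa>0$ automatically validates the two hypotheses on $q$ used above. So Theorem~\ref{thm:knm_small_d} follows once we exhibit an embedding $q$ of $K_{5,5}$ with $\kappa\ge1.39$ when $d=2$ (slope $1.39/5=0.278$) and one with $\kappa\ge0.309$ when $d=3$ (slope $0.309/5=0.0618$).

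This last step is the heart of the argument, and the difficulty is that the most symmetric configurations are non-generic: a planar regular decagon puts all ten vertices on a circle, and any embedding with an order-$5$ rotational symmetry puts them on two coaxial circles (concentric in the plane, horizontal in space), hence on a common conic resp. quadric surface — a position one should not expect to be infinitesimally rigid, which would force $\kappa=0$. I would therefore use a less symmetric configuration: for $d=2$, two interleaved concentric regular pentagons of \emph{unequal} radii (which breaks the common-conic degeneracy), with the radius ratio optimised; for $d=3$, an explicit configuration carrying only a small residual symmetry (say $\mathbb Z_2$ or $\mathbb Z_2\times\mathbb Z_2$), using it to block-diagonalise $L(H,q)$ and then bounding the eigenvalues of the small blocks and of the $L^v(H,q)$ — whose characteristic polynomials are explicit algebraic functions of the geometric parameters. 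The main obstacle is precisely this: choosing the parameters so that the minimum of all the relevant eigenvalues is as large as possible, and then certifying rigorously — not merely numerically — that no relevant characteristic polynomial has a root below the stated threshold. This is hardest for $d=3$, where there is little usable symmetry, the ambient stiffness matrix is $30\times30$ with algebraic-number entries, and the free parameters must be tuned so as to keep $\lambda_{\binom{d+1}{2}+1}(L(H,q))$ bounded away from $0$ while simultaneously keeping every $L^v(H,q)$ bounded away from being singular. Everything preceding this step — the blow-up identity, the kernel count, and the Loewner estimates — is routine once Theorem~\ref{thm:blow_up} is in hand.
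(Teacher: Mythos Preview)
Your reduction is exactly the paper's: realise $K_{n,m}$ as an $\alpha$-blow-up of $K_{5,5}$, invoke Theorem~\ref{thm:blow_up}, and use the Loewner/Ostrowski estimates (packaged in the paper as Lemma~\ref{lemma:eigenvalue_bound}) to pull out the factor $\lfloor\min\{n,m\}/5\rfloor-1$. Your arithmetic and the resulting inequality $a_d(K_{n,m})\ge\frac{\kappa}{5}(\min\{n,m\}-10)$ match the paper's line for line.

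Where you diverge is the final step. The paper does \emph{not} look for structured or symmetric embeddings, nor does it attempt any algebraic certification of the eigenvalue thresholds. It simply writes down ten explicit numerical points in $\Rea^2$ (respectively $\Rea^3$) --- evidently produced by some optimisation routine, with coordinates ranging over several orders of magnitude and no visible symmetry --- and asserts ``by computer calculation'' that $\kappa\ge 1.39$ (resp.\ $\kappa\ge 0.309$). That is the entire content of the step you flag as ``the heart of the argument''. Your instinct that highly symmetric configurations (vertices on a common conic or quadric) are dangerous is correct and is presumably why the paper's embeddings look so irregular; but your proposed route --- tune parameters of a semi-symmetric family and then bound characteristic-polynomial roots rigorously --- is considerably more work than what the paper actually does, and there is no guarantee that two concentric pentagons (or any small-symmetry $3$-dimensional family) would reach the specific thresholds $1.39$ and $0.309$. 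If you want to match the paper, drop the symmetry idea and accept a numerical search plus a floating-point (or interval-arithmetic) eigenvalue check on a single explicit $10$-point configuration.
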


In the special case $d=2$, $n=m=3$, we obtain the following lower bound, which we conjecture to be optimal.

\begin{theorem}\label{thm:k33_tight}
    Let $\lambda\approx 0.6903845$ be the unique positive real root of the polynomial $176 x^4-200 x^3+47 x^2+18 x-9$. Then,
    \[
        a_2(K_{3,3})\ge 2(1-\lambda)\approx 0.6192309.
    \]
\end{theorem}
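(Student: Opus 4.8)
The plan is to exhibit an explicit embedding $p^{\ast}\colon V(K_{3,3})\to\Rea^2$ and to show that $\lambda_4\!\left(L(K_{3,3},p^{\ast})\right)=2(1-\lambda)$; since $\binom{2+1}{2}=3$, this fourth eigenvalue is precisely the spectral gap, so $a_2(K_{3,3})\ge 2(1-\lambda)$ follows at once. (The blow-up machinery of Theorem~\ref{thm:blow_up} does not help here directly: the natural way to realise $K_{3,3}$ as a blow-up, namely $K_{3,3}=K_2^{(3)}$, collapses the embedding onto a line, and by Theorem~\ref{thm:blow_up} any such degenerate embedding of $K_{3,3}$ has at least four zero eigenvalues, hence spectral gap $0$.)

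For the embedding I would use the automorphism group of $K_{3,3}$ together with the fact that $L$ is invariant under rescaling of $p$ (the entries $d_{uv}$ are normalised), and work inside the symmetric family in which the two colour classes $A=\{a_0,a_1,a_2\}$ and $B=\{b_0,b_1,b_2\}$ are placed at concentric equilateral triangles, $p(a_j)=\bigl(\cos\tfrac{2\pi j}{3},\sin\tfrac{2\pi j}{3}\bigr)$ and $p(b_j)=t\bigl(\cos(\tfrac{2\pi j}{3}+\psi),\sin(\tfrac{2\pi j}{3}+\psi)\bigr)$, with a scale ratio $t>0$ and relative angle $\psi$; reflection symmetry suggests $\psi\in\{0,\pi/3\}$, leaving the single parameter $t$, and one needs $t\ne 1$ so that the six points do not lie on a common circle (otherwise, by Bolker--Roth \cite{BR80}, the framework is not infinitesimally rigid). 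This configuration carries a dihedral symmetry that commutes with $L(K_{3,3},p^{\ast})$.

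The next step is the standard symmetry reduction: decomposing the $12$-dimensional displacement space into isotypic components block-diagonalises $L(K_{3,3},p^{\ast})$ into a handful of blocks of size at most $4$, with entries explicit in $t$, and the three trivial zero modes (two translations and one infinitesimal rotation) are confined to prescribed blocks and can be removed. Reading off the surviving eigenvalues expresses the nonzero spectrum of $L(K_{3,3},p^{\ast})$ through the roots of a few quadratics and cubics in $t$, and the spectral gap $g(t):=\lambda_4$ is the pointwise minimum of these branches. Finally I would maximise $g$ over $t>0$: the maximum is attained where the two lowest branches cross (or at a stationary point of the lowest one), and eliminating $t$ from that condition yields, after the substitution $x=1-g(t)/2$, the polynomial $176x^4-200x^3+47x^2+18x-9$; the branch in force at the optimum selects its unique positive real root $\lambda\approx 0.6903845$, so that $g(t^{\ast})=2(1-\lambda)$, as claimed.

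The main obstacle will be the bookkeeping in the middle: choosing coordinates so the block-diagonalisation is transparent, computing the nine direction vectors $d_{a_jb_k}$ and the blocks of $L=R(K_{3,3},p^{\ast})R(K_{3,3},p^{\ast})^{\T}$, and --- most delicately --- tracking which of the competing eigenvalue branches actually realises the spectral gap over the whole range of $t$, since the minimising branch may switch and the optimum of $g$ could sit at such a crossover rather than at a smooth critical point; then the resultant computation must be carried through so as to land exactly on the stated quartic and to pin down the correct root. One should also verify separately that the optimal $t$ gives an infinitesimally rigid framework (a rotation-invariant conic through the six points would have to be a circle centred at the origin, which is impossible for $t\ne 1$), so that $g(t^{\ast})>0$ and the value is genuinely attained.
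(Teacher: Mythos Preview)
Your plan diverges from the paper's, and its central claim---that optimising the spectral gap over the one-parameter $D_3$-symmetric concentric-triangle family lands exactly on the quartic $176x^4-200x^3+47x^2+18x-9$---is asserted with no supporting computation (``eliminating $t$ \dots\ yields'' is doing all the work). There is good reason to doubt it: the paper's construction uses a genuinely \emph{two-parameter} family with only a single reflection symmetry, and the stated quartic arises precisely from imposing two independent crossing conditions, something a single parameter $t$ cannot generically accommodate.

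Concretely, the paper takes $p_{\alpha,\beta,c}$ with $p(1)=c(\cos\alpha,0)$, $p(3),p(5)=c(0,\pm\sin\alpha)$, and $p(2),p(4),p(6)$ displaced from $p(1),p(3),p(5)$ by the unit vectors $(1,0)$, $(\cos\beta,\pm\sin\beta)$; vertex~$1$ is distinguished throughout, so the symmetry is only $\ZZ_2$. It then sends $c\to\infty$ (so the bound is approached, not attained by any single embedding) and analyses the limiting \emph{lower} stiffness matrix $L^{\downarrow}_{\alpha,\beta}\in\Rea^{9\times 9}$. With $a=\sin\alpha$, $b=\sin\beta$, an explicit (ad hoc, not representation-theoretic) change of basis yields the eigenvalues $2(1-a^2)$, $1+2a^2$, $2$, $3$, together with the roots of a quadratic $p_{1,a,b}$ and a cubic $p_{2,a,b}$. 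The quartic in the theorem then comes from solving the \emph{pair} of equations $p_{1,a,b}(2(1-a^2))=p_{2,a,b}(2(1-a^2))=0$ for $(a,b)$, giving $a_0\approx 0.8309$ and $b_0\approx 0.3146$. The paper even computes, separately, a $D_3$-symmetric embedding---exactly your family at $\psi=\pi/3$, $t=\tfrac12$ (outer triangle plus edge-midpoints)---and finds spectral gap only $\tfrac12<2(1-\lambda)$. That does not by itself rule out a better $t$, but you have produced no evidence that your restricted family reaches $2(1-\lambda)$, and the structure of the paper's optimum suggests it will not.
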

Theorem \ref{thm:k33_tight} follows by analyzing the limiting behavior of certain sequences of embeddings of the vertex set of $K_{3,3}$ in the plane (see Section \ref{sec:k33} for more details).

\begin{remark}
The application of graph blow-ups to the study of the $d$-dimensional algebraic connectivity of graphs was initiated by Jord\'an and Tanigawa in \cite{JT22}, and further developed by Lew, Nevo, Peled and Raz in \cite{LNPR2023onthe,LNPR23+}. In contrast to the ad-hoc arguments utilized in both \cite{JT22} and \cite{LNPR2023onthe,LNPR23+}, Theorem \ref{thm:blow_up} provides us with a general method for computing the stiffness eigenvalues of framework blow-ups. We expect this method to be useful in the future in the study of the high dimensional algebraic connectivity of other families of graphs as well.
\end{remark}

The paper is organized as follows. Section \ref{sec:prelims} contains  background material on matrix eigenvalues and some preliminary results on ``lower stiffness matrices" of frameworks. In Section \ref{sec:main_proof} we present the proofs of our main result, Theorem \ref{thm:blow_up}, and its Corollary \ref{cor:blow_up}. In Section \ref{sec:bounds} we obtain, as a consequence of Theorem \ref{thm:blow_up}, a lower bound on the $d$-dimensional algebraic connectivity of the $a$-blow-up of a graph (Lemma \ref{lemma:eigenvalue_bound}), which we will later use. In Section \ref{sec:completebipartitegraphs} we prove Theorems \ref{thm:knm}, \ref{thm:knm_small_d} and \ref{thm:k33_tight}, dealing with the $d$-dimensional algebraic connectivity of complete bipartite graphs. In Section \ref{sec:star} we present another application of Theorem \ref{thm:blow_up}, to the study of ``generalized star graphs". In particular, we obtain a new new, simpler proof of a result from \cite{LNPR23+} regarding the $d$-dimensional algebraic connectivity of these graphs. We close with some open problems and concluding remarks in Section \ref{sec:concluding_remarks}.

\begin{remark}[Notation and terminology.] Let $G=(V,E)$ be a graph. Occasionally, we will denote the vertex set of $G$ by $V(G)$ and its edge set by $E(G)$. 
For $n\in \mathbb{N}$, we denote by $[n]$ the set $\{1,2,\ldots,n\}$ and by $\binom{[n]}{2}$ the set $\{\{i,j\}:\, 1\le i<j\le n\}$. We denote the $n\times n$ identity matrix by $I_n$.
\end{remark}

\section{Preliminaries}\label{sec:prelims}

\subsection{Eigenvalues of symmetric matrices}

Let $M\in \Rea^{n\times n}$ be a symmetric matrix. For $1\le i\le n$, we denote by $\lambda_i(M)$ the $i$-th smallest eigenvalue of $M$. 
We will need the following well-known lemma (see, for example, \cite[Chapter 9]{marshall2011inequalities}).

\begin{lemma}\label{lemma:AB_BA_eigenvalues}
    Let $n, m \ge 1$, and let $A\in \Rea^{n\times m}$. Then, for all $1\le i\le m$,
    \[
        \lambda_i(A^{\T} A)= \begin{cases} 0 & \text{if } m\ge n \text{ and } 1\le i\le m-n,\\
        \lambda_{i+n-m}(A A^{\T}) & \text{otherwise.}
        \end{cases}
    \]
\end{lemma}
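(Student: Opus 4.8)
The plan is to compare the nonzero eigenvalues of $A^\top A \in \Rea^{m\times m}$ with those of $AA^\top\in\Rea^{n\times n}$, using the fact that these two matrices share the same nonzero spectrum (with multiplicities), and then carefully track where the zero eigenvalues sit in the ordered lists. First I would recall the standard fact: if $v$ is an eigenvector of $A^\top A$ with $A^\top A v=\mu v$ and $\mu\ne 0$, then $Av\ne 0$ and $AA^\top(Av)=\mu(Av)$, so $Av$ is an eigenvector of $AA^\top$ with the same eigenvalue; conversely $A^\top$ sends nonzero eigenvectors of $AA^\top$ to nonzero eigenvectors of $A^\top A$. These maps are inverse to each other up to scaling on the respective nonzero eigenspaces, so they give a bijection between the nonzero eigenvalues of $A^\top A$ and those of $AA^\top$, preserving multiplicities. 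Denote by $r=\rank(A)=\rank(A^\top A)=\rank(AA^\top)$ this common number of nonzero eigenvalues (counted with multiplicity).

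Next I would set up the bookkeeping. The matrix $A^\top A$ has exactly $m-r$ zero eigenvalues, occupying positions $1,\dots,m-r$ in increasing order, and its $r$ nonzero eigenvalues occupy positions $m-r+1,\dots,m$. Similarly $AA^\top$ has $n-r$ zero eigenvalues in positions $1,\dots,n-r$ and its $r$ nonzero eigenvalues in positions $n-r+1,\dots,n$. Since the nonzero parts agree as ordered lists, for the nonzero range we get $\lambda_i(A^\top A)=\lambda_{i-(m-r)+(n-r)}(AA^\top)=\lambda_{i+n-m}(AA^\top)$ whenever $m-r+1\le i\le m$, i.e. whenever $i$ indexes a nonzero eigenvalue of $A^\top A$. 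It remains to handle small $i$. Note $r\le \min(m,n)$, so $m-r\ge m-n$; hence if $m\ge n$ and $1\le i\le m-n$, then $i\le m-n\le m-r$, so $\lambda_i(A^\top A)=0$, which is the first case of the statement. For the remaining indices $i$ with $i>m-n$ (when $m\ge n$) or all $i$ (when $m< n$), I would check that the identity $\lambda_i(A^\top A)=\lambda_{i+n-m}(AA^\top)$ holds, splitting into the subcase where $i\le m-r$ (both sides are $0$: the left because $i\le m-r$, the right because $i+n-m\le n-r$, using $i\le m-r$) and the subcase $i\ge m-r+1$ (handled above via the bijection on nonzero eigenvalues).

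The only slightly delicate point — and the main thing to be careful about rather than a genuine obstacle — is making sure the zero-eigenvalue counts line up so that the shift $i\mapsto i+n-m$ is exactly right at the boundary between the zero block and the nonzero block; this reduces to the inequality $r\le\min(m,n)$ together with checking that $i\le m-r\iff i+n-m\le n-r$, which is immediate. Since the result is entirely standard and all the ingredients are elementary linear algebra, I would keep the write-up short, essentially citing \cite[Chapter 9]{marshall2011inequalities} for the shared-nonzero-spectrum fact and spelling out only the index arithmetic above.
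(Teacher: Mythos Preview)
Your proposal is correct. Note, however, that the paper does not actually give a proof of this lemma: it is stated as a well-known fact with a reference to \cite[Chapter 9]{marshall2011inequalities}, followed by a one-sentence informal restatement (that $A^{\T}A$ and $AA^{\T}$ share their nonzero spectra, and the zero-multiplicities differ by $m-n$). Your argument is exactly a fleshed-out version of that remark, so there is nothing to compare---you are supplying the details the paper chose to omit.
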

In other words, $A^{\T}A$ has the same non-zero eigenvalues as $AA^{\T}$ (with the same multiplicities), and the difference between the multiplicity of $0$ as an eigenvalue of $A^{\T}A$ and its multiplicity as an eigenvalue of $A A^{\T}$ is exactly $m-n$.

We will also need the following theorem of Ostrowski and its ensuing corollary.

\begin{theorem}[Ostrowski {\cite[Theorem 1]{ostrowski1959quantitative}}]\label{thm:ost}
  Let $H\in \Rea^{n\times n}$ be a symmetric matrix, and let $S\in \Rea^{n\times n}$ be a non-singular matrix. Then, for all $1\leq i\leq n$,
  \[
      \lambda_1(S S^{\T}) \cdot \lambda_i(H)\leq  \lambda_i(S H S^{\T})\leq \lambda_n(S S^{\T}) \cdot \lambda_i(H).
  \]
\end{theorem}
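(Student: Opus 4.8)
The natural tool here is the Courant--Fischer min--max characterization of eigenvalues. Recall that for a symmetric matrix $M\in\Rea^{n\times n}$ and $1\le i\le n$,
\[
\lambda_i(M)=\min_{\substack{U\subseteq\Rea^n\\ \dim U=i}}\ \max_{\substack{x\in U\\ x\ne 0}}\frac{x^{\T}Mx}{x^{\T}x}.
\]
My plan is to apply this to $M=SHS^{\T}$ and to factor the Rayleigh quotient so that the contribution of $S$ decouples from that of $H$. Since $S$ is non-singular, $SS^{\T}$ is positive definite and $x^{\T}SS^{\T}x>0$ for every $x\ne 0$, so I may write
\[
\frac{x^{\T}SHS^{\T}x}{x^{\T}x}=\frac{x^{\T}SHS^{\T}x}{x^{\T}SS^{\T}x}\cdot\frac{x^{\T}SS^{\T}x}{x^{\T}x}.
\]
The second factor is the Rayleigh quotient of $SS^{\T}$ at $x$, hence lies in the interval $[\lambda_1(SS^{\T}),\lambda_n(SS^{\T})]$. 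Setting $y=S^{\T}x$, the first factor equals $y^{\T}Hy/(y^{\T}y)$, the Rayleigh quotient of $H$ at $y$.

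The key step is then to pull the uniform bounds on the $SS^{\T}$-factor out of the min and the max. Using that the $H$-Rayleigh quotient is nonnegative (as $H$ is positive semidefinite in the intended setting; note that nonnegativity of $\lambda_i(H)$ is exactly what makes the two-sided inequality consistent as written), I get, for every $x\ne 0$,
\[
\lambda_1(SS^{\T})\cdot\frac{y^{\T}Hy}{y^{\T}y}\le \frac{x^{\T}SHS^{\T}x}{x^{\T}x}\le \lambda_n(SS^{\T})\cdot\frac{y^{\T}Hy}{y^{\T}y}.
\]
Since $\lambda_1(SS^{\T})$ and $\lambda_n(SS^{\T})$ are positive constants, taking $\max$ over $x\in U$ and then $\min$ over $i$-dimensional subspaces $U$ preserves both inequalities.

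It remains to recognize the resulting extremal quantity as $\lambda_i(H)$. Here I use that the map $x\mapsto S^{\T}x$ is a linear isomorphism of $\Rea^n$: as $x$ ranges over a subspace $U$, the image $y=S^{\T}x$ ranges over $W=S^{\T}U$, which again has dimension $i$, and as $U$ ranges over all $i$-dimensional subspaces so does $W$. Consequently
\[
\min_{\dim U=i}\ \max_{x\in U,\,x\ne0}\frac{(S^{\T}x)^{\T}H(S^{\T}x)}{(S^{\T}x)^{\T}(S^{\T}x)}=\min_{\dim W=i}\ \max_{y\in W,\,y\ne0}\frac{y^{\T}Hy}{y^{\T}y}=\lambda_i(H),
\]
by Courant--Fischer applied to $H$. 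Combining this with the previous step yields $\lambda_1(SS^{\T})\lambda_i(H)\le\lambda_i(SHS^{\T})\le\lambda_n(SS^{\T})\lambda_i(H)$, as claimed.

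The only genuine obstacle is the coupling between the two Rayleigh quotients: they are evaluated at the linked points $x$ and $y=S^{\T}x$, so one cannot optimize them independently. The factorization circumvents this by replacing the $SS^{\T}$-factor with its uniform scalar bounds $\lambda_1(SS^{\T})$ and $\lambda_n(SS^{\T})$ \emph{before} taking the extrema, leaving only the $H$-factor to be optimized, which the substitution then handles cleanly. The accompanying subtlety is the sign of the $H$-Rayleigh quotient, which governs the direction of the inequality when the $SS^{\T}$-bounds are pulled out; the bounds hold as written precisely when the relevant eigenvalues of $H$ are nonnegative, which is the case in all applications of this theorem in the present paper.
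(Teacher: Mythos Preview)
The paper does not supply its own proof of this theorem; it is quoted from Ostrowski's 1959 paper and used as a black box. Your Courant--Fischer argument is a clean, self-contained derivation and is correct under the extra hypothesis you flag, namely that $H$ is positive semidefinite. The factorization of the Rayleigh quotient and the change of variables $y=S^{\T}x$ are exactly the right moves, and the subspace bijection $U\mapsto S^{\T}U$ handles the min--max cleanly.

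You are also right about the sign issue: as literally written, the two-sided inequality only makes sense when $\lambda_i(H)\ge 0$, since otherwise $\lambda_1(SS^{\T})\lambda_i(H)\ge\lambda_n(SS^{\T})\lambda_i(H)$. Ostrowski's original statement avoids this by asserting that $\lambda_i(SHS^{\T})=\theta_i\,\lambda_i(H)$ for some $\theta_i\in[\lambda_1(SS^{\T}),\lambda_n(SS^{\T})]$, which is valid for arbitrary symmetric $H$ and specializes to the displayed inequalities when $\lambda_i(H)\ge 0$. Every invocation of this theorem in the paper (in Corollary~\ref{cor:ost} and Lemmas~\ref{lemma:scaling1} and~\ref{lemma:scaling2}) is to matrices of the form $AA^{\T}$ or $A^{\T}A$, which are positive semidefinite, so your restricted version is all that is needed here.
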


\begin{corollary}\label{cor:ost}
    Let $n, m \ge 1$ be integers. Let $A\in \Rea^{n\times m}$, and let $S\in \Rea^{n\times  n}$ be a non-singular matrix. Then, for $1\le i\le m$,
    \[
    \lambda_1(S S^{\T})\cdot  \lambda_i(A^{\T} A) \le \lambda_i(A^{\T} S^{\T} S A)\le \lambda_n(S S^{\T})\cdot  \lambda_i(A^{\T} A).
    \]
\end{corollary}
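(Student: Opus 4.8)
The plan is to deduce Corollary~\ref{cor:ost} from Ostrowski's theorem (Theorem~\ref{thm:ost}) by moving from the $m\times m$ matrices $A^\T A$ and $A^\T S^\T S A$ to the $n\times n$ matrices $AA^\T$ and $S(AA^\T)S^\T$, and transporting eigenvalues between these two settings via Lemma~\ref{lemma:AB_BA_eigenvalues}.

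First I would set $B = SA \in \Rea^{n\times m}$, so that $A^\T S^\T S A = B^\T B$ while $BB^\T = S(AA^\T)S^\T$. Fix an index $i$ with $1\le i\le m$. If $m\ge n$ and $i\le m-n$, then applying Lemma~\ref{lemma:AB_BA_eigenvalues} to $A$ and to $B$ (both of which lie in $\Rea^{n\times m}$) yields $\lambda_i(A^\T A)=0$ and $\lambda_i(B^\T B)=0$; hence every term in the asserted chain of inequalities is zero and there is nothing to prove.

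In the remaining case, set $j = i+n-m$. Lemma~\ref{lemma:AB_BA_eigenvalues} gives $1\le j\le n$, together with $\lambda_i(A^\T A)=\lambda_j(AA^\T)$ and $\lambda_i(B^\T B)=\lambda_j(BB^\T)=\lambda_j\bigl(S(AA^\T)S^\T\bigr)$. Now I would invoke Theorem~\ref{thm:ost} with the symmetric matrix $H=AA^\T$ and the non-singular matrix $S$, at index $j$, to get
\[
\lambda_1(SS^\T)\cdot\lambda_j(AA^\T)\ \le\ \lambda_j\bigl(S(AA^\T)S^\T\bigr)\ \le\ \lambda_n(SS^\T)\cdot\lambda_j(AA^\T).
\]
Rewriting the three quantities using the identifications above turns this into precisely the claimed inequality for the index $i$, completing the proof.

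The whole argument is essentially bookkeeping, and I do not expect a genuine obstacle. The only point requiring care is the case distinction inherited from Lemma~\ref{lemma:AB_BA_eigenvalues} --- namely the index shift by $n-m$ and the degenerate range $i\le m-n$ when $m\ge n$. It is worth noting that the zero-eigenvalue case needs no rank comparison between $A$ and $SA$: since both matrices have the same shape $n\times m$, Lemma~\ref{lemma:AB_BA_eigenvalues} forces the corresponding small eigenvalues of $A^\T A$ and of $(SA)^\T(SA)$ to vanish simultaneously.
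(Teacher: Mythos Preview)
Your proof is correct and follows essentially the same approach as the paper: split into the degenerate case $m\ge n$, $i\le m-n$ (where Lemma~\ref{lemma:AB_BA_eigenvalues} makes both sides zero) and the main case, then shift indices via Lemma~\ref{lemma:AB_BA_eigenvalues} and apply Ostrowski's theorem to $H=AA^\T$. The only cosmetic difference is that you name $B=SA$ explicitly, whereas the paper works directly with $A^\T S^\T S A$ and $S A A^\T S^\T$.
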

\begin{proof}
    Let $1\le i\le m$. If $m\ge n$ and $1\le i\le m-n$, then, by Lemma \ref{lemma:AB_BA_eigenvalues},
\[
     \lambda_i( A^{\T} S^{\T} S A)=0,
\]
and
\[
    \lambda_i(A^{\T} A)= 0.
\]
Therefore, the claim holds trivially in this case. Otherwise, by Lemma \ref{lemma:AB_BA_eigenvalues}, we have
    \[
        \lambda_i( A^{\T} S^{\T} S A)= \lambda_{i+n-m}(S A A^{\T} S^{\T}),
    \]
    and
    \[
    \lambda_i(A^{\T} A)= \lambda_{i+n-m}(A A^{\T}).
    \]
    Hence, applying Theorem \ref{thm:ost} to the matrix $H=A A^{\T}$, we obtain
   \[
    \lambda_1(S S^{\T})\cdot  \lambda_i(A^{\T} A) \le \lambda_i(A^{\T} S^{\T} S A)\le \lambda_n(S S^{\T})\cdot  \lambda_i(A^{\T} A),
   \]
   as wanted.
\end{proof}

\subsection{Lower stiffness matrices}
Let $G=(V,E)$ be a graph, and let $p: V\to \Rea^d$. The lower stiffness matrix $\lminusva{}{}(G,p)\in \Rea^{|E|\times |E|}$ is defined as 
\[
\lminusva{}{}(G,p) = R(G,p)^{\T}R(G,p).
\]
Similarly to the stiffness matrix, the lower stiffness matrix $\lminusva{}{}(G,p)$ is positive semi-definite, and satisfies $\rank(\lminusva{}{}(G,p)) = \rank(R(G,p))$. Note that, by Lemma \ref{lemma:AB_BA_eigenvalues}, the non-zero eigenvalues of $\lminusva{}{}(G,p)$, with multiplicities, coincide with those of $L(G,p)$  (see Lemma \ref{lemma:lower_stiffness_spectrum} below).

For $f: V \to \Rea_{>0}$, we define
\[\lminusva{}{f}(G,p)=R_f(G,p)^{\T} R_f(G,p)\in \Rea^{|E|\times |E|}\]
and, for $v\in V$, 
\[
\lminusva{v}{f}(G,p) = R^v_f(G,p)^{\T} R^v_f(G,p)\in \Rea^{\deg_G(v)\times \deg_G(v)}.
\]
If $f\equiv 1$, we denote $\lminusva{v}{f}(G,p)=\lminusva{v}{}(G,p)$.

The following lemmas give explicit descriptions of the entries of the matrices  $\lminusva{}{f}(G,p)$ and $\lminusva{v}{f}(G,p)$, respectively. For $e_1 = \{u,v\}$ and $e_2 = \{u,w\}$, we denote by $\theta_p(e_1, e_2)$ the angle between the vectors $d_{uv}$ and $d_{uw}$. For convenience, if one of the vectors $d_{uv}$ or $d_{uw}$ is the zero vector, we define $\cos(\theta_p(e_1,e_2))=0$.

\begin{lemma}\label{lemma:lower_weighted_stiffness}
Let $G=(V,E)$ be a graph, $p:V\to \Rea^d$ and $f:V\to \Rea_{>0}$. Let $e,e'\in E$. Then,
    \[
    L_f^{\downarrow}(G,p)_{e,e'}=\begin{cases}
                        f(u)+f(v) & \text{if } e=e'=\{u,v\} \text{ and } p(u) \ne p(v),\\
                        \sqrt{f(v) f(w)}\cos(\theta_p(e,e')) & \text{if } e=\{u,v\}, e'=\{u,w\},\\
                        0 & \text{otherwise.}
                    \end{cases}
\]
\end{lemma}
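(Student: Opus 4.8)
The plan is to read off the entries of $L_f^{\downarrow}(G,p)=R_f(G,p)^{\T}R_f(G,p)$ directly from the definition of $R_f(G,p)$, since $L_f^{\downarrow}(G,p)_{e,e'}$ is exactly the standard inner product of the columns of $R_f(G,p)$ indexed by $e$ and by $e'$. The first step is to pin down the support of a single column: by definition, the column indexed by $e=\{u,v\}$ has entry $\sqrt{f(v)}\,(d_{uv})_i$ in row $(u,i)$ and entry $\sqrt{f(u)}\,(d_{vu})_i$ in row $(v,i)$ for every $i\in[d]$, and $0$ in all other rows. Thus this column is supported on the $2d$ rows indexed by pairs $(w,i)$ with $w\in e$, and its restriction to the block of rows $\{(u,i):i\in[d]\}$ is the vector $\sqrt{f(v)}\,d_{uv}$, while its restriction to the block $\{(v,i):i\in[d]\}$ is $\sqrt{f(u)}\,d_{vu}$.

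Next I would split into three cases according to $|e\cap e'|$, noting first that since $e,e'$ are $2$-element sets, $|e\cap e'|=2$ forces $e=e'$, so the cases are exhaustive and disjoint. If $e=e'=\{u,v\}$, the inner product of the column with itself is $\sum_{i\in[d]} f(v)(d_{uv})_i^2+\sum_{i\in[d]} f(u)(d_{vu})_i^2=f(v)\|d_{uv}\|^2+f(u)\|d_{vu}\|^2$; when $p(u)\ne p(v)$ both $d_{uv}$ and $d_{vu}$ are unit vectors and this equals $f(u)+f(v)$, while when $p(u)=p(v)$ both are the zero vector and the entry is $0$, consistent with the ``otherwise'' branch of the statement. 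If $|e\cap e'|=1$, write $e=\{u,v\}$, $e'=\{u,w\}$ with $v\ne w$ (which is automatic since $e\ne e'$); the two columns share nonzero rows only in the block $\{(u,i):i\in[d]\}$, so the inner product is $\sum_{i\in[d]}\sqrt{f(v)}\,(d_{uv})_i\,\sqrt{f(w)}\,(d_{uw})_i=\sqrt{f(v)f(w)}\,\langle d_{uv},d_{uw}\rangle$. Since each of $d_{uv},d_{uw}$ has norm $0$ or $1$, we have $\langle d_{uv},d_{uw}\rangle=\cos(\theta_p(e,e'))$ when both are unit vectors, and $\langle d_{uv},d_{uw}\rangle=0$ otherwise, which agrees with the convention $\cos(\theta_p(e,e'))=0$ in that degenerate situation; in all sub-cases the entry equals $\sqrt{f(v)f(w)}\cos(\theta_p(e,e'))$. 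Finally, if $e\cap e'=\emptyset$, the supports of the two columns are disjoint and the inner product is $0$.

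I do not expect a genuine obstacle: the statement is a bookkeeping computation rather than a theorem with real content. The only points needing a little care are checking that the three cases above genuinely partition all pairs $(e,e')$, and verifying that the degenerate configurations ($p(u)=p(v)$, or one of $d_{uv},d_{uw}$ vanishing) are handled consistently with the paper's conventions on $d_{uv}$ and on $\cos(\theta_p(\cdot,\cdot))$ — both of which were set up precisely so that the clean formula in the statement remains valid without extra hypotheses.
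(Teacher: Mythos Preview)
Your proposal is correct and follows exactly the same approach as the paper, which simply states that the lemma ``follows from direct computation, using the fact that $d_{uv}\cdot d_{uw}=\cos\theta$'' without spelling out the case analysis. Your write-up is in fact considerably more careful than the paper's, in particular in verifying that the degenerate sub-cases (where $p(u)=p(v)$ or one of $d_{uv},d_{uw}$ vanishes) are absorbed into the ``otherwise'' clause or handled by the convention $\cos(\theta_p(e,e'))=0$.
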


\begin{lemma}\label{lemma:lower_local_stiffness}
Let $G=(V,E)$ be a graph, $p:V\to \Rea^d$ and $f:V\to \Rea_{>0}$. Let $v\in V$, and $u,w\in N_G(v)$. Then,
    \[
    L_f^{v \downarrow}(G,p)_{u,w}=\begin{cases}
                        f(u) & \text{if } u=w \text{ and } p(u) \ne p(v),\\
                        0 &\text{if } u=w \text{ and } p(u)=p(v),\\
                        \sqrt{f(u) f(w)}\cos(\theta_p(\{v,u\},\{v,w\})) & \text{otherwise.}
                    \end{cases}
\]
\end{lemma}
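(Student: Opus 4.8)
The plan is to prove Lemma~\ref{lemma:lower_local_stiffness} by unwinding the definitions and computing the relevant Gram matrix entries directly; no auxiliary results beyond the definitions of $R^v_f(G,p)$ and $d_{vu}$ are needed.

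First I would record the key observation that, by the definition of $R^v_f(G,p)$, for each neighbor $u\in N_G(v)$ the column of $R^v_f(G,p)$ indexed by $u$ is precisely the vector $\sqrt{f(u)}\,d_{vu}\in\Rea^d$. Since $L_f^{v\downarrow}(G,p)=R^v_f(G,p)^{\T}R^v_f(G,p)$ is the Gram matrix of the columns of $R^v_f(G,p)$, it follows that for all $u,w\in N_G(v)$,
\[
L_f^{v\downarrow}(G,p)_{u,w}=\bigl\langle \sqrt{f(u)}\,d_{vu},\ \sqrt{f(w)}\,d_{vw}\bigr\rangle=\sqrt{f(u)f(w)}\,\langle d_{vu},d_{vw}\rangle,
\]
where $\langle\cdot,\cdot\rangle$ denotes the standard inner product on $\Rea^d$.

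Next I would evaluate $\langle d_{vu},d_{vw}\rangle$ case by case. If $u=w$, then $\langle d_{vu},d_{vu}\rangle=\|d_{vu}\|^2$, which equals $1$ when $p(u)\ne p(v)$ (as $d_{vu}$ is then a unit vector) and equals $0$ when $p(u)=p(v)$ (as $d_{vu}=0$ by definition); substituting back into the displayed identity gives $f(u)$ and $0$ respectively, matching the first two cases of the lemma. If $u\ne w$, I would distinguish the subcase where both $d_{vu}$ and $d_{vw}$ are nonzero, in which $\langle d_{vu},d_{vw}\rangle=\|d_{vu}\|\,\|d_{vw}\|\cos(\theta_p(\{v,u\},\{v,w\}))=\cos(\theta_p(\{v,u\},\{v,w\}))$ since both are unit vectors, from the subcase where at least one of them is the zero vector, in which $\langle d_{vu},d_{vw}\rangle=0=\cos(\theta_p(\{v,u\},\{v,w\}))$ by the stated convention; in either subcase the displayed identity yields $\sqrt{f(u)f(w)}\cos(\theta_p(\{v,u\},\{v,w\}))$, which is the third case of the lemma.

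Since the argument is a direct computation, there is no real obstacle here; the only point requiring care is keeping the degenerate configuration $p(u)=p(v)$ consistent with the conventions $d_{vu}=0$ and $\cos(\theta_p)=0$, so that the diagonal and off-diagonal formulas hold uniformly even when a framework edge has coincident endpoints. The same strategy --- identifying the matrix in question as the Gram matrix of the columns of the appropriate weighted rigidity matrix and reading off inner products --- also proves Lemma~\ref{lemma:lower_weighted_stiffness}, the only new feature there being that two distinct edges $e,e'\in E$ can share at most one vertex, which is exactly what produces the two off-diagonal cases (shared vertex $u$ giving $\sqrt{f(v)f(w)}\cos(\theta_p(e,e'))$, no shared vertex giving $0$).
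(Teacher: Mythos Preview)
Your proof is correct and follows the same approach as the paper, which simply states that the lemma ``follows from direct computation, using the fact that $d_{uv}\cdot d_{uw} = \cos\theta$''. Your version is more detailed---in particular you handle the degenerate cases explicitly---but the underlying idea is identical.
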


\begin{proof}[Proof of Lemmas \ref{lemma:lower_weighted_stiffness} and \ref{lemma:lower_local_stiffness}]
Follows from direct computation, using the fact that $d_{uv}\cdot d_{uw} = \cos\theta$, where $\theta$ is the angle between $d_{uv}$ and $d_{uw}$.
\end{proof}

\begin{remark}
The unweighted case of Lemma \ref{lemma:lower_weighted_stiffness} was first stated and proved in \cite[Lemma 2.1]{LNPR2023onthe}.
\end{remark}

The following Lemma follows immediately from Lemma \ref{lemma:lower_weighted_stiffness} (applied to the constant function $f\equiv 1$) and the definition of an eigenvector.

\begin{lemma}[See {\cite[Lemma 4.5]{LNPR2023onthe}}]\label{lemma:lowerlaplacian}
Let $G=(V,E)$ be a graph and let $p:V\to \Rea^d$ such that $p(u)\ne p(v)$ for all $\{u,v\}\in E$. Then, a vector
$\psi\in \Rea^{|E|}$ is an eigenvector of $\lminusva{}{}(G,p)$ with eigenvalue $\lambda$ if and only if, for all $e\in E$,
\[
(\lambda - 2)\psi_e = \sum_{\substack{ e'\in E:\\ |e\cap e'| = 1}}\cos(\theta_p(e,e'))\psi_{e'}.
\]
\end{lemma}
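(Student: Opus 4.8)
The plan is to unwind the eigenvector condition $\lminusva{}{}(G,p)\psi=\lambda\psi$ coordinate by coordinate, reading off the matrix entries directly from Lemma \ref{lemma:lower_weighted_stiffness}. First I would note that $\lminusva{}{}(G,p)=R(G,p)^{\T}R(G,p)$ is exactly $\lminusva{}{f}(G,p)$ for the constant function $f\equiv 1$, so its entries are described by Lemma \ref{lemma:lower_weighted_stiffness} with every factor $\sqrt{f(\cdot)}$ equal to $1$. Since we assume $p(u)\ne p(v)$ for every edge $\{u,v\}\in E$, the diagonal falls under the first case and equals $f(u)+f(v)=2$ for each $e=\{u,v\}$, never the degenerate value $0$. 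The only nonzero off-diagonal entries are $\lminusva{}{}(G,p)_{e,e'}=\cos(\theta_p(e,e'))$ for edges $e,e'$ sharing exactly one vertex, and all remaining entries vanish.

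Next I would expand the $e$-th coordinate of $\lminusva{}{}(G,p)\psi=\lambda\psi$, which by definition characterizes $\psi$ as an eigenvector with eigenvalue $\lambda$:
\[
\sum_{e'\in E}\lminusva{}{}(G,p)_{e,e'}\,\psi_{e'}=\lambda\psi_e.
\]
I would then separate the diagonal term $e'=e$ from the rest. Because $G$ is simple, two edges satisfy $|e\cap e'|=2$ precisely when $e=e'$, so every off-diagonal index has $|e\cap e'|\in\{0,1\}$; the terms with $|e\cap e'|=0$ contribute $0$, leaving only the terms with $|e\cap e'|=1$, each carrying the factor $\cos(\theta_p(e,e'))$. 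This turns the displayed equation into
\[
2\psi_e+\sum_{\substack{e'\in E:\\ |e\cap e'|=1}}\cos(\theta_p(e,e'))\,\psi_{e'}=\lambda\psi_e.
\]

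Rearranging gives the stated identity $(\lambda-2)\psi_e=\sum_{|e\cap e'|=1}\cos(\theta_p(e,e'))\psi_{e'}$. Since each manipulation is an equivalence and the argument is carried out for every $e\in E$, the equation $\lminusva{}{}(G,p)\psi=\lambda\psi$ holds if and only if all these identities hold, which establishes both directions of the "if and only if" at once. There is no substantive obstacle here: the computation is routine, and the only points requiring care are the bookkeeping of which edge pairs $e'$ contribute (namely that the sum over $|e\cap e'|=1$ automatically excludes $e'=e$, and that disjoint edges drop out) and the use of the hypothesis $p(u)\ne p(v)$ on edges to fix the diagonal uniformly at $2$.
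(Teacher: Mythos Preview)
Your proof is correct and matches the paper's approach exactly: the paper states that the lemma ``follows immediately from Lemma~\ref{lemma:lower_weighted_stiffness} (applied to the constant function $f\equiv 1$) and the definition of an eigenvector,'' which is precisely the computation you carry out.
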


As a direct application of Lemma \ref{lemma:AB_BA_eigenvalues}, we obtain the following relation between the eigenvalues of the stiffness and lower stiffness matrices.

\begin{lemma}\label{lemma:lower_stiffness_spectrum}
Let $G=(V,E)$ be a graph, and let $p:V\to \Rea^d$ and $f:V\to \Rea_{>0}$. Then, for all $1\le i\le |E|$,
    \[
        \lambda_i(\lminusva{}{f}(G,p))= \begin{cases} 0 & \text{if } |E|\ge d|V| \text{ and } 1\le i\le |E|-d|V|,\\
        \lambda_{i+d|V|-|E|}(L_f(G,p)) & \text{otherwise,}
        \end{cases}
    \]
    and, for all $v\in V$,
    \[
        \lambda_i(\lminusva{v}{f}(G,p))= \begin{cases} 0 & \text{if } \deg_G(v)\ge d \text{ and } 1\le i\le \deg_G(v)-d,\\
        \lambda_{i+d-\deg_G(v)}(L^v_f(G,p)) & \text{otherwise.}
        \end{cases}
    \]
\end{lemma}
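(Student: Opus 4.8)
The plan is to derive both identities as immediate consequences of Lemma \ref{lemma:AB_BA_eigenvalues}. For the first identity, set $A=R_f(G,p)\in\Rea^{d|V|\times|E|}$, so that $\lminusva{}{f}(G,p)=A^\T A\in\Rea^{|E|\times|E|}$ and $L_f(G,p)=AA^\T\in\Rea^{d|V|\times d|V|}$. Apply Lemma \ref{lemma:AB_BA_eigenvalues} with the roles $n=d|V|$, $m=|E|$: for $1\le i\le |E|$, we get $\lambda_i(A^\T A)=0$ when $m\ge n$ and $1\le i\le m-n$, i.e.\ when $|E|\ge d|V|$ and $1\le i\le |E|-d|V|$, and otherwise $\lambda_i(A^\T A)=\lambda_{i+n-m}(AA^\T)=\lambda_{i+d|V|-|E|}(L_f(G,p))$. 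This is exactly the first displayed formula.

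For the second identity, I would apply the same lemma with $A=R^v_f(G,p)\in\Rea^{d\times\deg_G(v)}$, so that $\lminusva{v}{f}(G,p)=A^\T A\in\Rea^{\deg_G(v)\times\deg_G(v)}$ and $L^v_f(G,p)=AA^\T\in\Rea^{d\times d}$. Now the roles are $n=d$, $m=\deg_G(v)$; Lemma \ref{lemma:AB_BA_eigenvalues} gives, for $1\le i\le\deg_G(v)$, that $\lambda_i(A^\T A)=0$ when $\deg_G(v)\ge d$ and $1\le i\le\deg_G(v)-d$, and otherwise $\lambda_i(A^\T A)=\lambda_{i+d-\deg_G(v)}(L^v_f(G,p))$, which is the second displayed formula.

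There is essentially no obstacle here: the statement is a direct restatement of Lemma \ref{lemma:AB_BA_eigenvalues} applied to two specific matrices, and the only thing to be careful about is matching the index shift $n-m$ correctly in each of the two cases (it is $d|V|-|E|$ in the first and $d-\deg_G(v)$ in the second). Thus the proof is a one-line invocation of Lemma \ref{lemma:AB_BA_eigenvalues} twice, once for $R_f(G,p)$ and once for $R^v_f(G,p)$.

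\begin{proof}
Apply Lemma \ref{lemma:AB_BA_eigenvalues} with $A=R_f(G,p)\in\Rea^{d|V|\times|E|}$, noting that $\lminusva{}{f}(G,p)=A^\T A$ and $L_f(G,p)=AA^\T$; with $n=d|V|$ and $m=|E|$ this yields the first formula. Similarly, for $v\in V$, apply Lemma \ref{lemma:AB_BA_eigenvalues} with $A=R^v_f(G,p)\in\Rea^{d\times\deg_G(v)}$, noting that $\lminusva{v}{f}(G,p)=A^\T A$ and $L^v_f(G,p)=AA^\T$; with $n=d$ and $m=\deg_G(v)$ this yields the second formula.
\end{proof}
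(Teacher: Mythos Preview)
Your proof is correct and is essentially identical to the paper's own proof: both simply apply Lemma~\ref{lemma:AB_BA_eigenvalues} twice, once with $A=R_f(G,p)$ (so $n=d|V|$, $m=|E|$) and once with $A=R^v_f(G,p)$ (so $n=d$, $m=\deg_G(v)$).
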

\begin{proof}
    Since 
    \[
        \lminusva{}{f}(G,p) = R_f(G,p)^{\T}R_f(G,p) \in \Rea^{|E| \times |E|}
    \]
    and 
    \[
        L_f(G,p) = R_f(G,p)R_f(G,p)^{\T} \in \Rea^{d|V| \times d|V|},
    \]
    the first statement follows directly from Lemma \ref{lemma:AB_BA_eigenvalues}. 
    Similarly, since, for every $v\in V$, 
    \[
        \lminusva{v}{f}(G,p) = R^v_f(G,p)^{\T}R^v_f(G,p) \in \Rea^{\deg_G(v) \times \deg_G(v)}
    \]
    and 
    \[
        L^v_f(G,p) = R^v_f(G,p)R^v_f(G,p)^{\T} \in \Rea^{d \times d},
    \]
    the second statement also follows directly from Lemma \ref{lemma:AB_BA_eigenvalues}.
\end{proof}

Finally, let us state the following analog of Theorem \ref{thm:blow_up} for lower stiffness matrices.

\begin{theorem}\label{thm:lower_stiffness_blow_up}
  Let $G=(V,E)$ be a graph, $p:V\to \Rea^d$ and $a:V\to\mathbb{N}$.
    Then,
    \[
        \text{Spec}\left(\lminusva{}{}(\blowup{G}{a},\blowup{p}{a})\right) = \left(\bigcup_{v\in V} \text{Spec}(\lminusva{v}{a}(G,p))^{[a(v)-1]}\right)\cup \text{Spec}(\lminusva{}{a}(G,p))\cup \{0\}^{[N]},
    \]
    where 
    \[
    N = \sum_{\{u,v\}\in E} (a(u)-1)(a(v)-1).
    \]
\end{theorem}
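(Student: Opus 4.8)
## Proof plan for Theorem \ref{thm:lower_stiffness_blow_up}

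The plan is to deduce the lower-stiffness statement from the stiffness statement of Theorem \ref{thm:blow_up} by a bookkeeping argument on multiplicities of the zero eigenvalue, exactly as Corollary \ref{cor:ost} was deduced from Theorem \ref{thm:ost}. Concretely, set $G' = G^{(a)}$, $p' = p^{(a)}$, and write $n = |V|$, $m = |E|$, $N_a = \sum_{v\in V} a(v)$ for the number of vertices of $G'$, and $M_a = \sum_{\{u,v\}\in E} a(u)a(v)$ for the number of edges of $G'$. The matrix $\lminusva{}{}(G',p')$ lives in $\Rea^{M_a\times M_a}$ while $L(G',p')$ lives in $\Rea^{dN_a\times dN_a}$, and by Lemma \ref{lemma:AB_BA_eigenvalues} (applied to $A = R(G',p')$) the two matrices have the same nonzero eigenvalues with multiplicity, while the multiplicity of $0$ as an eigenvalue of $\lminusva{}{}(G',p')$ exceeds its multiplicity as an eigenvalue of $L(G',p')$ by exactly $M_a - dN_a$.

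So I would first apply Theorem \ref{thm:blow_up} to get
\[
\text{Spec}\bigl(L(G',p')\bigr) = \Bigl(\bigcup_{v\in V}\text{Spec}(L^v_a(G,p))^{[a(v)-1]}\Bigr)\cup \text{Spec}(L_a(G,p)),
\]
then strip the zero eigenvalues off each piece and re-pad at the end. Precisely: for each $v$, Lemma \ref{lemma:lower_stiffness_spectrum} (or directly Lemma \ref{lemma:AB_BA_eigenvalues} applied to $R^v_a(G,p)\in\Rea^{d\times\deg_G(v)}$) says $\text{Spec}(\lminusva{v}{a}(G,p))$ and $\text{Spec}(L^v_a(G,p))$ agree on nonzero eigenvalues, with the zero-multiplicity of the former larger by $\deg_G(v)-d$; similarly $\text{Spec}(\lminusva{}{a}(G,p))$ and $\text{Spec}(L_a(G,p))$ agree on nonzero eigenvalues with the zero-multiplicity of the former larger by $m - dn$. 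Therefore the multiset on the right-hand side of the claimed formula has exactly the same nonzero eigenvalues (with multiplicity) as $L(G',p')$, hence as $\lminusva{}{}(G',p')$; what remains is to check that the total number of zeros matches, i.e. that
\[
\Bigl(\sum_{v\in V}(a(v)-1)(\deg_G(v)-d)\Bigr) + (m-dn) + N
\]
equals the zero-multiplicity of $\lminusva{}{}(G',p')$, which is $\bigl(\text{zero-mult of }L(G',p')\bigr) + (M_a - dN_a)$, and in turn the zero-multiplicity of $L(G',p')$ is $\sum_{v\in V}(a(v)-1)\cdot(\text{zero-mult of }L^v_a(G,p)) + (\text{zero-mult of }L_a(G,p))$.

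The only real content is this last counting identity, and it reduces to a clean algebraic check. Expanding the zero-multiplicity of $L(G',p')$ via the displayed decomposition, the matched nonzero parts and the matched $L^v$/$\lminusva{v}{}$ zero-discrepancies $\deg_G(v)-d$ and the $L_a$/$\lminusva{}{a}$ discrepancy $m-dn$ cancel against the corresponding terms on the $\lminusva{}{}$ side, and one is left needing
\[
N = M_a - dN_a - \Bigl[(m-dn) + \sum_{v\in V}(a(v)-1)(\deg_G(v)-d)\Bigr].
\]
Now $dN_a = d\sum_v a(v)$ and $dn = d\sum_v 1$, so the $d$-terms collect to $-d\sum_v a(v) + d\cdot n - d\sum_v(a(v)-1)\cdot(-1)\cdot(-1)$... more carefully: $-dN_a + dn - \sum_v(a(v)-1)(-d) = -d\sum_v a(v) + d\sum_v 1 + d\sum_v(a(v)-1) = 0$. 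Thus all $d$-dependence disappears (as it must, since $N$ has none), and the identity collapses to $N = M_a - m - \sum_{v\in V}(a(v)-1)\deg_G(v)$. Using $M_a = \sum_{\{u,v\}\in E}a(u)a(v)$, $m = \sum_{\{u,v\}\in E}1$, and $\sum_v(a(v)-1)\deg_G(v) = \sum_{\{u,v\}\in E}\bigl[(a(u)-1)+(a(v)-1)\bigr]$, the right side is $\sum_{\{u,v\}\in E}\bigl[a(u)a(v) - 1 - (a(u)-1) - (a(v)-1)\bigr] = \sum_{\{u,v\}\in E}(a(u)-1)(a(v)-1) = N$, as desired.

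I do not anticipate a genuine obstacle here: once Theorem \ref{thm:blow_up} is in hand, everything is forced by Lemma \ref{lemma:AB_BA_eigenvalues}, and the main (minor) care needed is to track the zero-eigenvalue multiplicities on all four families of matrices consistently and to carry out the final identity $N = \sum_{\{u,v\}\in E}(a(u)-1)(a(v)-1)$ without sign errors. If one prefers, the whole statement can alternatively be proved directly by mimicking the proof of Theorem \ref{thm:blow_up} at the level of lower stiffness matrices (decomposing $\Rea^{E^{(a)}}$ into the analogous invariant subspaces), but the reduction above is shorter and reuses the main theorem as a black box.
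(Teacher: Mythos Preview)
Your proposal is correct and follows essentially the same route as the paper: apply Theorem~\ref{thm:blow_up}, pass between $L$ and $L^{\downarrow}$ on each piece via Lemma~\ref{lemma:lower_stiffness_spectrum} (equivalently Lemma~\ref{lemma:AB_BA_eigenvalues}), and reduce everything to the combinatorial identity $N = M_a - dN_a - (m-dn) - \sum_v (a(v)-1)(\deg_G(v)-d)$, which the paper writes as $N_1 - N_2 - N_3 = N$ and verifies by the same edge-sum expansion you give. The only cosmetic difference is that the paper organizes the bookkeeping via the notation $\Spec_{\ne 0}$ and $\mult_0$ rather than speaking of ``stripping and re-padding'' zeros, but the computation is identical.
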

Theorem \ref{thm:lower_stiffness_blow_up} follows easily from Theorem \ref{thm:blow_up} and Lemma \ref{lemma:lower_stiffness_spectrum}, as detailed next.
\begin{proof}[Proof of Theorem \ref{thm:lower_stiffness_blow_up}]
    For a symmetric matrix $M$, let $\Spec_{\ne 0}(M)$ be the multiset consisting of the nonzero eigenvalues of $M$, and let $\mult_0(M)$ be the multiplicity of $0$ as an eigenvalue of $M$ (in other words, the dimension of the kernel of $M$). 
    By Theorem \ref{thm:blow_up}, we have that
    \[
    \text{Spec}\left(L(\blowup{G}{a},\blowup{p}{a})\right)= \left(\bigcup_{v\in V} \text{Spec}(L_a^v(G,p))^{[a(v)-1]}\right)\cup \text{Spec}(L_a(G,p)).
    \]
    Hence, we have 
    \begin{equation}\label{eq:non_zero_spec}
    \Spec_{\ne 0}\left(L(\blowup{G}{a},\blowup{p}{a})\right)= \left(\bigcup_{v\in V} \Spec_{\ne 0}(L_a^v(G,p))^{[a(v)-1]}\right)\cup \Spec_{\ne 0}(L_a(G,p))
    \end{equation}
    and
    \begin{equation}\label{eq:mult0}
    \mult_0\left(L(\blowup{G}{a},\blowup{p}{a})\right)= \left(\sum_{v\in V} \mult_0(L_a^v(G,p))(a(v)-1)\right) + \mult_0(L_a(G,p)).
    \end{equation}
    It follows from Lemma \ref{lemma:lower_stiffness_spectrum} that 
    \[
    \Spec_{\ne 0}\left(\lminusva{}{}(\blowup{G}{a},\blowup{p}{a})\right)=\Spec_{\ne 0}\left(L(\blowup{G}{a},\blowup{p}{a})\right),
    \]
    \[
    \Spec_{\ne 0}(\lminusva{}{a}(G,p)) = \Spec_{\ne 0}(L_a(G,p)),
    \]
    and, for all $v\in V$,
     \[
    \Spec_{\ne 0}(\lminusva{v}{a}(G,p)) = \Spec_{\ne 0}(L_a^v(G,p)).
    \]
    Similarly, again by Lemma \ref{lemma:lower_stiffness_spectrum},
    \[
    \mult_0\left(\lminusva{}{}(\blowup{G}{a},\blowup{p}{a})\right)=\mult_0\left(L(\blowup{G}{a},\blowup{p}{a})\right) + |\blowup{E}{a}|-d|
    \blowup{V}{a}|,
    \]
    \[
    \mult_0(\lminusva{}{a}(G,p)) = \mult_0(L_a(G,p)) + |E|-d|V|,
    \]
    and, for all $v\in V$, 
     \[
    \mult_0(\lminusva{v}{a}(G,p)) = \mult_0(L_a^v(G,p)) + \deg_G(v)-d.
    \]
Hence, by \eqref{eq:non_zero_spec}, we have
\[
\Spec_{\ne 0}\left(\lminusva{}{}(\blowup{G}{a},\blowup{p}{a})\right)= \left(\bigcup_{v\in V} \Spec_{\ne 0}(\lminusva{v}{a}(G,p))^{[a(v)-1]}\right)\cup \Spec_{\ne 0}(\lminusva{}{a}(G,p)).
\]  
Moreover, by \eqref{eq:mult0},
    \[
    \mult_0\left(\lminusva{}{}(\blowup{G}{a},\blowup{p}{a})\right) - N_1 = \left(\sum_{v\in V} \mult_0(\lminusva{v}{a}(G,p))(a(v)-1)\right)+ \mult_0(\lminusva{}{a}(G,p)) - N_2 - N_3, 
    \]
    where $N_1 = |\blowup{E}{a}|-d|
    \blowup{V}{a}|$, $N_2 = \sum_{v\in V}(\deg_G(v)-d)(a(v)-1)$, and $N_3 = |E|-d|V|$. Therefore, it is sufficient to show $N_1-N_2-N_3 = N$. Notice that by double counting, we get 
    \begin{align*}
        N_2 &= \sum_{v\in V}(\deg_G(v)-d)(a(v)-1)\\
        &= \sum_{v\in V}a(v)\deg_G(v) - d\sum_{v\in V}a(v) - \sum_{v\in V}\deg_G(v) + \sum_{v\in V}d\\
        &= \sum_{\{u,v\}\in E}(a(u)+a(v)) - d|\blowup{V}{a}| - 2|E| + d|V|\\
        &= \sum_{\{u,v\}\in E}(a(u)+a(v)-1) - d|\blowup{V}{a}| - |E| + d|V|.
    \end{align*}
    Hence, 
    \begin{align*}
        N_1-N_2-N_3 &= |\blowup{E}{a}| - \sum_{\{u,v\}\in E}(a(u)+a(v)-1)\\
        &= \sum_{\{u,v\}\in E}a(u)a(v) - \sum_{\{u,v\}\in E}(a(u)+a(v)-1)\\
        &= \sum_{\{u,v\}\in E}(a(u)a(v)-a(v)-a(u)+1)\\
        &= \sum_{\{u,v\}\in E}(a(u)-1)(a(v)-1) = N,
    \end{align*}
    as required.
\end{proof}

\section{Spectra of stiffness matrices of graph blow-ups}\label{sec:main_proof}

In this section we present the proofs of Theorem \ref{thm:blow_up} and Corollary \ref{cor:blow_up}. For convenience, for a vertex set $V$,  we identify the vectors in $\Rea^{d|V|}$ with functions from $V$ to $\Rea^d$. Namely, for $\phi\in \Rea^{d|V|}$ and $u\in V$, we denote by $\phi(u)\in \Rea^d$ the vector consisting of the $d$ coordinates of $\phi$ indexed by the vertex $u$.

Let $U$ be a finite vector space, and let $M:U\to U$ be a linear operator. For a subspace $U'\subset U$, denote by $M\big|_{U'}$ the restriction of $M$ to $U'$. If $U'$ is an invariant subspace of $M$, we consider $M\big|_{U'}$ as an operator from $U'$ to itself.

For a graph $G=(V,E)$ and $u,v\in V$, we write $u\sim v$ if $\{u,v\}\in E$, and $u\not\sim v$ otherwise. 
We will need the following Lemma.

\begin{lemma}
    \label{lemma:L_formula}
    Let $G=(V,E)$ be a graph, and let $p:V\to\Rea^d$ and $f:V\to \Rea_{>0}$. Then, for all $\phi\in \Rea^{d|V|}$ and $u\in V$,
    \[
        L_f(G,p)\phi(u)=\sum_{v\sim u} d_{uv} d_{uv}^{\T} \left(f(v) \phi(u)- \sqrt{f(v)f(u)}\phi(v)\right).
    \]
\end{lemma}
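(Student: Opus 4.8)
The plan is to unwind the definitions $L_f(G,p) = R_f(G,p)R_f(G,p)^{\T}$ and compute the action on a vector $\phi \in \Rea^{d|V|}$ block by block, where the blocks are indexed by vertices. First I would recall that $R_f(G,p)$ has rows indexed by pairs $(u,i)$ with $u\in V$, $i\in[d]$, and columns indexed by edges $e\in E$, with $R_f(G,p)_{(u,i),e} = \sqrt{f(v)}(d_{uv})_i$ when $e = \{u,v\}$ and $0$ otherwise. Grouping the $d$ rows indexed by a fixed $u$, the $u$-th block-row of $R_f(G,p)$ is the $d\times|E|$ matrix whose column indexed by $e=\{u,v\}$ (for $v\sim u$) equals $\sqrt{f(v)}\,d_{uv}$ and whose other columns vanish.

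Next I would write $\big(L_f(G,p)\phi\big)(u) = \sum_{e\in E} \big(R_f(G,p)\big)_{u\text{-block},\,e}\,\big(R_f(G,p)^{\T}\phi\big)_e$, so the key is to identify the scalar $\big(R_f(G,p)^{\T}\phi\big)_e$ for each edge $e$. For $e = \{u',v'\}$, the $e$-th row of $R_f(G,p)^{\T}$ is supported on the blocks indexed by $u'$ and $v'$, contributing $\sqrt{f(v')}\,d_{u'v'}^{\T}\phi(u') + \sqrt{f(u')}\,d_{v'u'}^{\T}\phi(v')$. Using $d_{v'u'} = -d_{u'v'}$, this equals $\sqrt{f(v')}\,d_{u'v'}^{\T}\phi(u') - \sqrt{f(u')}\,d_{u'v'}^{\T}\phi(v')$. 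Then only edges $e=\{u,v\}$ incident to $u$ contribute to the $u$-block of $L_f(G,p)\phi$, and for each such edge the contribution is
\[
\sqrt{f(v)}\,d_{uv}\Big(\sqrt{f(v)}\,d_{uv}^{\T}\phi(u) - \sqrt{f(u)}\,d_{uv}^{\T}\phi(v)\Big) = d_{uv}d_{uv}^{\T}\Big(f(v)\phi(u) - \sqrt{f(u)f(v)}\,\phi(v)\Big).
\]
Summing over $v\sim u$ gives the claimed formula.

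This is essentially a bookkeeping computation, so there is no serious obstacle; the one point requiring a little care is the sign convention: when expanding $R_f(G,p)^{\T}\phi$ one must remember that the edge $e=\{u,v\}$ receives a contribution from \emph{both} endpoint blocks, with the direction vectors $d_{uv}$ and $d_{vu}=-d_{uv}$ pointing oppositely, which is exactly what produces the $\sqrt{f(v)}\phi(u) - \sqrt{f(u)}\phi(v)$ structure (after factoring out the scalar $\sqrt{f(v)}$ from the $u$-block entry and distributing $d_{uv}d_{uv}^{\T}$). I would also note that the formula is insensitive to whether $p(u)=p(v)$ for some edge: in that degenerate case $d_{uv}=0$ and that term simply drops out on both sides.
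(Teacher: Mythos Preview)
Your proposal is correct and follows essentially the same two-step computation as the paper: first evaluate $(R_f^{\T}\phi)_e = d_{uv}^{\T}\bigl(\sqrt{f(v)}\phi(u)-\sqrt{f(u)}\phi(v)\bigr)$ for each edge $e=\{u,v\}$, then apply $R_f$ and restrict to the $u$-block to obtain the sum over $v\sim u$. Your additional remarks on the sign convention and the degenerate case $p(u)=p(v)$ are sound but not needed beyond what the paper records.
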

\begin{proof}
    Let $R_f=R_f(G,p)$ and $L_f=L_f(G,p)$. 
    By the definition of $R_f$, we have, for $e=\{u,v\}\in E$,
    \[
       (R_f^{\T} \phi)_e= d_{uv}^{\T} \left(\sqrt{f(v)}\phi(u)-\sqrt{f(u)}\phi(v)\right).
    \]
    Then, again by the definition of $R_f$, we obtain for all $u\in V$,
    \[
    L_f \phi(u)= (R_f R_f^{\T}\phi)(u)= \sum_{v\sim u} \sqrt{f(v)} d_{uv}(R_f^{\T}\phi)_{\{u,v\}}
    =\sum_{v\sim u} d_{uv} d_{uv}^{\T} \left(f(v) \phi(u)- \sqrt{f(v)f(u)}\phi(v)\right),
    \]
    as wanted.
\end{proof}

\begin{proof}[Proof of Theorem \ref{thm:blow_up}]

We define a linear transformation $T: \Rea^{d|V|}\to \Rea^{d |\blowup{V}{a}|}$ by
\[
    T\phi((u,i))= a(u)^{-1/2}\phi(u)
\]
for every $\phi\in \Rea^{d|V|}$ and $(u,i)\in \blowup{V}{a}$. 
Similarly, for every $v\in V$ with $a(v)\geq 2$ and every $2\leq k\leq a(v)$, define $T_{v,k}:\Rea^d\to \Rea^{d|\blowup{V}{a}|}$ by
\[
    T_{v,k}x ((u,i))= \begin{cases}
         x & \text{if } u=v,\, i=1,\\

        -x & \text{if } u=v,\, i=k,\\
        0 & \text{otherwise,}
    \end{cases}
\]
for every $x\in \Rea^d$ and $(u,i)\in \blowup{V}{a}$.

\begin{claim}\label{claim:1}
\[
    \Rea^{d|\blowup{V}{a}|} = \left(\bigoplus_{\substack{v\in V,\\ 2\leq k\leq a(v)}} \Ima T_{v,k}\right)\bigoplus \Ima T.
    \]
\end{claim}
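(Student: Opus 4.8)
The plan is to show that the listed subspaces are pairwise independent and that their dimensions sum to $d|\blowup{V}{a}|$; together these give the claimed direct-sum decomposition. First I would record the dimensions: each $T_{v,k}$ is injective (its image contains, in coordinate $(v,1)$, an arbitrary vector $x$), so $\dim \Ima T_{v,k} = d$, and there are $\sum_{v\in V}(a(v)-1)$ such maps; likewise $T$ is injective (from $T\phi((u,1))=a(u)^{-1/2}\phi(u)$ one recovers $\phi$), so $\dim \Ima T = d|V|$. Hence the total dimension is $d\sum_{v}(a(v)-1) + d|V| = d\sum_v a(v) = d|\blowup{V}{a}|$, which matches $\dim \Rea^{d|\blowup{V}{a}|}$ exactly. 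So it suffices to prove that the sum of all these subspaces is direct.

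Next I would verify directness by a concrete coordinate argument. Suppose $\sum_{v,k} T_{v,k}x_{v,k} + T\phi = 0$ for some vectors $x_{v,k}\in\Rea^d$ and $\phi\in\Rea^{d|V|}$. Evaluate the left-hand side at a vertex $(u,i)\in\blowup{V}{a}$. The term $T\phi$ contributes $a(u)^{-1/2}\phi(u)$ regardless of $i$; the term $T_{u,k}x_{u,k}$ contributes $x_{u,k}$ when $i=1$, contributes $-x_{u,i}$ when $i=k$ (i.e.\ for the single index $k=i$, when $i\ge 2$), and $0$ otherwise; terms $T_{v,k}$ with $v\ne u$ contribute $0$. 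Thus for $i\ge 2$ we get $a(u)^{-1/2}\phi(u) - x_{u,i} = 0$, and for $i=1$ we get $a(u)^{-1/2}\phi(u) + \sum_{k=2}^{a(u)} x_{u,k} = 0$. The first family of equations forces $x_{u,i} = a(u)^{-1/2}\phi(u)$ for every $u$ and every $2\le i\le a(u)$; substituting into the $i=1$ equation gives $a(u)^{-1/2}\phi(u) + (a(u)-1)a(u)^{-1/2}\phi(u) = a(u)^{1/2}\phi(u) = 0$, hence $\phi(u)=0$ for all $u$, and then $x_{u,i}=0$ for all $u,i$. This shows the only way to write $0$ as such a sum is trivially, i.e.\ the sum is direct.

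Combining the two parts: the subspaces $\Ima T_{v,k}$ (over $v\in V$, $2\le k\le a(v)$) and $\Ima T$ are independent, and the sum of their dimensions equals $\dim\Rea^{d|\blowup{V}{a}|}$, so their direct sum is the whole space, as claimed. I do not expect any real obstacle here — the statement is essentially linear algebra, and the map $T$ together with the "difference" maps $T_{v,k}$ is built precisely so that, within each blow-up class of $v$, the vector $T\phi$ spans the "diagonal" direction and the $T_{v,k}$ span a complement. If one prefers a basis-free phrasing, the same conclusion follows by noting that for each fixed $v$ the restriction of $\Rea^{d\cdot a(v)}$ (the coordinates indexed by the copies of $v$) decomposes as the diagonal copy of $\Rea^d$ plus the $\{x : \sum_i x_i = 0\}$-type complement of dimension $d(a(v)-1)$, and that $\Ima T$ picks out exactly the diagonals (scaled) while $\bigoplus_k \Ima T_{v,k}$ sweeps out the complements; but I would present the explicit coordinate computation above, since it is short and self-contained.
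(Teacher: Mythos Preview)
Your proposal is correct and follows essentially the same approach as the paper: both arguments verify injectivity of $T$ and the $T_{v,k}$, count dimensions to get $d|\blowup{V}{a}|$, and then check that the sum is direct. The only minor difference is tactical --- the paper establishes directness by observing that $\Ima T\perp \Ima T_{v,k}$ and that each $\Ima T_{v,k}$ has the coordinate block at $(v,k)$ unique to its support, whereas you do a single explicit coordinate computation showing that a vanishing linear combination forces all summands to vanish; both are short and equally valid.
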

\begin{proof}

    First, it is easy to check that $\Ima T \perp \Ima T_{v,k}$ for all $v\in V$ and $2\leq k\leq a(v)$. Moreover, note that the sum of the subspaces $\Ima T_{v,k}$, for $v\in V$ and $2\leq k\leq a(v)$, is a direct sum, since for every $v\in V$ and $2\leq k\leq a(v)$ there is a coordinate in $\Rea^{d|\blowup{V}{a}|}$ that is unique to the support of the vectors in $\Ima T_{v,k}$ (in fact, the $d$ coordinates associated with the vertex $(v,k)$ are unique to the support of vectors in $\Ima T_{v,k}$). 
    Therefore, we have a direct sum 
    \[
    \left(\bigoplus_{\substack{v\in V,\\ 2\leq k\leq a(v)}} \Ima T_{v,k}\right)\bigoplus \Ima T\subset \Rea^{d|\blowup{V}{a}|}.
    \]

    Finally, note that $T$ and $T_{v,k}$, for $v\in V$ and $2\leq k\leq a(v)$, are injective linear transformations, and therefore $\dim(\Ima T)= d|V|$ and $\dim(\Ima T_{v,k})=d$. Hence,
    \begin{align*}
        \dim\left (\left(\bigoplus_{\substack{v\in V,\\ 2\leq k\leq a(v)}} \Ima T_{v,k}\right)\bigoplus \Ima T\right)
        &= d|V|+\sum_{v\in V}(a(v)-1)d
        \\
        &= \left(\sum_{v\in V} a(v)\right)d= d|\blowup{V}{a}|= \dim(\Rea^{d|\blowup{V}{a}|}).
    \end{align*}
    So $\left(\bigoplus_{\substack{v\in V,\\ 2\leq k\leq a(v)}} \Ima T_{v,k}\right)\bigoplus \Ima T= \Rea^{d|\blowup{V}{a}|}$, as wanted.
\end{proof}

We denote $R=R(\blowup{G}{a},\blowup{p}{a})$ and $L=L(\blowup{G}{a},\blowup{p}{a})$.

\begin{claim}\label{claim:2}
The subspace $\Ima T$ is invariant under $L$. Moreover, 
\[
    \text{Spec}\left(L\big|_{\Ima T}\right)=\text{Spec}\left(L_a(G,p)\right).
    \]
\end{claim}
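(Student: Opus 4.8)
The plan is to establish an intertwining identity between $L$ and $L_a(G,p)$ via the map $T$, namely
\[
    L\circ T = T\circ L_a(G,p),
\]
from which both assertions of the claim follow immediately. Indeed, if this identity holds, then $\Ima T$ is $L$-invariant, since for $w=T\phi\in\Ima T$ we get $Lw = L(T\phi) = T(L_a(G,p)\phi)\in\Ima T$; and since $T$ is injective (the scalars $a(u)^{-1/2}$ are positive, so $T\phi=0$ forces $\phi=0$), it is a linear isomorphism onto $\Ima T$, whence $L\big|_{\Ima T} = T\,L_a(G,p)\,T^{-1}$ as operators on $\Ima T$. Similar operators have the same characteristic polynomial, so $\text{Spec}(L\big|_{\Ima T}) = \text{Spec}(L_a(G,p))$ as multisets.

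To verify the identity I would apply Lemma~\ref{lemma:L_formula} twice. First, apply it to the framework $(\blowup{G}{a},\blowup{p}{a})$ with the constant weight $f\equiv 1$ (recall $L=L(\blowup{G}{a},\blowup{p}{a})$ is the unweighted stiffness matrix of the blow-up): for $\phi\in\Rea^{d|V|}$ and $(u,i)\in\blowup{V}{a}$,
\[
    L(T\phi)((u,i)) = \sum_{(v,j)\sim(u,i)} d_{(u,i)(v,j)}\, d_{(u,i)(v,j)}^{\T}\bigl(T\phi((u,i)) - T\phi((v,j))\bigr).
\]
Now invoke the structural facts about the blow-up: (i) in $\blowup{G}{a}$ one has $(v,j)\sim(u,i)$ if and only if $v\sim u$ in $G$, and then this holds for every $1\le j\le a(v)$; (ii) $\blowup{p}{a}((u,i)) = p(u)$, hence $d_{(u,i)(v,j)} = d_{uv}$; (iii) $T\phi((u,i)) = a(u)^{-1/2}\phi(u)$ depends only on $u$. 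Substituting, the inner sum over the $a(v)$ copies of each neighbor $v$ contributes a factor of $a(v)$, so
\begin{align*}
    L(T\phi)((u,i)) &= \sum_{v\sim u} d_{uv} d_{uv}^{\T}\bigl(a(v)\,a(u)^{-1/2}\phi(u) - a(v)^{1/2}\phi(v)\bigr)\\
    &= a(u)^{-1/2}\sum_{v\sim u} d_{uv} d_{uv}^{\T}\bigl(a(v)\phi(u) - \sqrt{a(u)a(v)}\,\phi(v)\bigr).
\end{align*}
By Lemma~\ref{lemma:L_formula} applied to $(G,p)$ with weight $f=a$, the last sum equals $\bigl(L_a(G,p)\phi\bigr)(u)$. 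Hence $L(T\phi)((u,i)) = a(u)^{-1/2}\bigl(L_a(G,p)\phi\bigr)(u) = \bigl(T(L_a(G,p)\phi)\bigr)((u,i))$, which is precisely the identity $L\circ T = T\circ L_a(G,p)$.

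I do not anticipate a genuine obstacle here; the only point requiring care is the bookkeeping in the two applications of Lemma~\ref{lemma:L_formula} --- correctly tracking the powers $a(u)^{-1/2}$, $a(v)$ and $a(v)^{1/2}$, and the multiplicity $a(v)$ of parallel copies of each neighbor in the blow-up --- but this is routine. I would also remark that the same computational template (now using that $T_{v,k}x$ is supported on the two vertices $(v,1)$ and $(v,k)$) is what one should use next to show that each $\Ima T_{v,k}$ is $L$-invariant with $\text{Spec}(L\big|_{\Ima T_{v,k}}) = \text{Spec}(L_a^v(G,p))$; combined with the decomposition of Claim~\ref{claim:1}, these facts together yield Theorem~\ref{thm:blow_up}.
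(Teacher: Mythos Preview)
Your proposal is correct and follows essentially the same route as the paper: both establish the intertwining identity $L\,T = T\,L_a(G,p)$ by two applications of Lemma~\ref{lemma:L_formula} (once to the blow-up with $f\equiv 1$, once to $(G,p)$ with $f=a$), and then conclude invariance and equality of spectra from the injectivity of $T$. The only cosmetic difference is that you spell out explicitly why injectivity of $T$ yields similarity, whereas the paper states this in one line.
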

\begin{proof}

Let $\phi\in \Rea^{d|V|}$. Applying Lemma \ref{lemma:L_formula} to $\left(\blowup{G}{a},\blowup{p}{a}\right)$ (with respect to the constant function $f\equiv 1$), we obtain, for all $(u,i)\in \blowup{V}{a}$,
\begin{align*}
    L T \phi((u,i))&= \sum_{(v,j)\sim (u,i)} d_{uv}d_{uv}^{\T}\left(
    T\phi((u,i))-T\phi((v,j)) \right)
   \\ &= 
     \sum_{(v,j)\sim (u,i)} d_{uv}d_{uv}^{\T}\left(
    a(u)^{-1/2}\phi(u)-a(v)^{-1/2}\phi(v) \right)
 \\ &=\sum_{v\sim u} d_{uv} d_{uv}^{\T} \left(a(u)^{-1/2}a(v)\phi(u)-a(v)^{1/2}\phi(v)\right).
\end{align*}
Now, let $L_a=L_a(G,p)$. By Lemma \ref{lemma:L_formula}, we have, for all $u\in V$,
\[
 L_a \phi (u) =\sum_{v\sim u} d_{uv} d_{uv}^{\T} \left( a(v)\phi(u)-(a(u)a(v))^{1/2}\phi(v)\right).
\]
So, for $(u,i)\in \blowup{V}{a}$,
\[
T L_a \phi ((u,i)) = \sum_{v\sim u} d_{uv} d_{uv}^{\T} \left( a(u)^{-1/2}a(v)\phi(u)-a(v)^{1/2}\phi(v)\right)= L T \phi ((u,i)).
\]
Hence, $T L_a = L T$. That is, $\Ima T$ is invariant under $L$, and $L\big|_{\Ima T}$ is similar to $L_a$. Therefore, the two operators have the same spectrum.

\end{proof}

\begin{claim}\label{claim:3}
Let $v\in V$ and $2\leq k\leq a(v)$. Then, the subspace $\Ima T_{v,k}$ is invariant under $L$. Moreover, 
\[
    \text{Spec}\left(L\big|_{\Ima T_{v,k}}\right)=\text{Spec}\left(L_a^v(G,p)\right).
    \]
\end{claim}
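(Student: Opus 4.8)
The plan is to mimic the proof of Claim~\ref{claim:2}. I would exhibit a $d\times d$ matrix $M$ with $L T_{v,k}=T_{v,k}M$; since $T_{v,k}$ is injective, this immediately gives that $\Ima T_{v,k}$ is $L$-invariant and that $L\big|_{\Ima T_{v,k}}$ is similar to $M$ (via the isomorphism $T_{v,k}:\Rea^d\to\Ima T_{v,k}$), hence $\Spec(L\big|_{\Ima T_{v,k}})=\Spec(M)$; it then remains to recognise $M$ as $L_a^v(G,p)$.

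To find $M$, fix $x\in\Rea^d$, set $\phi:=T_{v,k}x$, and compute $L\phi$ vertex by vertex via Lemma~\ref{lemma:L_formula} applied to $(\blowup{G}{a},\blowup{p}{a})$ with $f\equiv 1$. The vector $\phi$ is supported on just the two blow-up vertices $(v,1)$ and $(v,k)$, with values $x$ and $-x$ respectively, and $\blowup{G}{a}$ has no edges inside a blow-up class. Consequently: at $(v,1)$ the only surviving contribution is the ``diagonal'' term $\big(\sum_{u\sim v}a(u)\,d_{vu}d_{vu}^{\T}\big)x$, because every neighbour of $(v,1)$ is of the form $(u,i)$ with $u\sim v$, where $\phi$ vanishes; at $(v,k)$ one gets the negative of this; and at any $(w,j)$ with $w\ne v$ one gets $0$ --- if $w\sim v$ the terms coming from $\phi((v,1))=x$ and $\phi((v,k))=-x$ cancel, and if $w\not\sim v$ there is nothing to contribute. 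Thus $\Ima T_{v,k}$ is $L$-invariant and $L T_{v,k}x=T_{v,k}(Mx)$ with $M=\sum_{u\sim v}a(u)\,d_{vu}d_{vu}^{\T}$. Finally, expanding the definition $L_a^v(G,p)=R_a^v(G,p)R_a^v(G,p)^{\T}$ gives $(L_a^v(G,p))_{i,j}=\sum_{u\in N_G(v)}a(u)(d_{vu})_i(d_{vu})_j$, which is exactly the $(i,j)$ entry of $M$; so $M=L_a^v(G,p)$ and the claim follows.

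The computation is routine, but there is one point that genuinely exploits the ``difference'' form of $T_{v,k}$ (as opposed to the uniform averaging defining $T$): the vanishing of $L\phi$ at vertices $(w,j)$ with $w\sim v$, where one must observe that the two nonzero values of $\phi$ are antipodal and sit at vertices at equal framework-distance from $(w,j)$, so their contributions cancel. The other thing to keep track of is that, because there are no edges within a blow-up class, no $a(v)$-dependent term is produced when evaluating $L\phi$ at $(v,1)$ or $(v,k)$ --- this is why $M$ depends only on the values $a(u)$ for $u\sim v$. Neither of these is a real obstacle; the main content of the claim is just setting up the intertwining relation $L T_{v,k}=T_{v,k}M$ correctly.
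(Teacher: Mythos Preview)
Your proposal is correct and follows essentially the same route as the paper: compute $L\,T_{v,k}x$ vertex by vertex via Lemma~\ref{lemma:L_formula} (with $f\equiv 1$), observe the cancellation at neighbours of $v$, and identify the resulting operator as $L_a^v(G,p)=\sum_{u\sim v}a(u)\,d_{vu}d_{vu}^{\T}$, yielding the intertwining $L\,T_{v,k}=T_{v,k}\,L_a^v(G,p)$. One small case you should also cover explicitly is $(v,j)$ with $j\notin\{1,k\}$: there $\phi((v,j))=0$ and every neighbour of $(v,j)$ lies outside the $v$-class (where $\phi$ vanishes), so $L\phi((v,j))=0$ as well.
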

\begin{proof}
Let $v\in V$,  $2\leq k\leq a(v)$, and $x\in \Rea^d$. By Lemma \ref{lemma:L_formula}, we have, for all $(u,i)\in \blowup{V}{a}$,
\begin{equation}\label{eq:last_claim_formula}
    L T_{v,k}x ((u,i))= \sum_{(w,j)\sim (u,i)} d_{uw} d_{uw}^{\T} \left( T_{v,k}x((u,i))- T_{v,k}x((w,j))\right).
\end{equation}
First, assume that $u\ne v$ or $i\notin\{1,k\}$. Then, $T_{v,k}x((u,i))=0$, and so we have
\[
    L T_{v,k}x ((u,i))= -\sum_{(w,j)\sim (u,i)} d_{uw} d_{uw}^{\T} T_{v,k}x((w,j)).
\]
If $u\not\sim v$, we have $T_{v,k}x((w,j))=0$ for all $(w,j)\sim (u,i)$, and so $L T_{v,k}x ((u,i))=0$. 
Similarly, if $u\sim v$, we obtain
\[
    L T_{v,k}x ((u,i))= - d_{uv} d_{uv}^{\T} T_{v,k}x((v,1)) - d_{uv} d_{uv}^{\T} T_{v,k}x((v,k)) =
    -d_{uv} d_{uv}^{\T} x +d_{uv} d_{uv}^{\T} x=0.
\]
Now, assume $u=v$ and $i\in\{1,k\}$. Note that $T_{v,k}x((w,j))=0$ for all $(w,j)\sim (v,i)$. So, if $i=1$, we obtain, by \eqref{eq:last_claim_formula},
\[
L T_{v,k} x((v,1))= \sum_{(w,j)\sim (v,1)} d_{vw} d_{vw}^{\T}  T_{v,k}x((v,1)) =\sum_{(w,j)\sim (v,1)} d_{vw} d_{vw}^{\T} x = \sum_{w\sim v} a(w) d_{vw} d_{vw}^{\T} x.
\]
Similarly, if $i=k$,
\[
L T_{v,k} x((v,k))= \sum_{(w,j)\sim (v,k)} d_{vw} d_{vw}^{\T}  T_{v,k}x((v,k)) =-\sum_{(w,j)\sim (v,k)} d_{vw} d_{vw}^{\T} x = -\sum_{w\sim v} a(w) d_{vw} d_{vw}^{\T} x.
\]
In conclusion,
\[
  L T_{v,k} x ((u,i)) =\begin{cases}
      \sum_{w\sim v} a(w) d_{vw}d_{vw}^{\T} x & \text{if } u=v \text{ and } i=1,\\
      -\sum_{w\sim v} a(w) d_{vw}d_{vw}^{\T} x & \text{if } u=v \text{ and } i=k,\\
      0 & \text{otherwise.}
  \end{cases}
\]
Now, let $L_a^v=L_a^v(G,p)$. Then,
\[L_a^v=R_a^v(G,p) R_a^v(G,p)^{\T}=\sum_{w\sim v} a(w)d_{vw} d_{vw}^{\T}.
\]
Hence, $T_{v,k} L_a^v =L T_{v,k}$.  That is, $\Ima T_{v,k}$ is invariant under $L$, and $L\big|_{\Ima T_{v,k}}$ is similar to  $L_a^v$. Therefore, the two operators have the same spectrum.

\end{proof}

Finally, by Claims \ref{claim:1}, \ref{claim:2} and \ref{claim:3}, we obtain
 \[
        \text{Spec}\left(L(\blowup{G}{a},\blowup{p}{a})\right)= \left(\bigcup_{v\in V} \text{Spec}(L_a^v(G,p))^{[a(v)-1]}\right)\cup \text{Spec}(L_a(G,p)).
    \]

\end{proof}

\begin{proof}[Proof of Corollary \ref{cor:blow_up}]

Let $a:V\to\mathbb{N}$ be defined by $a(v)=k$ for all $v\in V$. It is easy to check that $L_a(G,p)=k L(G,p)$, and $L_a^v(G,p)=k L^v(G,p)$ for all $v\in V$. 
Hence, by Theorem \ref{thm:blow_up}, 
    \begin{align*}
        \text{Spec}\left(L(\blowup{G}{k},\blowup{p}{k})\right) &= \left(\bigcup_{v\in V} \text{Spec}(L_a^v(G,p))^{[a(v)-1]}\right)\cup \text{Spec}(L_a(G,p))\\
        &= \left(\bigcup_{v\in V} \text{Spec}(kL^v(G,p))^{[k-1]}\right)\cup \text{Spec}(kL(G,p))\\
        &= k \left(\left(\bigcup_{v\in V} \text{Spec}(L^v(G,p))^{[k-1]}\right)\cup \text{Spec}( L(G,p))\right).
    \end{align*}
    Since $L(G,p)$ and $L^v(G,p)$ (for all $v\in V$) are positive semi-definite matrices, and the multiplicity of $0$ as an eigenvalue of $L(G,p)$ is at least $\binom{d+1}{2}$, we obtain
\[
        \lambda_{\binom{d+1}{2}+1}\left(L(\blowup{G}{k},\blowup{p}{k})\right) = k\cdot \min\left\{
        \left\{\lambda_1(L^v(G,p)): v\in V\right\}
        \cup \left\{ 
        \lambda_{\binom{d+1}{2}+1}(L(G,p))\right\}
        \right\}.
\]
In particular, for $k=2$,
\[
        \lambda_{\binom{d+1}{2}+1}\left(L(\blowup{G}{2},\blowup{p}{2})\right) = 2\cdot \min\left\{
        \left\{\lambda_1(L^v(G,p)): v\in V\right\}
        \cup \left\{ 
        \lambda_{\binom{d+1}{2}+1}(L(G,p))\right\}
        \right\}.
\]
Hence, for all $k\ge 2$,
\[
        \lambda_{\binom{d+1}{2}+1}\left(L(\blowup{G}{k},\blowup{p}{k})\right) = \frac{k}{2}\cdot \lambda_{\binom{d+1}{2}+1}\left(L(\blowup{G}{2},\blowup{p}{2})\right),
\]
as wanted.

\end{proof}

\section{Eigenvalue bounds}\label{sec:bounds}

In this section we prove the following lemma, which we will later need for the proofs of Theorems \ref{thm:knm} and \ref{thm:knm_small_d}.

\begin{lemma}\label{lemma:eigenvalue_bound}
     Let $d\geq 1$, and let $G=(V,E)$ be a graph. Then, for all $a:V\to\mathbb{N}$,
     \[
     a_d(\blowup{G}{a}) \ge \sup_{p:V\to\Rea^d} \left(\min \left\{ h(a)\cdot \lambda_{\binom{d+1}{2}+1}(L(G,p)), \, g(a)\cdot \min_{v\in V} \lambda_1(L^v(G,p))\right\}\right),
     \]
     where
     \[
     h(a)= \frac{\min_{\{u,v\}\in E} a(u)\cdot a(v)}{\max_{w\in V} a(w)}
     \]
     and
     \[
        g(a)= \min_{v\in V} a(v).
     \]
\end{lemma}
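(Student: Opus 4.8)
The plan is to fix an arbitrary map $p\colon V\to\Rea^d$, establish the inequality
\[
a_d(\blowup{G}{a})\ \ge\ \min\left\{\, h(a)\,\lambda_{\binom{d+1}{2}+1}(L(G,p)),\ \ g(a)\,\min_{v\in V}\lambda_1(L^v(G,p))\,\right\},
\]
and then take the supremum over $p$. The starting observation is that, by the very definition of $a_d$, plugging the embedding $\blowup{p}{a}$ of $\blowup{G}{a}$ into the supremum gives $a_d(\blowup{G}{a})\ge\lambda_{\binom{d+1}{2}+1}(L(\blowup{G}{a},\blowup{p}{a}))$, so it suffices to bound this quantity from below. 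For that I would apply Theorem \ref{thm:blow_up}, which describes $\Spec(L(\blowup{G}{a},\blowup{p}{a}))$ as the multiset union of $\Spec(L_a(G,p))$ together with $a(v)-1$ copies of $\Spec(L_a^v(G,p))$ for each $v\in V$.

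The next step is a counting argument to locate the relevant eigenvalue inside this combined multiset. Set $\alpha=\min\{\lambda_{\binom{d+1}{2}+1}(L_a(G,p)),\ \min_{v\in V}\lambda_1(L_a^v(G,p))\}$, noting that $\lambda_{\binom{d+1}{2}+1}(L_a(G,p))$ makes sense since $L_a(G,p)$ is a $d|V|\times d|V|$ matrix and $d|V|\ge d(d+1)>\binom{d+1}{2}$. Since at most $\binom{d+1}{2}$ eigenvalues of a symmetric matrix lie strictly below its $(\binom{d+1}{2}+1)$-th smallest eigenvalue, and $\alpha\le\lambda_{\binom{d+1}{2}+1}(L_a(G,p))$, at most $\binom{d+1}{2}$ eigenvalues of $L_a(G,p)$ are strictly less than $\alpha$; moreover no eigenvalue of any $L_a^v(G,p)$ is strictly less than $\lambda_1(L_a^v(G,p))\ge\alpha$. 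Hence the combined multiset has at most $\binom{d+1}{2}$ elements strictly below $\alpha$, so its $(\binom{d+1}{2}+1)$-th smallest element --- which is $\lambda_{\binom{d+1}{2}+1}(L(\blowup{G}{a},\blowup{p}{a}))$ --- is at least $\alpha$. It thus remains to prove $\lambda_1(L_a^v(G,p))\ge g(a)\,\lambda_1(L^v(G,p))$ for each $v$ and $\lambda_{\binom{d+1}{2}+1}(L_a(G,p))\ge h(a)\,\lambda_{\binom{d+1}{2}+1}(L(G,p))$, since together these give $\alpha\ge\min\{h(a)\,\lambda_{\binom{d+1}{2}+1}(L(G,p)),\ g(a)\,\min_v\lambda_1(L^v(G,p))\}$, and taking the supremum over $p$ then finishes the proof.

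The local comparison is straightforward: since $L_a^v(G,p)=\sum_{w\sim v}a(w)\,d_{vw}d_{vw}^{\T}$ and $L^v(G,p)=\sum_{w\sim v}d_{vw}d_{vw}^{\T}$, and $a(w)\ge g(a)$ for every $w$, the matrix $L_a^v(G,p)-g(a)\,L^v(G,p)=\sum_{w\sim v}(a(w)-g(a))\,d_{vw}d_{vw}^{\T}$ is positive semidefinite, and monotonicity of eigenvalues under the Loewner order gives $\lambda_i(L_a^v(G,p))\ge g(a)\,\lambda_i(L^v(G,p))$ for all $i$. For the global comparison, the key step I would carry out is the diagonal factorization $R_a(G,p)=P\,R(G,p)\,Q$, where $P\in\Rea^{d|V|\times d|V|}$ is diagonal with entry $a(u)^{-1/2}$ in each of the $d$ coordinates indexed by $u\in V$, and $Q\in\Rea^{|E|\times|E|}$ is diagonal with $\{u,w\}$-entry $\sqrt{a(u)a(w)}$; this is checked by comparing the definitions of $R_a(G,p)$ and $R(G,p)$ entrywise. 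Then $L_a(G,p)=P\,R(G,p)\,Q^2\,R(G,p)^{\T}\,P$, and since $Q^2\succeq(\min_{\{u,w\}\in E}a(u)a(w))\,I$ we get $R(G,p)Q^2R(G,p)^{\T}\succeq(\min_{\{u,w\}\in E}a(u)a(w))\,L(G,p)$, hence $L_a(G,p)\succeq(\min_{\{u,w\}\in E}a(u)a(w))\,P\,L(G,p)\,P$. Applying Ostrowski's Theorem \ref{thm:ost} with $H=L(G,p)$ and $S=P$ --- so that $SS^{\T}$ is diagonal with smallest eigenvalue $(\max_{w\in V}a(w))^{-1}$ --- gives $\lambda_i(P\,L(G,p)\,P)\ge(\max_{w\in V}a(w))^{-1}\,\lambda_i(L(G,p))$ for all $i$, and combining with Loewner monotonicity yields $\lambda_i(L_a(G,p))\ge h(a)\,\lambda_i(L(G,p))$ for all $i$, in particular for $i=\binom{d+1}{2}+1$.

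No new idea is needed beyond Theorem \ref{thm:blow_up}, Ostrowski's inequality (Theorem \ref{thm:ost}) and eigenvalue monotonicity; the only points requiring care are the bookkeeping ones: verifying the factorization $R_a(G,p)=P\,R(G,p)\,Q$, keeping every positive-semidefinite comparison oriented in the right direction, and making sure the counting argument really pins down the $(\binom{d+1}{2}+1)$-th eigenvalue rather than an index-shifted one --- the usual pitfall when comparing a matrix with a direct summand of a larger matrix.
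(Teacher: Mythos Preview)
Your proposal is correct and follows essentially the same approach as the paper: apply Theorem~\ref{thm:blow_up} to decompose $\Spec(L(\blowup{G}{a},\blowup{p}{a}))$, reduce to bounding $\lambda_{\binom{d+1}{2}+1}(L_a(G,p))$ and $\lambda_1(L_a^v(G,p))$ in terms of their unweighted counterparts, and handle the former via the factorization $R_a(G,p)=P\,R(G,p)\,Q$ combined with Ostrowski's theorem. The only cosmetic differences are that the paper packages the two scaling bounds as separate lemmas (Lemmas~\ref{lemma:scaling1} and~\ref{lemma:scaling2}, both proved via Corollary~\ref{cor:ost}) and, for the index-counting step, observes directly that $L_a(G,p)$ has at least $\binom{d+1}{2}$ zero eigenvalues (since $\rank L_a(G,p)=\rank L(G,p)$), giving an equality rather than your inequality --- but your counting argument is equally valid for the lower bound needed here.
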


We will need the following two simple lemmas, relating the weighted rigidity matrices of a framework to their unweighted counterparts.

\begin{lemma}\label{lemma:weighted_rigidity_matrix_form}
Let $G=(V,E)$ be a graph, and let $f:V\to \Rea_{>0}$. Then,
\[
    R_f(G,p)= \hat{D}_f R(G,p) \tilde{D}_f, 
\]
where $\hat{D}_f\in \Rea^{d|V|\times d|V|}$ is a diagonal matrix with entries
\[
    (\hat{D}_f)_{(v,i),(v,i)}= \frac{1}{\sqrt{f(v)}}
\]
for all $v\in V$ and $1\leq i\leq d$,
and $\tilde{D}_f\in \Rea^{|E|\times |E|}$ is a diagonal matrix with entries
\[
    (\tilde{D}_f)_{e,e}= \sqrt{f(u)f(v)}
\]
for all $e=\{u,v\}\in E$.
\end{lemma}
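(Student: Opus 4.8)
The plan is to prove the identity by a straightforward entrywise comparison of the two sides. Since $\hat{D}_f$ and $\tilde{D}_f$ are diagonal, left-multiplication by $\hat{D}_f$ rescales the row of $R(G,p)$ indexed by $(v,i)$ by the factor $1/\sqrt{f(v)}$, and right-multiplication by $\tilde{D}_f$ rescales the column indexed by $e=\{u,v\}$ by the factor $\sqrt{f(u)f(v)}$. Hence, for every $u\in V$, $i\in[d]$ and $e\in E$,
\[
(\hat{D}_f\, R(G,p)\, \tilde{D}_f)_{(u,i),e} = \frac{1}{\sqrt{f(u)}}\cdot R(G,p)_{(u,i),e}\cdot (\tilde{D}_f)_{e,e}.
\]

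Next I would distinguish two cases according to whether $u\in e$. If $e=\{u,v\}$ for some $v\in V$, then by definition $R(G,p)_{(u,i),e}=(d_{uv})_i$ and $(\tilde{D}_f)_{e,e}=\sqrt{f(u)f(v)}$, so the right-hand side above becomes
\[
\frac{1}{\sqrt{f(u)}}\,(d_{uv})_i\,\sqrt{f(u)f(v)} = \sqrt{f(v)}\,(d_{uv})_i,
\]
which is precisely $R_f(G,p)_{(u,i),e}$. If instead $u\notin e$, then $R(G,p)_{(u,i),e}=0$ and also $R_f(G,p)_{(u,i),e}=0$, so the two sides agree. Since these cases are exhaustive, the matrices $R_f(G,p)$ and $\hat{D}_f R(G,p)\tilde{D}_f$ have equal entries, which proves the claim.

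The computation is entirely routine and I do not anticipate any real obstacle; the only point meriting attention is the bookkeeping of the row index, namely that the diagonal entry of $\hat{D}_f$ contributing to row $(u,i)$ is $1/\sqrt{f(u)}$, so that it cancels exactly one of the two square-root factors in $(\tilde{D}_f)_{e,e}=\sqrt{f(u)f(v)}$ and leaves the single factor $\sqrt{f(v)}$ demanded by the definition of $R_f(G,p)$.
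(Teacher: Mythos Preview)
Your proof is correct and follows essentially the same approach as the paper: an entrywise comparison, noting that the diagonal factors scale rows and columns and that the $1/\sqrt{f(u)}$ from $\hat{D}_f$ cancels one of the square-root factors in $(\tilde{D}_f)_{e,e}$. Your version is slightly more explicit in treating the case $u\notin e$ separately, but the argument is otherwise identical to the paper's.
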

\begin{proof}
    Multiplying $R(G,p)$ on the left by $\hat{D}_f$ and on the right by $\tilde{D}_f$ results in multiplying the entry in row $(u,i)$, for $u\in V$ and $1\leq i\leq d$, and column $e=\{u,v\}\in E$, by
    \[
        \frac{\sqrt{f(u)f(v)}}{\sqrt{f(u)}}=\sqrt{f(v)},
    \]
    just as in the definition of $R_f(G,p)$.
\end{proof}

\begin{lemma}\label{lemma:weighted_rigidity_matrix_form_2}
Let $G=(V,E)$ be a graph, and let $f:V\to \Rea_{>0}$. Then, 
\[
    R_f^v(G,p)= R^v(G,p) D_f^v, 
\]
where $D_f^v \in \Rea^{\deg_G(v)\times \deg_G(v)}$ is a diagonal matrix with entries
\[
    (D_f^v)_{u,u} = \sqrt{f(u)}
\]
for all $u\in N_G(v)$.
\end{lemma}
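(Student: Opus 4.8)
The plan is to prove the identity by a direct column-by-column (equivalently, entrywise) comparison, in the same spirit as the proof of Lemma \ref{lemma:weighted_rigidity_matrix_form}, but simpler, since here only a right-multiplication by a diagonal matrix is involved (there is no analogue of the left factor $\hat D_f$, because the local rigidity matrices $R^v(G,p)$ and $R^v_f(G,p)$ have no column indexed by $v$ itself). Both matrices lie in $\Rea^{d\times\deg_G(v)}$ and have their columns indexed by the neighbours $u\in N_G(v)$ of $v$; by definition, the column of $R^v(G,p)$ indexed by $u$ is $d_{vu}\in\Rea^d$, while the column of $R^v_f(G,p)$ indexed by $u$ is $\sqrt{f(u)}\,d_{vu}$.

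First I would invoke the elementary fact that right-multiplying a matrix by a diagonal matrix rescales its columns: since $D_f^v$ is the $\deg_G(v)\times\deg_G(v)$ diagonal matrix with $(D_f^v)_{u,u}=\sqrt{f(u)}$, the column of $R^v(G,p)D_f^v$ indexed by $u$ equals $\sqrt{f(u)}$ times the column of $R^v(G,p)$ indexed by $u$, i.e.\ $\sqrt{f(u)}\,d_{vu}$. Comparing with the definition of $R^v_f(G,p)$, the two matrices have identical columns and are therefore equal. Spelled out entrywise, for every $i\in[d]$ and $u\in N_G(v)$,
\[
\bigl(R^v(G,p)\,D_f^v\bigr)_{i,u}=R^v(G,p)_{i,u}\cdot (D_f^v)_{u,u}=(d_{vu})_i\cdot\sqrt{f(u)}=R^v_f(G,p)_{i,u}.
\]

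I expect no genuine obstacle here: this is a bookkeeping identity, and the only point requiring any care is keeping the column indexing of the local rigidity matrices consistent with the indexing of $D_f^v$ by the neighbour set $N_G(v)$. The purpose of this lemma, together with Lemma \ref{lemma:weighted_rigidity_matrix_form}, will be to exhibit the weighted (lower) stiffness matrices as diagonal congruences of their unweighted counterparts, so that Ostrowski's theorem (Theorem \ref{thm:ost}) and Corollary \ref{cor:ost} can be applied to sandwich $\lambda_1(L^v_f(G,p))$ between the minimal and maximal values of $f$ times $\lambda_1(L^v(G,p))$ — which is the mechanism I anticipate behind the proof of Lemma \ref{lemma:eigenvalue_bound}.
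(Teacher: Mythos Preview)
Your proof is correct and takes essentially the same approach as the paper: both observe that right-multiplying $R^v(G,p)$ by the diagonal matrix $D_f^v$ scales the column indexed by $u\in N_G(v)$ by $\sqrt{f(u)}$, which matches the definition of $R^v_f(G,p)$. Your write-up is slightly more detailed (giving the entrywise identity explicitly), but the argument is identical.
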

   
\begin{proof}
     Right multiplication of $R^v(G,p)$ by $D_f^v$ results in multiplying the entries in column $u\in N_G(v)$ by
    $
        \sqrt{f(u)},
    $ 
    just as in the definition of $R^v_f(G,p)$.
\end{proof}

\begin{lemma}\label{lemma:scaling1}
    Let $G=(V,E)$ be a graph, and let $p:V\to\Rea^d$ and $f:V\to \mathbb{R}_{>0}$.
    Let
    \[
        C=\frac{\max\{ f(u)\cdot f(v) : \, \{u,v\}\in E\}}{\min\{f(v):\, v\in V\}},
    \]
    and
    \[
        c=\frac{\min\{ f(u)\cdot f(v) : \, \{u,v\}\in E\}}{\max\{f(v):\, v\in V\}}.
    \]   
Then, for all $1\leq k\leq d|V|$,
    \[
        c\cdot \lambda_k(L(G,p))\leq \lambda_{k}(L_f(G,p)) \leq C\cdot \lambda_k(L(G,p)).
    \]
\end{lemma}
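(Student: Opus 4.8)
The plan is to express $L_f(G,p)$ as a two-sided diagonal conjugation of a matrix built from the \emph{unweighted} rigidity matrix, and then strip off the two diagonal factors one at a time using Ostrowski's theorem (Theorem~\ref{thm:ost}) and its Corollary~\ref{cor:ost}. Concretely, since $\hat{D}_f$ and $\tilde{D}_f$ are diagonal (hence symmetric), Lemma~\ref{lemma:weighted_rigidity_matrix_form} gives $R_f(G,p)=\hat{D}_f\,R(G,p)\,\tilde{D}_f$, and therefore
\[
L_f(G,p)=R_f(G,p)R_f(G,p)^{\T}=\hat{D}_f\,\big(R(G,p)\,\tilde{D}_f^2\,R(G,p)^{\T}\big)\,\hat{D}_f .
\]
Write $H=R(G,p)\,\tilde{D}_f^2\,R(G,p)^{\T}\in\Rea^{d|V|\times d|V|}$, a symmetric matrix, so that $L_f(G,p)=\hat{D}_f\,H\,\hat{D}_f$.

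First I would peel off $\hat{D}_f$. Since $f>0$, the matrix $\hat{D}_f$ is nonsingular, and $\hat{D}_f\hat{D}_f^{\T}=\hat{D}_f^{2}$ is the diagonal matrix each of whose diagonal entries $1/f(v)$ occurs $d$ times; hence $\lambda_1(\hat{D}_f\hat{D}_f^{\T})=1/\max_{w\in V}f(w)$ and $\lambda_{d|V|}(\hat{D}_f\hat{D}_f^{\T})=1/\min_{w\in V}f(w)$. Applying Theorem~\ref{thm:ost} with this $H$ and $S=\hat{D}_f$ yields, for all $1\le k\le d|V|$,
\[
\frac{\lambda_k(H)}{\max_{w\in V}f(w)}\le \lambda_k\big(L_f(G,p)\big)\le \frac{\lambda_k(H)}{\min_{w\in V}f(w)}.
\]
Next I would peel off $\tilde{D}_f$. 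Observe that $H=A^{\T}\tilde{D}_f^{\T}\tilde{D}_f A$ with $A=R(G,p)^{\T}\in\Rea^{|E|\times d|V|}$, and $A^{\T}A=R(G,p)R(G,p)^{\T}=L(G,p)$. Since $\tilde{D}_f$ is nonsingular and $\tilde{D}_f\tilde{D}_f^{\T}$ is the diagonal matrix with entries $f(u)f(v)$ indexed by the edges $\{u,v\}\in E$, Corollary~\ref{cor:ost} applied to $A$ and $S=\tilde{D}_f$ gives, for all $1\le k\le d|V|$,
\[
\min_{\{u,v\}\in E}f(u)f(v)\cdot\lambda_k\big(L(G,p)\big)\le \lambda_k(H)\le \max_{\{u,v\}\in E}f(u)f(v)\cdot\lambda_k\big(L(G,p)\big).
\]

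Chaining the two pairs of inequalities and recalling the definitions of $c$ and $C$ then gives $c\cdot\lambda_k(L(G,p))\le\lambda_k(L_f(G,p))\le C\cdot\lambda_k(L(G,p))$, as desired. I do not anticipate a genuine obstacle: the only points requiring care are that the index ranges match (Theorem~\ref{thm:ost} applies to all eigenvalues of the $d|V|\times d|V|$ matrix $L_f(G,p)$, and Corollary~\ref{cor:ost} requires $k\le m=d|V|$, which is exactly the hypothesis), and that $\hat{D}_f\hat{D}_f^{\T}$ and $\tilde{D}_f\tilde{D}_f^{\T}$ have the claimed extreme eigenvalues, both of which are immediate from their diagonal form. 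A slightly different but equivalent route would be to pass to lower stiffness matrices --- writing $\lminusva{}{f}(G,p)=\tilde{D}_f R(G,p)^{\T}\hat{D}_f^{2} R(G,p)\tilde{D}_f$ --- and apply Corollary~\ref{cor:ost} twice there, using Lemma~\ref{lemma:lower_stiffness_spectrum} to transfer back to $L$ and $L_f$; conjugating $H$ by the square nonsingular matrix $\hat{D}_f$ directly via Ostrowski is marginally cleaner.
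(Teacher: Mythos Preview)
Your proof is correct and follows essentially the same approach as the paper: factor $L_f(G,p)$ via Lemma~\ref{lemma:weighted_rigidity_matrix_form}, then strip off $\hat{D}_f$ using Ostrowski's Theorem~\ref{thm:ost} and $\tilde{D}_f$ using Corollary~\ref{cor:ost}. The paper phrases the two steps as $\lambda_k(L_f)=c_1 c_2\,\lambda_k(L)$ with $c_1,c_2$ lying in the appropriate intervals, but this is only a cosmetic difference from your chained inequalities.
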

\begin{proof}
Let $R=R(G,p)$, $L=L(G,p)$ and $L_f=L_f(G,p)$. Let $1\leq k\leq d|V|$. By Lemma \ref{lemma:weighted_rigidity_matrix_form}, we have
\[
  L_f= \hat{D}_f R \tilde{D}_f \tilde{D}_f^{\T} R^{\T} \hat{D}_f^{\T}.
\]
The eigenvalues of $\hat{D}_f \hat{D}_f^{\T}$ are $1/f(v)$ for $v\in V$ (each repeated $d$ times).
Therefore, by Theorem \ref{thm:ost},
\[
    \lambda_k(L_f)= c_1\cdot  \lambda_k(R \tilde{D}_f \tilde{D}_f^{\T} R^{\T}),
\]
where 
\[
\frac{1}{\max_{v\in V}{f(v)}} \leq c_1 \leq \frac{1}{\min_{v\in V}{f(v)}}.
\]
The eigenvalues of $\tilde{D}_f^{\T} \tilde{D}_f$ are $f(u)\cdot f(v)$, for $\{u,v\}\in E$. Therefore, by Corollary \ref{cor:ost},
\[
   \lambda_k(R \tilde{D}_f \tilde{D}_f^{\T} R^{\T})= c_2\cdot \lambda_k(R R^{\T})=c_2\cdot \lambda_k(L),
\]
where
\[
    \min_{\{u,v\}\in E} f(u)\cdot f(v)\le c_2\le \max_{\{u,v\}\in E} f(u)\cdot f(v).
\]
We obtain
\[
    \lambda_k(L_f)= c_1\cdot c_2\cdot \lambda_k(L),
\]
where
\[
              c\le c_1\cdot c_2\le C,
\]
as wanted.
\end{proof}

\begin{lemma}\label{lemma:scaling2}
    Let $G=(V,E)$ be a graph, and let $f:V\to \mathbb{R}_{>0}$. Let $v\in V$, and let
    \[
        C=\max\{ f(u) :\, u\in N_G(v)\}
    \]
    and
    \[
        c=\min\{ f(u) :\, u\in N_G(v)\}.
    \]   
Then, for all $1\leq k\leq d$,
    \[
        c\cdot \lambda_k(L^v(G,p))\leq \lambda_{k}(L_f^v(G,p)) \leq C\cdot \lambda_k(L^v(G,p)).
    \] 
\end{lemma}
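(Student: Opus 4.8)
The plan is to mimic the proof of Lemma~\ref{lemma:scaling1}, but in the simpler setting of local stiffness matrices, where the conjugating diagonal matrix acts only on one side. First I would invoke Lemma~\ref{lemma:weighted_rigidity_matrix_form_2}, which gives $R_f^v(G,p)= R^v(G,p) D_f^v$, where $D_f^v$ is the diagonal matrix on $\Rea^{\deg_G(v)\times\deg_G(v)}$ with $(D_f^v)_{u,u}=\sqrt{f(u)}$ for $u\in N_G(v)$. Taking $R^v=R^v(G,p)$, this yields
\[
    L_f^v(G,p)=R_f^v(G,p)R_f^v(G,p)^{\T}= R^v D_f^v (D_f^v)^{\T} (R^v)^{\T}= R^v (D_f^v)^{\T} D_f^v (R^v)^{\T},
\]
using that $D_f^v$ is diagonal (hence symmetric) so $D_f^v(D_f^v)^{\T}=(D_f^v)^{\T}D_f^v$.

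Next I would apply Corollary~\ref{cor:ost} with $A=(R^v)^{\T}\in\Rea^{\deg_G(v)\times d}$ and $S=D_f^v$ (which is non-singular since $f$ takes positive values). Note that $A^{\T}A = R^v(R^v)^{\T}=L^v(G,p)$ and $A^{\T}S^{\T}S A = R^v (D_f^v)^{\T}D_f^v (R^v)^{\T}=L_f^v(G,p)$. The eigenvalues of $S S^{\T}=(D_f^v)^2$ are exactly the numbers $f(u)$ for $u\in N_G(v)$, so $\lambda_1(SS^{\T})=\min_{u\in N_G(v)}f(u)=c$ and $\lambda_{\deg_G(v)}(SS^{\T})=\max_{u\in N_G(v)}f(u)=C$. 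Hence Corollary~\ref{cor:ost}, applied with $n=\deg_G(v)$ and $m=d$, gives for all $1\le k\le d$
\[
    c\cdot \lambda_k(L^v(G,p))=\lambda_1(SS^{\T})\cdot\lambda_k(A^{\T}A)\le \lambda_k(A^{\T}S^{\T}SA)=\lambda_k(L_f^v(G,p))\le \lambda_{\deg_G(v)}(SS^{\T})\cdot\lambda_k(A^{\T}A)=C\cdot\lambda_k(L^v(G,p)),
\]
which is precisely the claimed inequality.

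There is essentially no obstacle here: the only point requiring a small amount of care is the bookkeeping in Corollary~\ref{cor:ost}, since that statement is phrased for an index range $1\le i\le m$ with $m$ the number of columns of $A$ --- so one must orient $A$ as $(R^v)^{\T}$ (with $m=d$ columns and $n=\deg_G(v)$ rows) rather than as $R^v$ itself, in order for the index $k$ to run over $1\le k\le d$ as stated in the lemma. With that orientation fixed, the rest is a direct substitution. (One could alternatively apply Theorem~\ref{thm:ost} directly to $H=L^v(G,p)=R^v(R^v)^{\T}$ with conjugating matrix $R^v(D_f^v)^2(R^v)^{\T}$... no --- the cleanest route really is Corollary~\ref{cor:ost} as above, since $R^v$ is generally not square.)
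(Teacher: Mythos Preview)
Your proof is correct and follows essentially the same approach as the paper: both use Lemma~\ref{lemma:weighted_rigidity_matrix_form_2} to factor $R_f^v(G,p)=R^v(G,p)D_f^v$ and then apply Corollary~\ref{cor:ost} with $A=(R^v)^{\T}$ and $S=D_f^v$. Your explicit remark about orienting $A$ as $(R^v)^{\T}$ so that the index range $1\le k\le d$ matches is a helpful clarification that the paper leaves implicit.
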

\begin{proof}
    Let $R^v = R^v(G,p)$, $L^v = L^v(G,p)$ and $L_f^v = L_f^v(G,p)$. Let $1\le k \le d$. By Lemma \ref{lemma:weighted_rigidity_matrix_form_2}, we have
    \[
        L^v_f = R^v D_f^v {D_f^v}^{\T} {R^v}^{\T}.
    \]
    The eigenvalues of ${D_f^v}^{\T} {D_f^v}$ are $f(u)$, for $u\in N_G(v)$. Therefore, by Corollary \ref{cor:ost},
    \[
        \lambda_k (L^v_f) = \lambda_k(R^v D_f^v {D_f^v}^{\T} {R^v}^{\T})= c_1\cdot \lambda_k(R^v {R^v}^{\T})=c_1\cdot \lambda_k(L^v),
    \]
    where 
    \[
       c= \min_{u\in N_G(v)} f(u)\le c_1\le \max_{u\in N_G(v)} f(u)= C.
       \]
\end{proof}

\begin{proof}[Proof of Lemma \ref{lemma:eigenvalue_bound}]

By Theorem \ref{thm:blow_up}, we have that 
    \[
        \text{Spec}\left(L(\blowup{G}{a},\blowup{p}{a})\right)= \left(\bigcup_{v\in V} \text{Spec}(L_a^v(G,p))^{[a(v)-1]}\right)\cup \text{Spec}(L_a(G,p)).
    \]
Note that the multiplicity of $0$ as an eigenvalue of $L_a(G,p)$ is at least $\binom{d+1}{2}$ (since, by Lemma \ref{lemma:weighted_rigidity_matrix_form}, the rank of $L_a(G,p)$ is the same as the rank as $L(G,p)$). Therefore, since all eigenvalues of $L(\blowup{G}{a},\blowup{p}{a})$ are non-negative, we obtain 
\[
    \lambda_{\binom{d+1}{2}+1}\left(L(\blowup{G}{a},\blowup{p}{a})\right) = \min\left\{\lambda_{\binom{d+1}{2}+1}(L_a(G,p)),\, \min_{v\in V}\lambda_1(L_a^v(G,p))\right\}.
\]
By Lemma \ref{lemma:scaling1} and Lemma \ref{lemma:scaling2}, we have
\[
    \lambda_{\binom{d+1}{2}+1}(L_a(G,p)) \ge h(a)\cdot \lambda_{\binom{d+1}{2}+1}(L(G,p)),
\] 
and, for all $v\in V$, 
\[
    \lambda_1(L_a^v(G,p)) \ge g(a) \cdot \lambda_1(L^v(G,p)).
\]
Hence,
\[
    \lambda_{\binom{d+1}{2}+1}(L(\blowup{G}{a},\blowup{p}{a})) \ge \min \left\{ h(a)\cdot \lambda_{\binom{d+1}{2}+1}(L(G,p)), \, g(a)\cdot \min_{v\in V} \lambda_1(L^v(G,p))\right\}.
\]
Since 
\[
    a_d(\blowup{G}{a}) = \sup_{p^*:\blowup{V}{a}\to\Rea^d}\left(\lambda_{\binom{d+1}{2}+1}(L(\blowup{G}{a},p^*))\right) \ge \sup_{p:V\to\Rea^d}\left(\lambda_{\binom{d+1}{2}+1}(L(\blowup{G}{a},\blowup{p}{a}))\right),
\]
we conclude that
\[
    a_d(\blowup{G}{a}) \ge \sup_{p:V\to\Rea^d} \left(\min \left\{ h(a)\cdot \lambda_{\binom{d+1}{2}+1}(L(G,p)), \, g(a)\cdot \min_{v\in V} \lambda_1(L^v(G,p))\right\}\right).
\]
\end{proof}

\section{Complete bipartite graphs}\label{sec:completebipartitegraphs}

We proceed to prove Theorem \ref{thm:knm}. We will need the following Lemma.

\begin{lemma}\label{lemma:nonsingular_local_matrix}
    Let $G=(V,E)$ be a graph, let $p:V\to \Rea^d$ be a generic embedding, and let $f:V\to\Rea_{>0}$. Then, for all $v\in V$ with $\deg_G(v)\ge d$, $L^{v}_f(G,p)$ is non-singular.
\end{lemma}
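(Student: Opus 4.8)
The plan is to reduce the statement to a single genericity-based non-vanishing of a determinant. First I would unwind the definitions: by construction $R^v_f(G,p)$ has columns $\sqrt{f(u)}\,d_{vu}$ for $u\in N_G(v)$, so
\[
L^v_f(G,p) = R^v_f(G,p)R^v_f(G,p)^{\T} = \sum_{u\in N_G(v)} f(u)\, d_{vu} d_{vu}^{\T},
\]
and hence for every $x\in\Rea^d$ we have $x^{\T} L^v_f(G,p)\, x = \sum_{u\in N_G(v)} f(u)\,(d_{vu}^{\T} x)^2 \ge 0$. Since $f$ takes only positive values, this quadratic form vanishes exactly when $d_{vu}^{\T} x=0$ for all $u\in N_G(v)$. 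Therefore $L^v_f(G,p)$ is non-singular (indeed positive definite) if and only if the vectors $\{d_{vu}: u\in N_G(v)\}$ span $\Rea^d$, and it suffices to verify this spanning property when $p$ is generic and $\deg_G(v)\ge d$.

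Since $p$ is generic, the $d|V|$ coordinates of $p$ are algebraically independent over $\QQ$; in particular $p(u)\ne p(v)$ for every $u\ne v$, so each $d_{vu}$ is a genuine unit vector parallel to $p(v)-p(u)$. Thus the $d_{vu}$ span $\Rea^d$ if and only if the difference vectors $\{p(v)-p(u): u\in N_G(v)\}$ span $\Rea^d$. Using $\deg_G(v)\ge d$, I would fix $d$ distinct neighbors $u_1,\dots,u_d\in N_G(v)$ and consider the $d\times d$ matrix $M$ whose $i$-th column is $p(v)-p(u_i)$. Its determinant is a polynomial with integer coefficients in the $d|V|$ coordinates of $p$.

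The key step is that this determinant polynomial is not identically zero: evaluating it at the (non-generic) configuration with $p(v)=0$, $p(u_i)=-e_i$ for $i\in[d]$ (and arbitrary values on the remaining vertices) yields $\det(I_d)=1\ne 0$. Because the coordinates of a generic $p$ are algebraically independent over $\QQ$, no nonzero polynomial over $\QQ$ can vanish on them, so $\det M\ne 0$; hence $p(v)-p(u_1),\dots,p(v)-p(u_d)$ are linearly independent and span $\Rea^d$. Combined with the first paragraph, this shows $L^v_f(G,p)$ is non-singular. The only genuine obstacle is the standard fact that a generic point lies on no $\QQ$-rational hypersurface, which is settled by exhibiting the explicit test configuration above; the rest is bookkeeping. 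One could alternatively dispatch the case $f\equiv 1$ first and then invoke Lemma~\ref{lemma:scaling2} to pass to general $f$, but the direct positive-definiteness argument avoids this detour.
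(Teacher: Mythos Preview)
Your proof is correct and follows essentially the same route as the paper: both reduce non-singularity of $L^v_f(G,p)$ to the statement that the vectors $d_{vu}$ (equivalently $p(v)-p(u)$) for $u\in N_G(v)$ span $\Rea^d$, and then invoke genericity of $p$ together with $\deg_G(v)\ge d$. The paper simply asserts that a generic embedding puts these vectors in general position and concludes $\rank R^v_f(G,p)=d$, whereas you spell out the underlying determinant-nonvanishing argument and frame the conclusion as positive definiteness; these are equivalent, and your version is just a more detailed execution of the same idea.
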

\begin{proof}
Since $p$ is an embedding, we have $p(u)\ne p(v)$, and therefore $d_{vu}=(p(v)-p(u))/\|p(v)-p(u)\|$, for all $u\ne v$. Since $p$ is generic, the column vectors $\{\sqrt{f(u)}d_{vu}\}_{u\in N_G(v)}$ are in general position, that is, every $d$ of them are linearly independent. Finally, since $\deg_G(v)\ge d$, we obtain $\rank(R_f^v(G,p))=d$. That is, $R_f^v(G,p)$ has full row rank. Hence, $L_f^v(G,p)= R_f^v(G,p) R_f^v(G,p)^{\T}$ is non-singular.
\end{proof}

\begin{proof}[Proof of Theorem \ref{thm:knm}]

For $d=1$ the claim is trivial, so we assume $d\ge 2$.  
Let 
\[
S_d= \left\{ (n_0,m_0):\, n_0,m_0\ge d+1, \, n_0+m_0=\binom{d+2}{2}\right\}.
\]
For a graph $G=(V,E)$, let
\[
    b_d(G)= \sup_{p: V\to \Rea^d} \left( \min \left\{ \lambda_{\binom{d+1}{2}+1}(L(G,p)), \, \min_{v\in V} \lambda_1(L^v(G,p))\right\}\right).
\]
Let $(n_0,m_0)\in S_d$, and let $p:V(K_{n_0,m_0})\to \Rea^d$ be a generic embedding.
By Theorem \ref{thm:whiteley}, $K_{n_0,m_0}$ is $d$-rigid, and therefore $\lambda_{\binom{d+1}{2}+1}(L(K_{n_0,m_0},p))>0$. By Lemma \ref{lemma:nonsingular_local_matrix}, $\lambda_1(L^v(K_{n_0,m_0},p))>0$ for all $v\in V$. Therefore, $b_d(K_{n_0,m_0})>0$. Let
\[
c_d= \min \left\{ \frac{b_d(K_{n_0,m_0})}{4\cdot \max\{n_0,m_0\}} :\, (n_0,m_0)\in S_d\right\}.
\]
Note that $c_d>0$.

Now, let $n,m\ge d+1$ satisfying $n+m\ge \binom{d+2}{2}$. Let $(n_0,m_0)\in S_d$ such that $n\ge n_0$ and $m\ge m_0$. Let $q_1=\lfrac{n}{n_0}$ and $q_2=\lfrac{m}{m_0}$, and write $n=q_1 n_0+r_1$, $m=q_2 m_0+r_2$, where $0\le r_1<n_0$ and $0\le r_2<m_0$. Denote the partite sets of $K_{n_0,m_0}$ by $A=\{1,\ldots,n_0\}$ and $B=\{n_0+1,\ldots, n_0+m_0\}$. Define $a:A\cup B\to \mathbb{N}$ by
\[
    a(v)=\begin{cases}
        q_1+1 & \text{if } 1\le v\le r_1,\\
        q_1 & \text{if } r_1+1\le v\le n_0,\\
        q_2+1 & \text{if } n_0+1\le v\le n_0+r_2,\\
        q_2 & \text{if } n_0+r_2+1\le v\le n_0+m_0,
    \end{cases}
\]
for all $v\in A\cup B$. Note that $K_{n_0,m_0}^{(a)}=K_{n,m}$. Let
\begin{align*}
    h(a) &= \frac{ \min_{\{u,v\}\in E(K_{n_0,m_0})} a(u)\cdot a(v)}{\max_{w\in V(K_{n_0,m_0})} a(w)}\ge \frac{q_1\cdot q_2}{\max\{q_1+1,q_2+1\}}
    \\ &
    \ge \frac{q_1\cdot q_2}{2\cdot \max\{q_1,q_2\}}
    = \frac{1}{2}\cdot \min\{q_1,q_2\} \ge \frac{1}{4}\cdot \min\left\{\frac{n}{n_0},\frac{m}{m_0}\right\},
\end{align*}
and
\[
    g(a)= \min_{v\in V(K_{n_0,m_0})} a(v) = \min\{q_1,q_2\}\ge \frac{1}{2}\cdot \min\left\{\frac{n}{n_0},\frac{m}{m_0}\right\}.
\]
By Lemma \ref{lemma:eigenvalue_bound}, we obtain
\[
    a_d(K_{n,m})\ge b_d(K_{n_0,m_0}) \cdot \frac{1}{4} \cdot \min\left\{\frac{n}{n_0},\frac{m}{m_0}\right\} \ge c_d \cdot \min\{n,m\},
\]
as wanted.
\end{proof}

We proceed to prove Theorem \ref{thm:knm_small_d}. The proof is similar to the proof of Theorem \ref{thm:knm}, except that we analyze specific embeddings of a small complete bipartite graph in order to obtain explicit bounds for $d=2$ and $d=3$.

\begin{proof}[Proof of Theorem \ref{thm:knm_small_d}]

First, we deal with the $d=2$ case. Let $A = \{1,2,3,4,5\}$ and $B = \{6,7,8,9,10\}$ be the partite sets of the complete bipartite graph $K_{5,5}$. Let $q_1 = \lfloor\frac{n}{5}\rfloor$, and $q_2 = \lfloor\frac{m}{5}\rfloor$, and write $n = 5q_1 + r_1$, $m = 5q_2 + r_2$, where $0\leq r_1, r_2 < 5$. Define $a:A\cup B\to \mathbb{N}$ by
\begin{equation*}
a(v) = 
\begin{cases}
    q_1+1 & \text{if } 1\le v\le r_1,\\
    q_1 & \text{if } r_1+1\le v\le 5,\\
    q_2+1 & \text{if } 6\le v\le r_2+5,\\
    q_2 & \text{if } r_2+6\le v\le 10,
\end{cases}
\end{equation*}
for all $v\in A\cup B$. Note that $K_{5,5}^{(a)} = K_{n,m}$. Define $p: A\cup B \rightarrow \mathbb{R}^2$ by
\begin{equation*}
p(v) = 
\begin{cases}
    (0, 0) & \text{if $v=1$},\\
    (-21.1521153004, 46.3456374209) & \text{if $v=2$},\\
    (0.1564190972, 0.421467513) & \text{if $v=3$},\\ 
    (-67.6050650164, 24.0271141104) & \text{if $v=4$},\\
    (0.0496284145, 0.1312332404) & \text{if $v=5$},\\ 
    (-0.0000017871, 0.0000006617) & \text{if $v=6$},\\ 
    (-21.1521097826, 46.3456444883) & \text{if $v=7$},\\ 
    (0.1564134151, 0.4214695353) & \text{if $v=8$},\\ 
    (-67.6050626001, 24.0271500987) & \text{if $v=9$},\\
    (0.0496445695, 0.131227325) & \text{if $v=10$}.
\end{cases}
\end{equation*}
By computer calculation, it can be checked that
\[
\min \left\{\lambda_{4}(L(K_{5,5},p)), \, \min_{v\in A\cup B} \lambda_1(L^v(K_{5,5},p))\right\} \ge 1.39.
\]
Let
\[
    h(a) = \frac{ \min_{\{u,v\}\in E(K_{5,5})} a(u)\cdot a(v)}{\max_{w\in V(K_{5,5})} a(w)}\ge \frac{q_1\cdot q_2}{\max\{q_1+1,q_2+1\}}\ge \min\{q_1,q_2\} - 1,
\]
and
\[
    g(a)= \min_{v\in V(K_{5,5})} a(v) = \min\{q_1,q_2\}.
\]
Then, by Lemma \ref{lemma:eigenvalue_bound}, we obtain
\[
    a_2(K_{n,m})\ge 1.39\cdot (\min\{q_1, q_2\}-1) \ge 1.39\cdot \left(\min\left\{\frac{n}{5}, \frac{m}{5}\right\}-2\right) = 0.278\cdot \min\{n,m\} - 2.78.
\]

Next, we deal with the $d=3$ case. As before, let $A=\{1,2,3,4,5\}$ and $B=\{6,7,8,9,10\}$ be the partite sets of $K_{5,5}$. 
We define $p:A\cup B\rightarrow\mathbb{R}^3$ by
\begin{equation*}
p(v) = 
\begin{cases}
    (0, 0, 0) & \text{if $v=1$},\\
    (2.7293408266, -5.6302802394, 5.4822881749) & \text{if $v=2$},\\
    (2.06615403, -1.6992607629, 0.2852673824) & \text{if $v=3$},\\ 
    (8.7625036487, -3.3438898813, -1.0943816281) & \text{if $v=4$},\\
    (5.9302132437, 3.0124055593, 6.0022584873) & \text{if $v=5$},\\ 
    (2.8954859979, -5.2351324955, 5.5842950438) & \text{if $v=6$},\\ 
    (6.197393444, 2.9766904751, 6.27433625) & \text{if $v=7$},\\ 
    (6.96976192, -5.2762513597, 2.3713993733) & \text{if $v=8$},\\ 
    (0.7519782126, 0.1600313582, -0.8109033932) & \text{if $v=9$},\\
    (8.3720044593, -1.2879449195, 1.0858012249) & \text{if $v=10$}.
\end{cases}
\end{equation*}
By computer calculation, it can be checked that
\[
    \min \left\{\lambda_{7}(L(K_{5,5},p)), \, \min_{v\in A\cup B} \lambda_1(L^v(K_{5,5},p))\right\} \ge 0.309.
\]
Applying the same argument as before, we obtain
\[
    a_3(K_{n,m})\geq 0.0618 \cdot \min\{n,m\} - 0.618.
\]
\end{proof}

\begin{remark}
    The decision to use the graph $K_{5,5}$ in the proof of Theorem \ref{thm:knm_small_d} was, in a sense, arbitrary. 
    It is possible to find (with the help of a computer) good embeddings of larger complete bipartite graphs, which would provide, by repeating the arguments in the proof of Theorem \ref{thm:knm_small_d},  better lower bounds for $a_2(K_{n,m})$ and $a_3(K_{n,m})$ (for large enough $n$ and $m$).  However, since these improved bounds would still be suboptimal, and for the sake of simplicity, we chose to use the relatively small graph $K_{5,5}$ instead. Similarly, note that it is possible to prove, using the same method, explicit lower bounds for $a_d(K_{n,m})$ for any relatively small value of $d$. Again, for the sake of simplicity, we chose to focus on the cases $d=2$ and $d=3$.
\end{remark}

\subsection{The complete bipartite graph $K_{3,3}$}\label{sec:k33}

In this section we prove Theorem \ref{thm:k33_tight}, giving a conjecturally tight lower bound on the $2$-dimensional algebraic connectivity of $K_{3,3}$. The proof relies on the analysis of the limiting behavior of certain sequence of embeddings of $K_{3,3}$ in the plane, as described next.

Let $K_{3,3}=(V,E)$ be the complete graph with $3$ vertices on each side. For convenience, let $V = [6]$ be its vertex set and 
$
E = \{\{i,j\} :\, 1\le i,j\le 6,\, i \text{ odd},\, j \text{ even}\}
$ be its edge set. For brevity, we occasionally denote an edge $\{i,j\}\in E$, with $i<j$, by $ij$.

Let $0< \alpha,\beta< \pi/2$ and $c>0$. We define $p_{\alpha,\beta,c}: [6]\to \Rea^2$ as follows (see Figure \ref{fig:p_alpha_beta}).
\[
p_{\alpha,\beta,c}(1) = c\begin{pmatrix} \cos\alpha \\ 0 \end{pmatrix}, \quad
p_{\alpha,\beta,c}(3) = c\begin{pmatrix} 0 \\ \sin\alpha \end{pmatrix}, \quad
p_{\alpha,\beta,c}(5) = c\begin{pmatrix} 0 \\ -\sin\alpha \end{pmatrix},
\]
and
\[
p_{\alpha,\beta,c}(2) = p_{\alpha,\beta,c}(1)+ \begin{pmatrix} 1 \\ 0 \end{pmatrix}, \quad
p_{\alpha,\beta,c}(4) = p_{\alpha,\beta,c}(3)+\begin{pmatrix} \cos\beta \\ \sin\beta \end{pmatrix}, \quad
p_{\alpha,\beta,c}(6) = p_{\alpha,\beta,c}(5)+\begin{pmatrix} \cos\beta \\ -\sin\beta \end{pmatrix}.
\]

\begin{figure}
    \centering
    \begin{tikzpicture}

\coordinate (A) at (0.5,0);  
\coordinate (B) at (-2.3,-1.5);  
\coordinate (C) at (-2.3,1.5);  

\coordinate (D) at (1.5,0);  
\coordinate (E) at (-1.3,2.3);  
\coordinate (F) at (-1.3,-2.3);  
\coordinate (X) at (-2.3,0);  
\coordinate (Y) at (-1.3,1.5);  

\fill (A) circle (2pt);  
\fill (B) circle (2pt);  
\fill (C) circle (2pt);  
\fill (D) circle (2pt);  
\fill (E) circle (2pt);  
\fill (F) circle (2pt);  

\node at (A) [below,font=\small] {1};
\node at (B) [below left,font=\small] {5};
\node at (C) [above left,font=\small] {3};
\node at (D) [below right,font=\small] {2};
\node at (E) [above left,font=\small] {4};
\node at (F) [below left,font=\small] {6};

\draw[] (A) -- (D);  
\draw[] (C) -- (E);  
\draw[] (B) -- (F);  
\draw[] (A) -- (E);  
\draw[] (A) -- (F);  
\draw[] (D) -- (C);  
\draw[] (D) -- (B);  
\draw[] (C) -- (F);  
\draw[] (E) -- (B);  

\draw[dashed] (X) -- (C) node[midway, left,font=\small] {};  
\draw[dashed] (A) -- (C) node[midway, left,font=\small] {};  

\draw[dashed] (X) -- (A) node[midway, left] {};  
\draw[dashed] (Y) -- (C) node[midway, left] {};  
\draw[dashed] (Y) -- (E) node[midway, right,font=\small] {};  

\pic [draw, -, "$\alpha$", angle eccentricity=1.3, font=\small] {angle = C--A--X};
\pic [draw, -, "$\beta$", angle eccentricity=1.3, font=\small] {angle = Y--C--E};

\end{tikzpicture}
    \caption{The embedding $p_{\alpha,\beta,c}$}
    \label{fig:p_alpha_beta}
\end{figure}
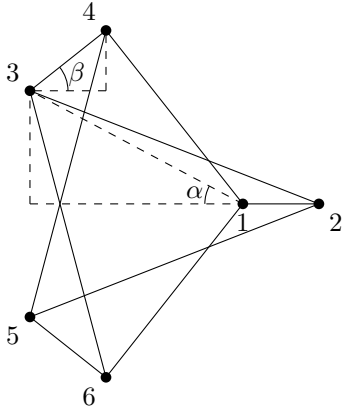

Fix $0< \alpha,\beta< \pi/2$. Let $\lminusva{}{\alpha,\beta}$ be the entry-wise limit of the matrices $\lminusva{}{}(K_{3,3},p_{\alpha,\beta,c})$ as $c$ tends to infinity. 
Let $e,e'\in E$ such that $|e\cap e'|=1$.
It is not hard to check that
\begin{equation}\label{eq:angles_limit}
    \lim_{c\to\infty} \theta_{p_{\alpha,\beta,c}}(e,e')=\begin{cases}
        \alpha & \text{if } \{e,e'\}\in \left\{ \{12,23\},\{12,25\} \right\},     \\
        \pi-\alpha & \text{if } \{e,e'\}\in\left\{ \{12,14\},\{12,16\} \right\}, \\ 
        2\alpha & \text{if } \{e,e'\}\in\left\{ \{14,16\},\{23,25\}   \right\}, \\ 
        \pi/2-\alpha  & \text{if } \{e,e'\}\in\left\{ \{14,45\},\{25,45\},\{16,36\},\{23,36\}   \right\}, \\ 
        \pi/2-\beta  & \text{if } \{e,e'\}\in\left\{ \{34,45\},\{36,56\}   \right\}, \\ 
        \pi/2+ \beta & \text{if } \{e,e'\}\in\left\{ \{34,36\},\{45,56\} \right\}, \\ 
        \alpha+\beta & \text{if } \{e,e'\}\in\left\{ \{23,34\},\{25,56\}  \right\}, \\ 
        \pi-\alpha-\beta  & \text{if } \{e,e'\}\in\left\{ \{14,34\},\{16,56\}  \right\}.
    \end{cases}
\end{equation}

Let $a=\sin(\alpha)$ and $b=\sin(\beta)$. Let $f(a,b)= \sqrt{(1-a^2)(1-b^2)}-ab$. 
By Lemma \ref{lemma:lower_weighted_stiffness}, \eqref{eq:angles_limit}, and standard trigonometric identities, we obtain 
\[
    \lminusva{}{\alpha,\beta}=\begin{pmatrix}
        2 & 0 & 0 & 0 & 0 & -\sqrt{1-a^2} &-\sqrt{1-a^2} &\sqrt{1-a^2}  &\sqrt{1-a^2}\\
        0 & 2 & 0 & -b & b & -f(a,b) & 0 & f(a,b) & 0\\
        0 & 0 & 2 & b & -b & 0 & -f(a,b) & 0 & f(a,b)\\
        0 & -b & b & 2 & 0 & 0 & a & a & 0\\
        0 & b &-b & 0 &2 & a & 0 & 0& a\\
        -\sqrt{1-a^2} & -f(a,b) & 0 & 0 &a & 2 & 1-2a^2 & 0 & 0\\
        -\sqrt{1-a^2} & 0 & -f(a,b) & a & 0 & 1-2a^2 &2 & 0 & 0\\
        \sqrt{1-a^2} & f(a,b) & 0 & a& 0 &0 & 0 &2 & 1-2a^2\\
        \sqrt{1-a^2} & 0 & f(a,b) & 0& a & 0 &0 & 1-2a^2 & 2
    \end{pmatrix},
\]
where the rows and columns are indexed by the edges of $K_{3,3}$, ordered as:  $12, 34, \allowbreak 56, 36,\allowbreak 45,\allowbreak 14, \allowbreak 16, 23, 25$.

\begin{lemma}\label{lemma:k33_ab_spectrum}
Let $0<\alpha,\beta<\pi/2$. Let $a=\sin(\alpha)$ and $b=\sin(\beta)$, and let $f(a,b)=\sqrt{(1-a^2)(1-b^2)}-ab$. Then, 
\[
    \Spec(\lminusva{}{\alpha,\beta})= \{2(1-a^2),1+2a^2,2,3,\theta_{1,a,b},\theta_{2,a,b},\eta_{1,a,b},\eta_{2,a,b},\eta_{3,a,b}\},
\]
where $\theta_{1,a,b},\theta_{2,a,b}$ are the roots of the polynomial
\[
   p_{1,a,b}(x)= x^2 + (2a^2-5)x -2f(a,b)^2+2,
\]
and $\eta_{1,a,b},\eta_{2,a,b},\eta_{3,a,b}$ are the roots of the polynomial
\[
p_{2,a,b}(x)= x^3 - (2a^2+5)x^2 +2(3a^2-2b^2-f(a,b)^2+4)x + 8a^2 (b^2-1).
\]
\end{lemma}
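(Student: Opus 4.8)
The plan is to compute the characteristic polynomial of the explicit $9\times 9$ matrix $\lminusva{}{\alpha,\beta}$ by exploiting the symmetry visible in its block structure. Inspecting the matrix, the map swapping the edge pairs $(36\leftrightarrow 45)$, $(14\leftrightarrow 16)$ together with $(23\leftrightarrow 25)$ (equivalently, the reflection of the picture exchanging vertices $3\leftrightarrow 5$ and $4\leftrightarrow 6$) is an automorphism, since it preserves all the angle relations in \eqref{eq:angles_limit}. Concretely, order the edges as $12,34,56,36,45,14,16,23,25$ as in the displayed matrix; the involution $\sigma$ fixes $12$, swaps $34\leftrightarrow 56$, $36\leftrightarrow 45$, $14\leftrightarrow 16$, $23\leftrightarrow 25$. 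First I would verify that $\sigma$ commutes with $\lminusva{}{\alpha,\beta}$ by checking entries, and then decompose $\Rea^9$ into the $+1$-eigenspace of $\sigma$ (dimension $5$, spanned by $e_{12}$ and the symmetric combinations $e_{34}+e_{56}$, $e_{36}+e_{45}$, $e_{14}+e_{16}$, $e_{23}+e_{25}$) and the $-1$-eigenspace (dimension $4$, spanned by the antisymmetric combinations). The matrix $\lminusva{}{\alpha,\beta}$ restricts to each invariant subspace, giving a $5\times 5$ block and a $4\times 4$ block whose eigenvalues together form $\Spec(\lminusva{}{\alpha,\beta})$.

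Next I would carry out the change of basis on each block. On the $4$-dimensional antisymmetric block, writing $u_1=(e_{34}-e_{56})/\sqrt2$, $u_2=(e_{36}-e_{45})/\sqrt2$, $u_3=(e_{14}-e_{16})/\sqrt2$, $u_4=(e_{23}-e_{25})/\sqrt2$, the off-diagonal couplings to $e_{12}$ cancel (since $e_{12}$ couples symmetrically to $14,16$ and to $23,25$), so this block decouples from the $e_{12}$ coordinate. A direct computation of the $4\times 4$ matrix, using the entries $\pm b, \pm a, \pm f(a,b), 1-2a^2, \pm\sqrt{1-a^2}$ from the displayed matrix and collecting terms, should yield a matrix whose characteristic polynomial factors as $(x-2(1-a^2))\cdot q(x)$ — in fact I expect one antisymmetric eigenvector to be an exact eigenvector with eigenvalue $2(1-a^2)$, and the remaining $3\times 3$ piece to have characteristic polynomial $p_{2,a,b}(x)$ after simplification via $\sqrt{1-a^2}\cdot\sqrt{1-b^2}=f(a,b)+ab$. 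Similarly, on the $5$-dimensional symmetric block I would look for $1+2a^2$, $2$, $3$ as eigenvalues coming from simple eigenvectors (the value $1+2a^2=2-(1-2a^2)$ and $3=2+1$ strongly suggest eigenvectors supported on $\{14,16\}$ or $\{23,25\}$ alone, using the $1-2a^2$ coupling and row-sum $2$), leaving a $2\times 2$ residual block with characteristic polynomial $p_{1,a,b}(x)$.

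The verification that the two residual blocks have exactly the stated characteristic polynomials $p_{1,a,b}$ and $p_{2,a,b}$ is the bulk of the work, but it is routine: expand the relevant $2\times 2$ determinant and $3\times 3$ determinant, substitute $1-a^2 = f(a,b)^2 + 2ab f(a,b) + a^2 b^2 \cdot(\dots)$ — more precisely use the identity $(f(a,b)+ab)^2 = (1-a^2)(1-b^2)$ repeatedly to eliminate square roots — and match coefficients. The main obstacle I anticipate is purely bookkeeping: correctly identifying which simple eigenvectors account for $2(1-a^2)$, $1+2a^2$, $2$, and $3$, and making sure the residual $2\times 2$ and $3\times 3$ blocks are extracted in a basis where the computation stays clean; once the invariant-subspace decomposition is set up, no genuinely hard step remains. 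An alternative, if the symmetry reduction proves messy, is to simply compute $\det(xI - \lminusva{}{\alpha,\beta})$ directly (by computer algebra) and factor it, checking the factorization against $(x-2(1-a^2))(x-(1+2a^2))(x-2)(x-3)\,p_{1,a,b}(x)\,p_{2,a,b}(x)$; since the claim is an identity in the two variables $a,b$, verifying it reduces to a finite polynomial identity check.
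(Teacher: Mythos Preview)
Your approach is essentially the same as the paper's: the paper simply writes down an explicit change-of-basis matrix $P$ and checks that $P^{-1}\lminusva{}{\alpha,\beta}P$ is block-diagonal with blocks of sizes $1,1,1,1,2,3$, and the columns of $P$ are precisely (up to scaling) symmetric and antisymmetric combinations under your involution $\sigma$. So the symmetry reduction you propose is exactly how one would discover the paper's $P$.

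There is, however, a concrete bookkeeping error in your predictions that you should fix before carrying out the computation. The eigenvalue $1+2a^2$ lies in the \emph{antisymmetric} block (eigenvector $e_{14}-e_{16}+e_{23}-e_{25}$), not the symmetric one, and $2(1-a^2)$ lies in the \emph{symmetric} block (eigenvector $e_{36}+e_{45}-a(e_{14}+e_{16}+e_{23}+e_{25})$), not the antisymmetric one. Consequently the antisymmetric block splits as $(1+2a^2)$ plus the $3\times 3$ block with characteristic polynomial $p_{2,a,b}$, while the symmetric block splits as $2(1-a^2),\,3,\,2$ plus the $2\times 2$ block with characteristic polynomial $p_{1,a,b}$. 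Your heuristic ``$1+2a^2=2-(1-2a^2)$ suggests an eigenvector supported on $\{14,16\}$ or $\{23,25\}$'' is on the right track, but the relevant combination is the antisymmetric one on all four of those edges. Once you swap these two assignments, the rest of your plan goes through exactly as the paper's proof does.
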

\begin{proof}
Let
\[
    P=\begin{pmatrix}
        0 & 0 & 0 & f(a,b) & 2\sqrt{1-a^2} & 0 & 0 & 0 & 0 \\
        0 & 0 & 0 & -\sqrt{1-a^2} & f(a,b) & 0 & 1 & 0 & 0 \\
        0 & 0 & 0 & -\sqrt{1-a^2} & f(a,b) & 0 & -1 & 0 & 0 \\
        1 & 2a & 0 & 0 &0 &0 & 0 & 1 & 0\\
        1 & 2a & 0 & 0 &0 &0 &0 &-1 & 0 \\
        -a & 1 & 1  & 0 &0 & 1 & 0 & 0 &1\\
        -a & 1 & -1 & 0 &0 &1 & 0 & 0 &-1 \\
        -a & 1 & 1 & 0 & 0& -1 & 0 & 0 &-1\\
        -a & 1 & -1 & 0 & 0 &-1 &0 &0 & 1
    \end{pmatrix}.
\]
It is easy to verify that $P$ is non-singular. 
Let $B$ be the basis of $\Rea^{|E|}$ consisting of the columns of $P$ (where, as before, we order the edges of $K_{3,3}$ as $12, 34, \allowbreak 56, 36,\allowbreak 45,\allowbreak 14, \allowbreak 16, 23, 25$). Then, it is not hard to check that the matrix representation of $\lminusva{}{\alpha,\beta}$ with respect to $B$ is
\[
    P^{-1} \lminusva{}{\alpha,\beta} P = \begin{pmatrix}
        2(1-a^2) & 0 &0 &0 &0 &0 &0 &0 &0 \\
        0 & 3 & 0 &0 &0 &0 &0 &0 &0\\
        0 & 0 & 1+2a^2 & 0 &0 &0 &0 &0 &0\\
        0 &0 &0 & 2 & 0 &0 &0 &0 &0\\
        0 &0 &0 &0 & 2 & -2 & 0 &0 &0\\
        0 &0 &0 &0 & 2a^2-2-f(a,b)^2 & 3-2a^2 & 0 &0 &0\\
        0 & 0 &0 &0 &0 &0 & 2 & -2b & -2f(a,b)\\
        0 & 0 &0 &0 &0 &0 &-2b & 2 &-2a\\
        0 & 0 &0 &0 &0 &0 & -f(a,b) & -a & 2a^2+1
    \end{pmatrix}.
\]
Hence, the spectrum of $\lminusva{}{\alpha,\beta}$ consists of the eigenvalues $2(1-a^2), 3, 1+2a^2, 2$, together with the spectra of the two matrices
\[
    M_{1,a,b} =\begin{pmatrix}
         2 & -2 \\ 2a^2-2-f(a,b)^2 & 3-2a^2
    \end{pmatrix}
\]
and
\[
    M_{2,a,b}= \begin{pmatrix}
        2 & -2b & -2f(a,b)\\
        -2b & 2 &-2a\\
        -f(a,b) & -a & 2a^2+1
    \end{pmatrix}.
\]
Finally, the claim follows by noticing that the characteristic polynomial of $M_{1,a,b}$ is
\[
   p_{1,a,b}(x)= x^2 + (2a^2-5)x -2f(a,b)^2+2,
\]
and the characteristic polynomial of $M_{2,a,b}$ is
\[
p_{2,a,b}(x)= x^3 - (2a^2+5)x^2 +2(3a^2-2b^2-f(a,b)^2+4)x + 8a^2 (b^2-1).
\]
\end{proof}

In order to obtain the best possible lower bound on $a_2(K_{3,3})$, we need to identify the values of $\alpha$ and $\beta$ (or equivalently,  $a$ and $b$) for which the smallest eigenvalue of $\lminusva{}{\alpha,\beta}$ is maximized.
Numerical experiments suggest that the smallest eigenvalue of $\lminusva{}{\alpha,\beta}$ is maximized when $a\approx 0.830893$ and $b\approx 0.314632$, and moreover that in this case $2(1-a^2)$ is a root of both characteristic polynomials $p_{1,a,b}(x)$ and $p_{2,a,b}(x)$. Thus motivated, we proceed to find the exact values of $0<a,b<1$ for which $p_{1,a,b}(2(1-a^2))=p_{2,a,b}(2(1-a^2))=0$ and analyze the eigenvalues of $\lminusva{}{\alpha,\beta}$ in this case.

\begin{lemma}\label{lemma:k33_optimal_a_b}
   Let $\lambda\approx 0.6903845$ be the unique positive real root of the polynomial $176 x^4-200 x^3+47 x^2+18 x-9$. Let $a_0=\sqrt{\lambda}\approx 0.830893$ and $b_0=\sqrt{ 6 a_0^4 - 8a_0^2 +3 +2a_0(a_0^2 - 1)\sqrt{9a_0^2 - 6}}\approx 0.314632$. Let $\alpha_0=\sin^{-1}(a_0)$ and $\beta_0=\sin^{-1}(b_0)$. Then, 
   \[
        \lambda_1(\lminusva{}{\alpha_0,\beta_0}) = 2(1-a_0^2)=2(1-\lambda)\approx 0.6192309.
   \]
\end{lemma}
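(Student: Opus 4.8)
Here is how I would prove Lemma~\ref{lemma:k33_optimal_a_b}.

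The plan is to read off the nine eigenvalues of $\lminusva{}{\alpha_0,\beta_0}$ from Lemma~\ref{lemma:k33_ab_spectrum} and show that the smallest one equals $\mu_0:=2(1-\lambda)=2(1-a_0^2)$. Set $\lambda=a_0^2$ and $w=b_0^2$. By Lemma~\ref{lemma:k33_ab_spectrum}, $\Spec(\lminusva{}{\alpha_0,\beta_0})$ is the multiset $\{\mu_0,\,1+2\lambda,\,2,\,3\}$ together with the roots (with multiplicity) of $p_{1,a_0,b_0}$ and $p_{2,a_0,b_0}$; since $\mu_0$ is literally one of these nine eigenvalues, it is enough to show that \emph{every} eigenvalue is $\ge\mu_0$. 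First I would verify that Lemma~\ref{lemma:k33_ab_spectrum} applies, i.e.\ that $0<a_0,b_0<1$. Evaluating $176x^4-200x^3+47x^2+18x-9$ at $x=2/3$ and at $x=3/4$ gives values of opposite sign, so the unique positive root satisfies $\lambda\in(2/3,3/4)$; hence $a_0=\sqrt\lambda\in(0,1)$ and $9\lambda-6>0$, so $b_0$ is real. Rationalizing the inner radical in the definition of $b_0^2$ shows that $w$ is the smaller of the two roots of the monic quadratic $Q(x)=x^2-(12\lambda^2-16\lambda+6)\,x+(4\lambda-3)^2$; since $6\lambda^2-8\lambda+3$ is always positive (its discriminant $64-72$ is negative) and $4\lambda-3\ne 0$, both roots of $Q$ are positive, and $0<w<6\lambda^2-8\lambda+3<1$ on $(2/3,3/4)$.

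The crux is the identity $f(a_0,b_0)^2=3\lambda-2$. Expanding $f(a_0,b_0)^2=(1-\lambda)(1-w)+\lambda w-2\sqrt{\lambda w(1-\lambda)(1-w)}$, one sees that $f(a_0,b_0)^2=3\lambda-2$ is equivalent to $2\sqrt{\lambda w(1-\lambda)(1-w)}=3-4\lambda-w+2\lambda w$; squaring the latter is exactly the equation $Q(w)=0$ that $w=b_0^2$ satisfies, and its right-hand side $(3-4\lambda)+w(2\lambda-1)$ is positive for $\lambda\in(2/3,3/4)$, $w\in(0,1)$, so the unsquared identity holds. Substituting $f(a_0,b_0)^2=3\lambda-2$ and $\mu_0=2-2\lambda$ into the formula of Lemma~\ref{lemma:k33_ab_spectrum} gives $p_{1,a_0,b_0}(x)=x^2+(2\lambda-5)x+6(1-\lambda)=(x-\mu_0)(x-3)$, so its roots are $\mu_0$ and $3$, both $\ge\mu_0$. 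The remaining elementary eigenvalues also clear the bar: $1+2\lambda\ge\mu_0\iff 4\lambda\ge 1$, and $2,3\ge\mu_0$ trivially.

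It remains to control the roots of $p_{2,a_0,b_0}$. Using $f(a_0,b_0)^2=3\lambda-2$ once more, a short computation shows that $p_{2,a_0,b_0}(\mu_0)=0$ is equivalent to $w=\tfrac{4\lambda^3-5\lambda^2+6\lambda-3}{2(2\lambda-1)}$; equating this with the stated formula for $b_0^2$ and isolating the radical leads, after squaring, to the polynomial identity
\[
16(1-x)^2(2x-1)^2(9x^2-6x)-(20x^3-39x^2+22x-3)^2=(x-1)^2\bigl(176x^4-200x^3+47x^2+18x-9\bigr),
\]
which is verified by direct expansion. Since $\lambda$ is a root of the quartic, the left-hand side vanishes at $\lambda$; a sign check, using $20x^3-39x^2+22x-3=(4x-3)(5x-1)(x-1)$ (positive on $(2/3,3/4)$), then yields the unsquared equalities, so $p_{2,a_0,b_0}(\mu_0)=0$. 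Writing $p_{2,a_0,b_0}(x)=(x-\mu_0)\,q(x)$ with $q$ monic quadratic, I would compute $q(\mu_0)=p_{2,a_0,b_0}'(\mu_0)=4\bigl(-\lambda^2+5\lambda-2+2(1-\lambda)\sqrt{9\lambda^2-6\lambda}\bigr)>0$ (as $-\lambda^2+5\lambda-2>0$ on $(2/3,3/4)$), note that the two roots of $q$ are real because they lie among the eigenvalues of the symmetric matrix $\lminusva{}{\alpha_0,\beta_0}$, and observe that their average $(4\lambda+3)/2$ is strictly greater than $\mu_0=2-2\lambda$ (equivalently $8\lambda>1$). A monic quadratic with real roots that is positive at a point lying strictly to the left of the midpoint of its roots is positive throughout the interval to the left of both roots, so both roots of $q$ exceed $\mu_0$. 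Consequently all nine eigenvalues of $\lminusva{}{\alpha_0,\beta_0}$ are $\ge\mu_0$, and $\mu_0$ is attained, so $\lambda_1(\lminusva{}{\alpha_0,\beta_0})=\mu_0=2(1-\lambda)$.

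I expect the main obstacle to be the algebraic bookkeeping rather than anything conceptual: deriving the quadratic $Q$, establishing the degree-six polynomial identity above, and keeping careful track of the sign conditions that allow passing from squared to unsquared equalities --- it is precisely these sign conditions that require the sharper localization $\lambda\in(2/3,3/4)$ in place of the obvious $\lambda\in(0,1)$. A related subtlety is that $b_0$ corresponds to the \emph{smaller} root of $Q$ (the $-$ sign before the inner radical in the stated formula for $b_0^2$), which is exactly what makes those sign conditions come out right.
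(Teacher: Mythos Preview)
Your proof is correct and follows essentially the same route as the paper's: invoke Lemma~\ref{lemma:k33_ab_spectrum}, establish the key identity $f(a_0,b_0)^2=3\lambda-2$, and then verify that $\mu_0=2(1-\lambda)$ is the smallest among the nine listed eigenvalues. The one noteworthy difference is your handling of the two remaining roots of $p_{2,a_0,b_0}$: where the paper solves for them explicitly via Vieta's formulas and checks numerically that they exceed $\mu_0$, your derivative-plus-midpoint argument (using $q(\mu_0)=p_{2,a_0,b_0}'(\mu_0)>0$ together with the vertex of $q$ lying to the right of $\mu_0$) is cleaner and avoids the appeal to approximate values.
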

\begin{proof}

By Lemma \ref{lemma:k33_ab_spectrum}, $\lambda_1(\lminusva{}{\alpha_0,\beta_0})$ is either $2(1-a_0^2)$, the smallest root of $p_{1,a_0,b_0}(x)$, or the smallest root of $p_{2,a_0,b_0}(x)$,
where
\[
   p_{1,a_0,b_0}(x)= x^2 + (2a_0^2-5)x -2f(a_0,b_0)^2+2,
\]
and 
\[
p_{2,a_0,b_0}(x)= x^3 - (2a_0^2+5)x^2 +2(3a_0^2-2b_0^2-f(a_0,b_0)^2+4)x + 8a_0^2 (b_0^2-1).
\]
We will show that $\lambda_1(\lminusva{}{\alpha_0,\beta_0})=2(1-a_0^2)$. In fact, we will show that $2(1-a_0^2)$ is the smallest root of both $p_{1,a_0,b_0}(x)$ and $p_{2,a_0,b_0}(x)$.

First, we show that $2(1-a_0^2)$ is a root of $p_{1,a_0,b_0}(x)$. Note that, by Vieta's formulas, the sum of the two roots of $p_{1,a_0,b_0}(x)$ is $5-2a_0^2$. Hence, $2(1-a_0^2)$ is a root of $p_{1,a_0,b_0}(x)$ if and only if $3$ is a root. Solving $p_{1,a,b}(3)=0$, we obtain
\[
f(a,b)^2= 3a^2-2.
\]
Solving for $b$, we obtain the formula
\[
    b= \pm \sqrt{ 6 a^4 - 8a^2 +3 \pm 2a(a^2 - 1)\sqrt{9a^2 - 6} }.
\]
In particular, by the definition of $b_0$, we have  $f(a_0,b_0)=3a_0^2-2$, and therefore $p_{1,a_0,b_0}(3)=0$, as wanted.
Note that, since $2(1-a_0^2)<3$, the smallest root of $p_{1,a_0,b_0}(x)$ is $2(1-a_0^2)$.

Next, we want to show that $p_{2,a_0,b_0}(2(1-a_0^2))=0$. 
That is, we need to show that
\[
    (2(1-a_0^2))^3 -(2a_0^2+5)(2(1-a_0^2))^2 +2(3a_0^2-2b_0^2-f(a_0,b_0)^2+4)\cdot 2(1-a_0^2) + 8 a_0^2 (b_0^2-1)=0.
\]
Plugging in $f(a_0,b_0)=3a_0^2-2$ and $b_0^2=6 a_0^4 - 8a_0^2 +3 + 2a_0(a_0^2 - 1)\sqrt{9a_0^2 - 6}$, we obtain, after some simplification, the equation
\[
80 a_0^6 -156 a_0^4 +88 a_0^2 -12 +(32a_0^5-48 a_0^3+16 a_0)\sqrt{9a_0^2-6}=0.
\]
Noting that the polynomials $80a^6-156a^4+88a^2-12$ and $32a^5-48 a^3+16 a$ are both divisible by $a^2-1$, we obtain
\[
80a_0^4-76a_0^2+12 +(32a_0^3-16 a_0)\sqrt{9a_0^2-6}=0.
\]
Since $80a^4-76a^2+12\le 0$ and $32a^3-16a \ge 0$ for $1/\sqrt{2}\le a\le \sqrt{3}/2$ (and in particular for $a=a_0$), this is equivalent to
\[
(80 a_0^4 -76 a_0^2 +12)^2 =(32a_0^3-16 a_0)^2 (9a_0^2-6).
\]
Simplifying again, we obtain
\[
    -16 (176 a_0^8 - 200 a_0^6 + 47 a_0^4 + 18 a_0^2 - 9)=0.
\]
By the definition of $a_0$, this equation is indeed satisfied. Hence, $2(1-a_0^2)$ is a root of $p_{2,a_0,b_0}(x)$.

We are left to show that the other two roots of $p_{2,a_0,b_0}(x)$ are greater or equal than $2(1-a_0^2)$. We denote these two roots by $x_1$ and $x_2$. By Vieta's formulas, we have
\[
    \begin{cases}
        2(1-a_0^2)+x_1+x_2 = 2a_0^2+5,\\
        2(1-a_0^2) x_1 x_2 = 8a_0^2(1-b_0^2).
    \end{cases}
\]
Solving this system of equations, we obtain
\[
    x_1 = 2a_0^2+\frac{3}{2} -\frac{1}{2} \sqrt{-80 a_0^4 - 32 a_0^3 \sqrt{9 a_0^2 - 6} + 56 a_0^2 + 9}
\]
and
\[
    x_2 = 2a_0^2+\frac{3}{2} +\frac{1}{2} \sqrt{-80 a_0^4 - 32 a_0^3 \sqrt{9 a_0^2 - 6} + 56 a_0^2 + 9}.
\]
It is not hard to verify that, for $a_0\approx 0.830893$, $x_1,x_2> 2(1-a_0^2)$. Therefore, \[\lambda_1(\lminusva{}{\alpha_0,\beta_0})=2(1-a_0^2)=2(1-\lambda).\]
\end{proof}

\begin{proof}[Proof of Theorem \ref{thm:k33_tight}]
     Let $\lambda\approx 0.6903845$ be the unique positive real root of the polynomial $176 x^4-200 x^3+47 x^2+18 x-9$. Let $a_0=\sqrt{\lambda}\approx 0.830893$ and $b_0=\sqrt{ 6 a_0^4 - 8a_0^2 +3 +2a_0(a_0^2 - 1)\sqrt{9a_0^2 - 6}}\approx 0.314632$. Let $\alpha_0=\sin^{-1}(a_0)$ and $\beta_0=\sin^{-1}(b_0)$. By Lemma \ref{lemma:lower_stiffness_spectrum}, the continuity of eigenvalues, and Lemma \ref{lemma:k33_optimal_a_b}, we obtain
     \begin{align*}
     a_2(K_{3,3}) &\ge \sup_{c}\lambda_4(L(K_{3,3},p_{\alpha_0,\beta_0,c})) \ge \lim_{c\to\infty}\lambda_4(L(K_{3,3},p_{\alpha_0,\beta_0,c}))   \\ &= \lim_{c\to\infty} \lambda_1(\lminusva{}{}(K_{3,3},p_{\alpha_0,\beta_0,c}))= \lambda_1(\lminusva{}{\alpha_0,\beta_0})=2(1-\lambda).
     \end{align*}
\end{proof}

\subsubsection{A different embedding of $K_{3,3}$ in the plane}

Next, we examine an alternative embedding of $K_{3,3}$ in the plane. Although this embedding is not optimal in terms of its spectral gap,  the resulting stiffness matrix exhibits a curious half-integral spectrum, which may be of interest.

As before, denote the vertex set of $K_{3,3}$ by $V=[6]$ and its edge set by $E=\{\{i,j\}:\, 1\le i,j\le 6, \, i \text{ odd}, \, j \text{ even}\}$. 
Let $x_1$, $x_2$, $x_3$ be the vertices of an equilateral triangle in $\mathbb{R}^2$. Let $y_1$ be the midpoint between $x_1$ and $x_2$, $y_2$ be the midpoint between $x_2$ and $x_3$, and $y_3$ be the midpoint between $x_1$ and $x_3$. 
We define $\hat{p}: [6] \rightarrow \mathbb{R}^2$ (see Figure \ref{fig:1}) by
\begin{equation*}
\hat{p}(i) = 
\begin{cases}
    x_{(i+1)/2} & \text{if $i$ is odd,}\\
    y_{i/2} & \text{if $i$ is even.}\\
\end{cases}
\end{equation*}

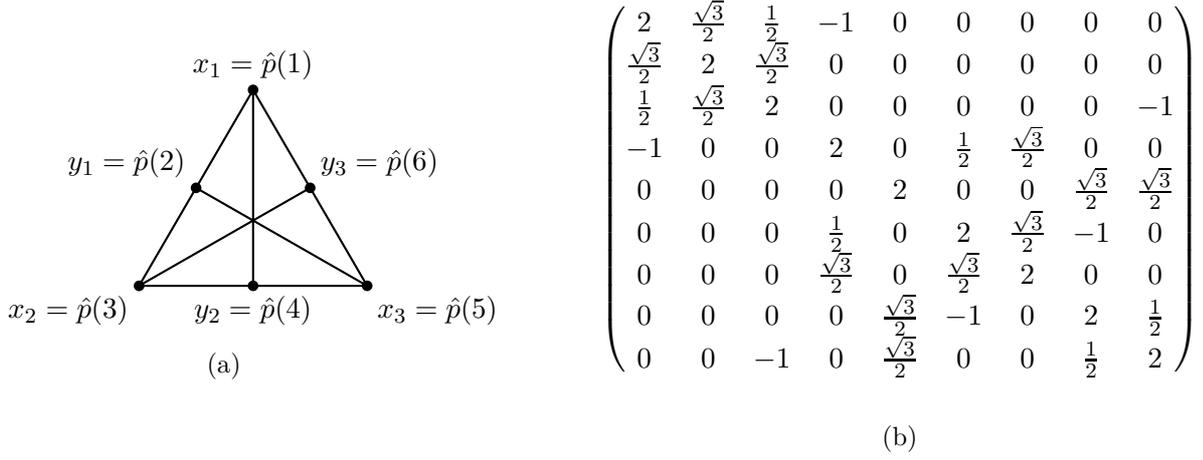
\begin{figure}[ht]
    \centering
    \begin{minipage}{0.35\textwidth}
        \centering
       \begin{tikzpicture}
    \coordinate (x1) at (0, {sqrt(27/4)});  
    \coordinate (x2) at (-1.5, 0);  
    \coordinate (x3) at (1.5, 0);   

    \coordinate (y1) at ($(x1)!0.5!(x2)$); 
    \coordinate (y2) at ($(x2)!0.5!(x3)$); 
    \coordinate (y3) at ($(x1)!0.5!(x3)$); 

    \draw[thick] (x1) -- (x2) -- (x3) -- cycle;
    
    \draw[thick] (y1) -- (x3);
    \draw[thick] (y2) -- (x1);
    \draw[thick] (y3) -- (x2);

    \fill (x1) circle (2pt);
    \node at (x1) [above] {$x_1=\hat{p}(1)$};
    
    \fill (x2) circle (2pt);
    \node at (x2) [below left] {$x_2=\hat{p}(3)$};
    
    \fill (x3) circle (2pt);
    \node at (x3) [below right] {$x_3=\hat{p}(5)$};
    
    \fill (y1) circle (2pt);
    \node at (y1) [above left] {$y_1=\hat{p}(2)$};
    
    \fill (y2) circle (2pt);
    \node at (y2) [below] {$y_2=\hat{p}(4)$};
    
    \fill (y3) circle (2pt);
    \node at (y3) [above right] {$y_3=\hat{p}(6)$};
    
\end{tikzpicture}\subcaption{}
    \end{minipage}  
    \hfill
    \begin{minipage}{0.6\textwidth}
        \centering
        \[
        \begin{pmatrix}
            2& \frac{\sqrt{3}}{2} & \frac{1}{2}  & -1  & 0 & 0 &0  & 0 & 0 \\
            \frac{\sqrt{3}}{2}& 2 & \frac{\sqrt{3}}{2}  &0  &0  & 0 &  0& 0 &0  \\
            \frac{1}{2} & \frac{\sqrt{3}}{2}  & 2 &0  & 0 & 0 & 0 &  0&-1  \\
           -1 & 0 & 0  & 2 & 0  & \frac{1}{2} & \frac{\sqrt{3}}{2} & 0  & 0 \\
            0& 0 & 0 &0  &2  & 0 & 0 &\frac{\sqrt{3}}{2}  & \frac{\sqrt{3}}{2} \\
           0 &0  &0  & \frac{1}{2} & 0 &2  & \frac{\sqrt{3}}{2} & -1  &0  \\
          0  &0  &0  & \frac{\sqrt{3}}{2}& 0 & \frac{\sqrt{3}}{2} &2  & 0 & 0 \\
          0  &0  & 0 & 0 & \frac{\sqrt{3}}{2} &  -1 &  0 &2  &\frac{1}{2}  \\
           0 &0  & -1 & 0 & \frac{\sqrt{3}}{2} & 0 & 0 &\frac{1}{2}  &2  \\
        \end{pmatrix}
        \]
        \subcaption{}
    \end{minipage}
    \caption{(a) The embedding $\hat{p}$. (b) The lower stiffness matrix $L^{\downarrow}(K_{3,3},\hat{p})$ (where the rows and columns are indexed by the edges in $E$, ordered lexicographically).}\label{fig:1}
\end{figure}

Let $M\in \Rea^{n \times n}$ be a symmetric matrix, and let $\lambda_1, \ldots, \lambda_k$ be its eigenvalues, with multiplicities $m_1, \ldots, m_k$ respectively. Then, we denote the spectrum of $M$ as 
\[
    \text{Spec}(M) = \{\lambda_1^{[m_1]}, \ldots, \lambda_k^{[m_k]}\}.
\]

\begin{proposition}\label{prop:k3k3spectrum}
    The spectrum of $L(K_{3,3}, \hat{p})$ is 
    \[
    \left\{0^{[3]},0.5^{[3]}, 1.5^{[2]}, 2.5^{[1]}, 3^{[1]}, 4^{[2]} \right\}.
    \]
\end{proposition}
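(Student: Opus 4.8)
The plan is to reduce to the $9\times 9$ matrix $L^{\downarrow}(K_{3,3},\hat p)$ exhibited in Figure \ref{fig:1}(b) and then diagonalize it using the symmetry of the embedding. Since $\hat p$ maps $K_{3,3}$ into $\Rea^2$ with $d|V|=12>|E|=9$, Lemma \ref{lemma:lower_stiffness_spectrum} (applied with $f\equiv 1$) gives $\lambda_i(L^{\downarrow}(K_{3,3},\hat p))=\lambda_{i+3}(L(K_{3,3},\hat p))$ for every $1\le i\le 9$; since both matrices are positive semidefinite and the kernel of $L(K_{3,3},\hat p)$ has dimension at least $\binom{3}{2}=3$, it follows that $\Spec\bigl(L(K_{3,3},\hat p)\bigr)$ is $\Spec\bigl(L^{\downarrow}(K_{3,3},\hat p)\bigr)$ together with the eigenvalue $0$ with multiplicity $3$. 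Thus it suffices to prove $\Spec\bigl(L^{\downarrow}(K_{3,3},\hat p)\bigr)=\{0.5^{[3]},1.5^{[2]},2.5^{[1]},3^{[1]},4^{[2]}\}$.

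Before computing, I would verify the entries of $L^{\downarrow}(K_{3,3},\hat p)$ shown in Figure \ref{fig:1}(b) via Lemma \ref{lemma:lower_weighted_stiffness}: the six points $x_1,x_2,x_3,y_1,y_2,y_3$ are pairwise distinct, so every diagonal entry equals $2$; the entry indexed by two edges meeting at a common vertex $u$ is the cosine of the angle between them at $u$, and disjoint edges give $0$. The needed cosines follow from elementary geometry of an equilateral triangle and its medial triangle: the two sides at a triangle vertex meet at $60^{\circ}$, so $\cos=\tfrac12$; a median bisects the $60^{\circ}$ angle at its apex into two angles of $30^{\circ}$, so $\cos=\tfrac{\sqrt3}{2}$, and it is perpendicular to the opposite side, so $\cos=0$; and the two half-sides at a midpoint point in opposite directions, so $\cos=-1$. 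This reproduces Figure \ref{fig:1}(b).

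The computational core is the diagonalization of $L^{\downarrow}:=L^{\downarrow}(K_{3,3},\hat p)$, which I would carry out using the symmetry group $S_3\cong D_3$ of the equilateral triangle: a triangle symmetry permutes the triangle vertices $\{1,3,5\}$ and correspondingly the midpoints $\{2,4,6\}$, making $L^{\downarrow}$ an $S_3$-equivariant operator on $\Rea^E$. The nine edges form two $S_3$-orbits, the three ``medians'' $O_3=\{\{1,4\},\{2,5\},\{3,6\}\}$ and the six ``flags'' $O_6=E\setminus O_3$, and as $S_3$-representations $\Rea^{O_3}\cong\mathbf 1\oplus V$ and $\Rea^{O_6}\cong\Rea[S_3]\cong\mathbf 1\oplus\operatorname{sgn}\oplus V^{\oplus 2}$, where $V$ denotes the $2$-dimensional standard representation. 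Hence $\Rea^E\cong\mathbf 1^{\oplus 2}\oplus\operatorname{sgn}\oplus V^{\oplus 3}$, and by Schur's lemma $L^{\downarrow}$ decomposes into a $2\times 2$ block on the trivial isotypic component (conveniently spanned by the constant-on-orbit vectors $u_{O_3}$ and $u_{O_6}$), a $1\times 1$ block on the sign component, and a $3\times 3$ block acting with multiplicity $2$ on the standard isotypic component. Evaluating $L^{\downarrow}$ on $u_{O_3}$ and $u_{O_6}$, on an explicit generator of the sign component, and on a convenient generator of one copy of $V$ yields that these blocks have spectra $\{0.5,3\}$, $\{2.5\}$, and $\{0.5,1.5,4\}$, respectively; assembled with the multiplicity-$2$ weighting on the standard part, these give exactly the claimed spectrum. (Equivalently, one may compute the characteristic polynomial of the $9\times 9$ matrix directly and factor it as $(x-\tfrac12)^3(x-\tfrac32)^2(x-\tfrac52)(x-3)(x-4)^2$, or simply exhibit a complete list of nine eigenvectors by hand.)

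The main obstacle is this last step: even after the symmetry reduction one must set up the isotypic decomposition of $\Rea^E$ carefully and correctly compute how $L^{\downarrow}$ restricts to the standard-representation block --- the $3\times 3$ matrix whose eigenvalues are $\tfrac12,\tfrac32,4$. Everything else (the reduction to $L^{\downarrow}$ via Lemma \ref{lemma:lower_stiffness_spectrum}, and the verification of the matrix entries from triangle geometry) is routine.
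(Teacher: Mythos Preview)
Your approach is correct, and the representation-theoretic decomposition you outline does indeed work: the trivial block on $\operatorname{span}\{u_{O_3},u_{O_6}\}$ has trace $7/2$ and determinant $3/2$, giving eigenvalues $3$ and $1/2$; the sign component is spanned by the vector that is $+1$ on $\{12,34,56\}$, $-1$ on $\{16,23,45\}$, and $0$ on the medians, with eigenvalue $5/2$; and the three copies of the standard representation then absorb the remaining eigenvalues $1/2,\,3/2,\,4$, each with multiplicity two.

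The paper proceeds differently: after the same reduction to $\lminusva{}{}(K_{3,3},\hat p)$ via Lemma~\ref{lemma:lower_stiffness_spectrum} and the same angle computation, it simply \emph{exhibits} nine explicit eigenvectors $\psi_{2.5},\psi_3,\psi_{1.5},\psi_{1.5}',\psi_4,\psi_4',\psi_{0.5},\psi_{0.5}',\psi_{0.5}''$ and verifies each one using Lemma~\ref{lemma:lowerlaplacian}. The symmetry is present but only implicitly: the primed vectors are defined as cyclic shifts (under $i\mapsto h(i+2)$) of the unprimed ones, and in hindsight $\psi_3$ lies in your trivial component, $\psi_{2.5}$ spans your sign component, and the pairs $(\psi_{1.5},\psi_{1.5}')$, $(\psi_4,\psi_4')$ each span one copy of $V$. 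Your approach explains \emph{a priori} why the multiplicities come out as they do and reduces the computation to a $2\times2$, a $1\times1$, and a $3\times3$ block; the paper's approach avoids any representation theory and is entirely self-contained, at the cost of having to guess the eigenvectors (or find them numerically first) and then verify nine separate eigenvalue equations by hand.
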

\begin{proof}
Denote $\lminusva{}{} = \lminusva{}{}(K_{3,3}, \hat{p})$. By Lemma \ref{lemma:lower_stiffness_spectrum}, the claim is equivalent to showing that the spectrum of $\lminusva{}{}$ is 

\[
\left\{0.5^{[3]}, 1.5^{[2]}, 2.5^{[1]}, 3^{[1]}, 4^{[2]} \right\}.
\]

Let $e,e'\in E$ with $|e\cap e'|=1$. It is easy to check that 
\begin{equation}\label{eq:k33_angles}
    \cos(\theta_{\hat{p}}(e,e'))=\begin{cases}
        0 & \text{if } \{e,e'\}\in \left\{ \{12,25\},\{23,25\},\{34,14\},\{45,14\},\{16,36\},\{56,36\}\right\},\\
        \frac{1}{2} & \text{if } \{e,e'\}\in\left\{ \{12,16\},\{45,56\},\{23,34\}\right\},\\
        -1 & \text{if } \{e,e'\}\in \left\{ \{12,23\},\{34,45\},\{56,16\}\right\},\\
        \frac{\sqrt{3}}{2} & \text{otherwise.}
    \end{cases}
\end{equation}

We proceed to find a basis of $\Rea^{|E|}$ consisting of eigenvectors of $\lminusva{}{}$. 

\textbf{Eigenvalue 2.5}: Let $\psi_{2.5} \in \mathbb{R}^{|E|}$ be defined by 
\begin{equation*}
(\psi_{2.5})_{e}=
\begin{cases}
    1 & \text{if } e\in \{12, 34, 56\},\\
    -1 & \text{if } e\in \{23, 45, 16\},\\
       0 & \text{otherwise,} 
\end{cases}
\end{equation*}
for all $e\in E$. It is easy to check using Lemma \ref{lemma:lowerlaplacian} and \eqref{eq:k33_angles} that $\psi_{2.5}$ is an eigenvector of $\lminusva{}{}$ with eigenvalue $2.5$.

\textbf{Eigenvalue 3}: Let $\psi_{3} \in \mathbb{R}^{|E|}$ be defined by 
\begin{equation*}
(\psi_{3})_{e}=
\begin{cases}
    \sqrt{3} & \text{if $e \in \{14, 25, 36\}$},\\
    1 & \text{otherwise}.\\
\end{cases}
\end{equation*}
It is easy to check using Lemma \ref{lemma:lowerlaplacian} and \eqref{eq:k33_angles} that $\psi_{3}$ is an eigenvector of $\lminusva{}{}$ with eigenvalue 3.

\textbf{Eigenvalue 1.5}: Let $\psi_{1.5} \in \mathbb{R}^{|E|}$ be defined by 
\begin{equation*}
(\psi_{1.5})_{e}=
\begin{cases}
    \sqrt{3} & \text{if $e = 36$},\\
    -\sqrt{3} & \text{if $e = 14$},\\
    1 & \text{if $e \in \{12,45\}$},\\
    -1 & \text{if $e \in \{23,56\}$},\\
    0 & \text{otherwise.}\\
\end{cases}
\end{equation*}

For $x\in\mathbb{N}$, let $h(x)$ be the unique number in $[6]$ that is congruent to $x$ modulo $6$. 
Let $\psi_{1.5}' \in \mathbb{R}^{|E|}$ be defined by
\[
(\psi_{1.5}')_{\{i,j\}} = (\psi_{1.5})_{\{h(i+2), h(j+2)\}},
\]
for all $\{i,j\}\in E$. 

It is easy to check using Lemma \ref{lemma:lowerlaplacian} and \eqref{eq:k33_angles} that $\psi_{1.5}$ and $\psi_{1.5}'$ are eigenvectors of $\lminusva{}{}$ with eigenvalue 1.5. Since $\psi_{1.5}$ and $\psi_{1.5}'$ are not scalar multiples of each other, the two vectors are linearly independent. 

\textbf{Eigenvalue 4}: Let $\psi_{4} \in \mathbb{R}^{|E|}$ be defined by 
\begin{equation*}
(\psi_{4})_{e}=
\begin{cases}
    -2\sqrt{3} & \text{if $e = 36$},\\
    \sqrt{3} & \text{if $e \in \{14,25\}$},\\
    3 & \text{if $e \in \{12, 45\}$},\\
    -4 & \text{if $e \in \{23, 34\}$},\\
    1 & \text{otherwise.}\\
\end{cases}
\end{equation*}

Let $\psi_{4}' \in \mathbb{R}^{|E|}$ be defined by
\[
(\psi_{4}')_{\{i,j\}} = (\psi_{4})_{\{h(i+2), h(j+2)\}},
\]
for all $\{i,j\}\in E$. It is easy to check using Lemma \ref{lemma:lowerlaplacian} and \eqref{eq:k33_angles} that $\psi_{4}$ and $\psi_{4}'$ are eigenvectors of $\lminusva{}{}$ with eigenvalue 4. Since $\psi_{4}$ and $\psi_{4}'$ are not multiples of each other, the two vectors are linearly independent. 

\textbf{Eigenvalue 0.5}: Let $\psi_{0.5} \in \mathbb{R}^{|E|}$ be defined by 
\begin{equation*}
(\psi_{0.5})_{e}=
\begin{cases}
    -2 & \text{if $e = 36$},\\
    -1 & \text{if $e \in \{14,25\}$},\\
    0 & \text{if $e \in \{16,56\}$},\\
    \sqrt{3} & \text{otherwise.}\\
\end{cases}
\end{equation*}

Let $\psi_{0.5}', \psi_{0.5}'' \in \mathbb{R}^{|E|}$ be defined by
\[
(\psi_{0.5}')_{\{i,j\}} = (\psi_{0.5})_{\{h(i+2), h(j+2)\}}
\]
and
\[
(\psi_{0.5}'')_{\{i,j\}} = (\psi_{0.5})_{\{h(i+4), h(j+4)\}},
\]
for all $\{i,j\}\in E$. It is easy to check using Lemma \ref{lemma:lowerlaplacian} and \eqref{eq:k33_angles} that $\psi_{0.5}$, $\psi_{0.5}'$ and $\psi_{0.5}''$ are eigenvectors of $\lminusva{}{}$ with eigenvalue 0.5. We are left to show that these vectors are linearly independent. 

Assume for the sake of contradiction that they are linearly dependent. Then, there exist scalars $\alpha, \beta, \gamma \in \mathbb R$, not all of them zero, such that
\[
\alpha\psi_{0.5} + \beta\psi_{0.5}' + \gamma\psi_{0.5}'' = 0.
\]
Plugging in $e = 56$, we obtain
\[
0=\alpha(\psi_{0.5})_{56} + \beta(\psi_{0.5}')_{56}+ \gamma(\psi_{0.5}'')_{56} = \sqrt{3}\beta + \sqrt{3}\gamma.
\]
Similarly, plugging in  $e=34$ and $e=12$, we obtain the equations
\[
    \sqrt{3}\alpha+\sqrt{3}\gamma=0
\]
and
\[
    \sqrt{3}\alpha+\sqrt{3}\beta=0,
\]
respectively. Solving this system of equations we obtain $\alpha = \beta = \gamma = 0$, a contradiction.

\end{proof}

\section{Generalized star graphs}\label{sec:star}

A graph $G=(V,E)$ is called \emph{minimally $d$-rigid} if it is $d$-rigid but, for all $e\in E$, the graph obtained from $G$ by removing the edge $e$ is not $d$-rigid. The following upper bound on the $d$-dimensional algebraic connectivity of minimally $d$-rigid graphs was proved in \cite{LNPR23+}.

\begin{proposition}[Lew, Nevo, Peled, Raz {\cite[Proposition 7.1]{LNPR23+}}]\label{prop:minimally_rigid}
  Let $d\ge 1$ and $n\ge d+1$, and let $T$ be a minimally $d$-rigid graph on $n$ vertices. Then, unless $d=1$ and $n=2$, or $d=2$ and $n=3$, 
\[
    a_d(T)\leq 1.
\]
\end{proposition}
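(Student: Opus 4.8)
The plan is to pass from the stiffness matrix $L(T,p)$ to the lower stiffness matrix $L^{\downarrow}(T,p)=R(T,p)^{\T}R(T,p)$, which is an $|E(T)|\times |E(T)|$ matrix whose Rayleigh quotients are easy to control directly. The key preliminary observation is that $T$ is \emph{isostatic}: $|E(T)|=dn-\binom{d+1}{2}$. Indeed, if $|E(T)|>dn-\binom{d+1}{2}$ then for a generic $p$ the columns of $R(T,p)$ (which span a space of dimension $dn-\binom{d+1}{2}$, since $T$ is $d$-rigid) are linearly dependent, so some edge can be deleted without decreasing the rank, contradicting minimality. Since then $|E(T)|<dn$, Lemma~\ref{lemma:lower_stiffness_spectrum} (applied with $f\equiv 1$) gives $\lambda_{\binom{d+1}{2}+1}(L(T,p))=\lambda_1(L^{\downarrow}(T,p))$ for every $p\colon V(T)\to\Rea^d$. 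Hence $a_d(T)=\sup_p \lambda_1(L^{\downarrow}(T,p))$, and it suffices to show $\lambda_1(L^{\downarrow}(T,p))\le 1$ for every $p$.

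Next I would set up a dichotomy on $T$. A $d$-rigid graph on at least $d+1$ vertices has minimum degree at least $d$ (a vertex of degree $<d$ supports a nontrivial infinitesimal flex that fixes all the other vertices). Combining this with the edge count $2|E(T)|=\sum_v\deg_T(v)$, if $T$ were $d$-regular we would get $dn=2\bigl(dn-\binom{d+1}{2}\bigr)$, forcing $n=d+1$ and $T=K_{d+1}$. So either $T=K_{d+1}$, or $T$ has a vertex $v_0$ with $\deg_T(v_0)\ge d+1$.

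In the second case, fix any $p$ and use a test vector supported on the edges incident to $v_0$. If $p(u)=p(v_0)$ for some neighbour $u$ of $v_0$, take $x\in\Rea^{|E(T)|}$ to be the indicator of $\{v_0,u\}$; then $R(T,p)x=0$, so $x^{\T}L^{\downarrow}(T,p)x=0$. Otherwise, the $\deg_T(v_0)\ge d+1$ vectors $\{d_{v_0 v}\}_{v\sim v_0}$ in $\Rea^d$ are linearly dependent, so there is a nonzero $x$ supported on the edges at $v_0$ with $\sum_{v\sim v_0}x_{\{v_0,v\}}d_{v_0 v}=0$. Using $(R(T,p)x)(w)=\sum_{v\sim w}x_{\{w,v\}}d_{wv}$ one computes $(R(T,p)x)(v_0)=0$, $(R(T,p)x)(u)=x_{\{v_0,u\}}d_{uv_0}$ for $u\sim v_0$, and $(R(T,p)x)(w)=0$ otherwise, so that $\|R(T,p)x\|^2=\sum_{u\sim v_0}x_{\{v_0,u\}}^2=\|x\|^2$. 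In both cases $x^{\T}L^{\downarrow}(T,p)x=\|R(T,p)x\|^2\le\|x\|^2$ with $x\neq 0$, hence $\lambda_1(L^{\downarrow}(T,p))\le 1$; taking the supremum over $p$ gives $a_d(T)\le 1$.

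It remains to treat $T=K_{d+1}$: for $d\ge 3$ we have $a_d(K_{d+1})=1$ by \cite{LNPR2023onthe}, while $d=1$ (where $T=K_2$) and $d=2$ (where $T=K_3$) are precisely the two excluded cases. I expect the localized computation and the dichotomy to be routine; the only genuinely external ingredient is the value $a_d(K_{d+1})=1$ for $d\ge 3$, and the only points that require some care are the isostatic edge count and the minimum-degree fact, which together single out $K_{d+1}$ as the unique exceptional graph.
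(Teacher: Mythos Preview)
Your argument is correct. The paper does not actually prove Proposition~\ref{prop:minimally_rigid}; it is quoted from \cite{LNPR23+} and used as a black box, so there is no in-paper proof to compare against. That said, your approach is the natural one and matches the spirit of the original: reduce via Lemma~\ref{lemma:lower_stiffness_spectrum} to bounding $\lambda_1(L^{\downarrow}(T,p))$ using the isostatic edge count $|E(T)|=dn-\binom{d+1}{2}$, then exhibit a test vector supported on the star of a vertex of degree at least $d+1$, whose existence is forced by the degree-sum identity together with the standard fact that $d$-rigid graphs on at least $d+1$ vertices have minimum degree $d$. The only residual case, $T=K_{d+1}$, is handled by the known value $a_d(K_{d+1})=1$ for $d\ge 3$ from \cite{LNPR2023onthe}, and the two small exceptions $K_2$ and $K_3$ are precisely those excluded in the statement. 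Your handling of the degenerate case $p(u)=p(v_0)$ is fine, though unnecessary: the vectors $d_{v_0 v}$ then include a zero vector and are automatically dependent, so the main argument already covers it.
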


Let $d\ge 1$ and $n\ge d+1$. Let $S_{n,d}$ be the graph on vertex set $[n]$ with edge set 
\[
E(S_{n,d}) = \left\{\{i,j\} : i\in [d], \, j\in [n]\setminus\{i\}\right\}.
\]
Note that $S_{n,1}$ is the star graph on $n$ vertices.  
It is easy to check that $S_{n,d}$ is minimally $d$-rigid. Let $e_1, e_2, \ldots, e_d$ be the standard basis vectors in $\Rea^d$. Let $p^* : [n] \rightarrow\mathbb R^d$ be defined by
\begin{equation*}
    p^*(i) = \begin{cases}
        e_i & \text{if $1\leq i \leq d$},\\
        0 & \text{otherwise.}
    \end{cases}
\end{equation*}

The spectrum of the stiffness matrix $L(S_{n,d},p^{*})$ was determined in \cite{LNPR23+}.

\begin{proposition}[Lew, Nevo, Peled, Raz {\cite[Proposition 7.4]{LNPR23+}}]\label{prop:star_graph}
    Let $d\ge 1$ and $n\ge d+1$. Then, the spectrum of $L(S_{n,d},p^*)$ is
\[
\left\{ 0^{\left[\binom{d+1}{2}\right]}, 1^{\left[dn-\binom{d+1}{2}-d\right]}, (n-d/2)^{[d-1]}, n^{[1]} \right\}.
\]
\end{proposition}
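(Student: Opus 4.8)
The plan is to realize $S_{n,d}$ as a blow-up of the complete graph $K_{d+1}$ and apply Theorem \ref{thm:blow_up} (in its lower-stiffness incarnation, Theorem \ref{thm:lower_stiffness_blow_up}). One observes that $S_{n,d}$ is exactly the $a$-blow-up of $K_{d+1}$ on the vertex set $\{1,\dots,d,*\}$, where $a(i)=1$ for $i\in[d]$, $a(*)=n-d$, and the block of $*$ is identified with $\{d+1,\dots,n\}$; moreover, under this identification $p^*=\blowup{p_0}{a}$ for the embedding $p_0$ of $K_{d+1}$ with $p_0(i)=e_i$ for $i\in[d]$ and $p_0(*)=0$. (Since $\{e_1,\dots,e_d,0\}$ is affinely independent, $(K_{d+1},p_0)$ is infinitesimally rigid, though this fact will not be needed directly.)

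Applying Theorem \ref{thm:lower_stiffness_blow_up}: the quantity $N=\sum_{\{u,v\}\in E(K_{d+1})}(a(u)-1)(a(v)-1)$ vanishes because every edge of $K_{d+1}$ meets $[d]$, and the local summands $\Spec(\lminusva{v}{a}(K_{d+1},p_0))^{[a(v)-1]}$ are empty for $v\in[d]$ since there $a(v)-1=0$. For $v=*$ the columns of $R^{*}_a(K_{d+1},p_0)$ are $\sqrt{a(i)}\,d_{*i}=-e_i$, $i\in[d]$, so $R^{*}_a(K_{d+1},p_0)=-I_d$ and hence $\lminusva{*}{a}(K_{d+1},p_0)=I_d$, contributing $\{1^{[d(n-d-1)]}\}$. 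Therefore
\[
\Spec\big(\lminusva{}{}(S_{n,d},p^*)\big)=\{1^{[d(n-d-1)]}\}\cup\Spec\big(\lminusva{}{a}(K_{d+1},p_0)\big).
\]
Once one knows (from the next step) that every eigenvalue on the right-hand side is positive, Lemma \ref{lemma:lower_stiffness_spectrum} together with the general bound $\rank R(S_{n,d},p^*)\le dn-\binom{d+1}{2}$ shows that $L(S_{n,d},p^*)$ has exactly $\binom{d+1}{2}$ zero eigenvalues and that its remaining ones are precisely those above; that is, $\Spec(L(S_{n,d},p^*))$ is the right-hand side with $\binom{d+1}{2}$ zeros adjoined.

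The crux — and the step I expect to demand the most care — is to prove
\[
\Spec\big(\lminusva{}{a}(K_{d+1},p_0)\big)=\{1^{[\binom d2]},\,(n-d/2)^{[d-1]},\,n^{[1]}\}.
\]
Using Lemma \ref{lemma:lower_weighted_stiffness} with the angle values $\cos(\theta_{p_0}(\{i,j\},\{i,l\}))=\tfrac12$, $\cos(\theta_{p_0}(\{i,j\},\{i,*\}))=\tfrac1{\sqrt2}$, and $\cos(\theta_{p_0}(\{i,*\},\{j,*\}))=0$ (for distinct centers $i,j,l$), one writes $\lminusva{}{a}(K_{d+1},p_0)$ out explicitly, with index set partitioned into the $\binom d2$ ``center--center'' edges $\{i,j\}$ and the $d$ ``center--apex'' edges $\{i,*\}$. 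The group $S_d$ acts, permuting the centers together with the coordinates of $\Rea^d$, and I would diagonalize $\lminusva{}{a}(K_{d+1},p_0)$ on each isotypic block: (i) on the subspace of vectors supported on center--center edges with $\sum_{l\ne i}\psi_{il}=0$ for all $i$, the matrix acts as the identity (this block has dimension $\binom d2-d$ for $d\ge3$); (ii) for each $x\in\Rea^d$ with $\sum_i x_i=0$, the plane $\{\,\psi:\psi_{\{i,j\}}=c(x_i+x_j),\ \psi_{\{i,*\}}=c'x_i\,\}$ is invariant, and the induced $2\times2$ operator has eigenvalues $1$ and $n-d/2$, so these occur with multiplicity $d-1$ each; (iii) on the plane of $S_d$-invariant vectors ($\psi$ constant on center--center edges and on center--apex edges), a $2\times2$ computation yields eigenvalues $1$ and $n$. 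Since $(\binom d2-d)+2(d-1)+2=\binom{d+1}{2}$, these blocks account for the entire space when $d\ge3$, and the degenerate small cases $d=1,2$ (where some blocks are trivial or collapse) are dispatched by inspecting the relevant $\le 3\times3$ matrix directly. Summing multiplicities proves the claim, and all eigenvalues are positive, as needed above.

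Finally, combining the two contributions and using the identity $d(n-d-1)+\binom d2=dn-\binom{d+1}{2}-d$ gives
\[
\Spec(L(S_{n,d},p^*))=\big\{0^{[\binom{d+1}{2}]},\ 1^{[\,dn-\binom{d+1}{2}-d\,]},\ (n-d/2)^{[d-1]},\ n^{[1]}\big\},
\]
as required.
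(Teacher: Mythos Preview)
Your overall strategy matches the paper's exactly: realize $(S_{n,d},p^*)$ as the $a$-blow-up of $(K_{d+1},p_0)$, apply Theorem~\ref{thm:lower_stiffness_blow_up} (noting $N=0$ and that only the $v=*$ local term survives, giving $\lminusva{*}{a}=I_d$), and reduce everything to computing $\Spec\bigl(\lminusva{}{a}(K_{d+1},p_0)\bigr)$.

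The two proofs diverge only in how that last spectrum is found. The paper writes $\lminusva{}{a}(K_{d+1},p_0)$ in the block form
\[
\begin{pmatrix}(k+1)I_d & \sqrt{k/2}\,A\\[2pt] \sqrt{k/2}\,A^{\T} & \tfrac12 A^{\T}A+I_{\binom{d}{2}}\end{pmatrix},\qquad k=n-d,
\]
with $A$ the $d\times\binom{d}{2}$ vertex--edge incidence matrix of $K_d$, and solves the eigenvector equations directly: the case $\lambda=1$ yields a $\binom{d}{2}$-dimensional eigenspace parametrized freely by $\mathbf y\in\Rea^{\binom d2}$, while the case $\lambda\ne1$ reduces to the eigenvalue problem for $AA^{\T}=J+(d-2)I_d$, whose eigenvalues $2d-2$ and $d-2$ give $\lambda=n$ and $\lambda=n-d/2$. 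This runs uniformly for all $d\ge1$. Your $S_d$-isotypic decomposition is equally valid and arguably more structural, at the price of treating $d\le2$ separately; in effect your blocks~(ii) and~(iii) replicate the paper's reduction to $AA^{\T}$, since the trivial and standard $S_d$-representations are exactly the two eigenspaces of $J+(d-2)I_d$, and your block~(i) is the eigenspace for $\lambda=1$ intersected with $\ker A$.
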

From Propositions \ref{prop:minimally_rigid}
and \ref{prop:star_graph}, we obtain $a_d(S_{n,d})=1$ (except in the case when $d=1$ and $n=2$, where we have $S_{2,1}=K_2$ and $a_1(K_2)=2$, and the case when $d=2$ and $n=3$, where we have $S_{3,2}=K_3$ and $a_2(K_3)=3/2$; 
 see \cite[Proposition 7.6]{LNPR23+}).

Here, relying on Theorem \ref{thm:lower_stiffness_blow_up}, we present a new, simpler proof of Proposition \ref{prop:star_graph}.

\begin{proof}[Proof of Proposition \ref{prop:star_graph}]
    We work with the lower stiffness matrix $\lminusva{}{}(S_{n,d},p^*)$. By Lemma \ref{lemma:lower_stiffness_spectrum} (using the fact that $|E(S_{n,d})|=d n -\binom{d+1}{2}$), 
    the claim is equivalent to showing that the spectrum of $\lminusva{}{}(S_{n,d},p^*)$ is 
    \[
    \left\{ 1^{\left[dn-\binom{d+1}{2}-d\right]}, (n-d/2)^{[d-1]}, n^{[1]} \right\}.
    \]    
    First, notice that $(S_{n,d}, p^*)$ is a blow-up of the standard $d$-simplex. More precisely, we have
    \[
    (S_{n,d},p^*)= (K_{d+1}^{(a)},\blowup{p}{a}),
    \]
    where $p:[d+1]\to\Rea^d$ is defined by
    \[
        p(i)=\begin{cases}
            e_i & \text{if } 1\leq i \leq d,\\
            0 & \text{if } i=d+1,
        \end{cases}
    \]
    and $a:[d+1]\to \mathbb{N}$ is defined by
    \begin{equation*}
        a(i) = \begin{cases}
              1 & \text{if } 1\leq i\leq d,\\
            n-d & \text{if $i = d+1$}.
        \end{cases}
    \end{equation*}
    Moreover, note that, since $a(i)=1$ unless $i=d+1$, we have
    \[
        \sum_{\{i,j\}\in E(K_{d+1})} (a(i)-1)(a(j)-1)=0.
    \]
    Therefore, by Theorem \ref{thm:lower_stiffness_blow_up}, 
     \[
        \text{Spec}\left(\lminusva{}{}(S_{n,d},p^*)\right)= \left(\bigcup_{i\in [d+1]} \text{Spec}(\lminusva{i}{a}(K_{d+1},p))^{[a(i)-1]}\right)\cup \text{Spec}(\lminusva{}{a}(K_{d+1},p)).
    \]
    Since $a(i) = 1$ for $1\le i\le d$, we have
    \[
        \text{Spec}\left(\lminusva{}{}(S_{n,d},p^*)\right)=  \text{Spec}(\lminusva{d+1}{a}(K_{d+1},p))^{[n-d-1]}\cup \text{Spec}(\lminusva{}{a}(K_{d+1},p)).
    \]
    Using Lemma \ref{lemma:lower_local_stiffness}, it is easy to check that $\lminusva{d+1}{a}(K_{d+1},p) = I_d$, so 
    \[\text{Spec}(\lminusva{d+1}{a}(K_{d+1},p))^{[n-d-1]} = \left\{ 1\right\}^{\left[dn-d^2-d\right]}.\]
    Hence, we are left to show that
    \[
      \text{Spec}(\lminusva{}{a}(K_{d+1},p))=  \left\{ 1^{\left[\frac{d(d-1)}{2}\right]}, (n-d/2)^{[d-1]}, n^{[1]} \right\}.
    \]
    Let $e,e'\in E(K_{d+1})$ such that $|e\cap e'|=1$. Note that
    \begin{equation}\label{eq:theta1}
        \cos(\theta_p(e,e')) = \begin{cases}
            \frac{1}{2} & \text{if } d+1\notin e\cup e',\\
            \frac{\sqrt{2}}{2} & \text{if } d+1\in e\cup e',\, d+1\notin e\cap e',\\
            0 & \text{otherwise.}
        \end{cases}
    \end{equation}

    For convenience, let $\lminusva{}{a}=\lminusva{}{a}(K_{d+1},p)$, and let $k=n-d$. Note that $k>0$. We may identify the set of edges of $K_{d+1}$ that contain $d+1$ with the set $[d]$ (by mapping each such edge $e$ to the unique vertex in $e\setminus\{d+1\}$). Then, ordering the edges of $K_{d+1}$ so that  the edges containing $d+1$ appear first, we obtain, by Lemma \ref{lemma:lower_weighted_stiffness} and \eqref{eq:theta1},
    \[
    \lminusva{}{a} = 
    \begin{pmatrix}
    (k+1)I_d & \sqrt{\frac{k}{2}}A \\
    \sqrt{\frac{k}{2}}A^{\T} & L' 
    \end{pmatrix},
    \]
    where $L' \in \mathbb R^{{d\choose 2}\times {d\choose 2}}$ is the matrix defined by
    \begin{equation*}
        L'_{e,e'} = \begin{cases}
            2 & \text{if }e = e',\\
            \frac{1}{2} & \text{if } |e\cap e'|=1,\\
            0 & \text{otherwise,}
        \end{cases}
    \end{equation*}
    for all $e,e'\in\binom{[d]}{2}$, and $A \in \mathbb R^{d\times {d\choose 2}}$ is defined by
    \begin{equation*}
        A_{v,e} = \begin{cases}
            1 & \text{if }v\in e,\\
            0 & \text{otherwise,}
        \end{cases}
    \end{equation*}
    for all $v\in [d]$ and $e\in\binom{[d]}{2}$.
   Note that $L' = \frac{1}{2}A^{\T}A + I_{{d\choose 2}}$.
   
    Let $\mathbf{v}\in \Rea^{d+1\choose 2}$ be an eigenvector of $\lminusva{}{a}$ with eigenvalue $\lambda$. Let $\mathbf{x}\in \Rea^d$ be the vector consisting of the first $d$ coordinates of $\mathbf{v}$, and let $\mathbf{y}\in \Rea^{d\choose 2}$ be the vector consisting of its last $\binom{d}{2}$ coordinates. Since $\lminusva{}{a}\mathbf{v} = \lambda\mathbf{v}$, we obtain the following system of equations.
     \begin{equation}\label{eq:star1}
        \begin{cases}
            (k+1)\mathbf{x} + \sqrt{\frac{k}{2}}A\mathbf{y} = \lambda\mathbf{x}\\
            \sqrt{\frac{k}{2}}A^{\T}\mathbf{x} + L'\mathbf{y} = \lambda\mathbf{y}.
        \end{cases}
    \end{equation}
    Rearranging the first equation in \eqref{eq:star1}, we obtain
    \begin{equation}\label{eq:star2}
         A\mathbf{y} = \sqrt{\frac{2}{k}}\cdot(\lambda-k-1)\mathbf{x}.
    \end{equation}
      Using $L' = \frac{1}{2}A^{\T}A + I_{{d\choose 2}}$, and then
    substituting \eqref{eq:star2} into the second equation in \eqref{eq:star1}, we obtain, after some simplification, the following system of equations.
\begin{equation}\label{eq:star3}
  \begin{cases}
           A\mathbf{y} = \sqrt{\frac{2}{k}}\cdot(\lambda-k-1)\mathbf{x}.\\
            (\lambda-1) A^{\T} \textbf{x}= \sqrt{2k}\cdot (\lambda-1) y.
        \end{cases}
\end{equation}
We divide into two cases. If $\lambda=1$, the second equation holds trivially, and we are left with a single equation $A\textbf{y}= -\sqrt{2k} \textbf{x}$. Therefore, for every $\textbf{y}\in \Rea^{\binom{d}{2}}$, $\textbf{v}=(-A\textbf{y}/\sqrt{2k},\textbf{y})$ is an eigenvector of $\lminusva{}{a}$ with eigenvalue $1$. That is, $\lambda=1$ is an eigenvalue of $\lminusva{}{a}$ with multiplicity $\binom{d}{2}=\frac{d(d-1)}{2}$, as wanted.

Now, assume $\lambda\ne 1$. Substituting the second equation in \eqref{eq:star3} into the first one, and simplifying, we obtain
\[
    \begin{cases}
        A A^{\T} \textbf{x} = 2 (\lambda-k-1) \textbf{x},\\
        A^{\T} \textbf{x} = \sqrt{2k} \textbf{y}.
    \end{cases}
\]
Therefore, $\textbf{v}$ is an eigenvector of $\lminusva{}{a}$ with eigenvalue $\lambda$ if and only if $\textbf{x}$ is an eigenvector of $A A^{\T}$ with eigenvalue $2(\lambda-k-1)$, and $\textbf{y}=A^{\T}\textbf{x}/\sqrt{2k}$. In particular, the multiplicity of $\lambda$ as an eigenvalue of $\lminusva{}{a}$ is equal to the multiplicity of $2(\lambda-k-1)$ as an eigenvalue of $A A^{\T}$.

Notice that $AA^{\T} = J + (d-2)I_d$, where $J\in \Rea^{d\times d}$ is the all-ones matrix.
    It is easy to check that the all-ones vector, $\mathbf{1}_d\in \Rea^{d}$, is an eigenvector of $A A^{\T}$ with eigenvalue $2d-2$, and every vector in $\Rea^{d}$ orthogonal to $\mathbf{1}_d$ is an eigenvector of $A A^{\T}$ with eigenvalue $d-2$. 

Finally, note that $2d-2= 2(\lambda-k-1)$ for $\lambda=d+k=n$. Hence, $\lambda=n$ is a simple eigenvalue of $\lminusva{}{a}$. Similarly, we have $d-2=2(\lambda-k-1)$ for $\lambda= d/2+k= n-d/2$. Thus, $\lambda=n-d/2$ is an eigenvalue of $\lminusva{}{a}$ with multiplicity $d-1$, as wanted.

\end{proof}

\section{Concluding remarks}\label{sec:concluding_remarks}

In this paper, we investigated the effect of the blow-up operation on the stiffness matrix spectra of graphs and its application to the study of the $d$-dimensional algebraic connectivity of complete bipartite graphs.  
In particular, in Theorem \ref{thm:knm}, we showed that there is a constant $c_d>0$ such that, for large enough $n$ and $m$,  $a_d(K_{n,m})\ge c_d\cdot  \min\{n,m\}$. 
It would be interesting to try to extend our methods to the case of complete multipartite graphs.

\begin{conjecture}
     Let $d\geq 1$ and $k\geq 2$. Then, there exist $c_{d,k}, M_{d,k} > 0$ such that for all $n_1, n_2, \ldots, n_k \geq M_{d,k}$,
     \[
     a_d(K_{n_1, n_2, \ldots, n_k}) \geq c_{d,k}\cdot \min\{n-n_1, n-n_2, \ldots, n-n_k\},
     \]
     where $K_{n_1, n_2, \ldots, n_k}$ is the complete $k$-partite graph with sides of size $n_1,n_2,\ldots,n_k$,  respectively, and $n=n_1+\cdots+n_k$.
\end{conjecture}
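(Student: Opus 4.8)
The plan is to derive the conjecture from Theorem~\ref{thm:knm} by a short subgraph-monotonicity argument, with no new spectral computation. Order the parts so that $n_1\le n_2\le\cdots\le n_k$ (so that $\min_i\{n-n_i\}=n-n_k$), and merge the first $k-1$ of them into a single block of size $n-n_k=n_1+\cdots+n_{k-1}$. Every edge of the resulting complete bipartite graph $K_{n-n_k,\,n_k}$ joins this block to the last part and is therefore also an edge of $K_{n_1,\dots,n_k}$; since the two graphs have the same vertex set, $K_{n-n_k,\,n_k}$ is a spanning subgraph of $K_{n_1,\dots,n_k}$.

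First I would record that $a_d$ is monotone under deleting edges: if $G'$ is obtained from $G=(V,E)$ by removing some edges (keeping all of $V$), then for every $p\colon V\to\Rea^d$ the matrix $L(G,p)-L(G',p)$ is a sum of positive semidefinite rank-one terms, one for each deleted edge, so $L(G',p)\preceq L(G,p)$ and hence $\lambda_j(L(G',p))\le\lambda_j(L(G,p))$ for all $j$ by the min-max characterization of eigenvalues; taking $j=\binom{d+1}{2}+1$ and then the supremum over $p$ gives $a_d(G')\le a_d(G)$. Applied to the pair above, this yields $a_d(K_{n_1,\dots,n_k})\ge a_d(K_{n-n_k,\,n_k})$. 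Next I would choose $M_{d,k}$ large enough — for instance $M_{d,k}=\binom{d+2}{2}$ — so that, whenever all $n_i\ge M_{d,k}$, we have $n_k\ge d+1$, $n-n_k\ge(k-1)M_{d,k}\ge d+1$, and $n\ge kM_{d,k}\ge\binom{d+2}{2}$; these are exactly the hypotheses of Theorem~\ref{thm:knm} for $K_{n-n_k,\,n_k}$, so $a_d(K_{n-n_k,\,n_k})\ge c_d\cdot\min\{n-n_k,\,n_k\}$ with the constant $c_d$ of that theorem. Finally, since each $n_i\le n_k$ we have $n-n_k=\sum_{i<k}n_i\le(k-1)n_k$, hence $\min\{n-n_k,\,n_k\}\ge\frac1{k-1}(n-n_k)=\frac1{k-1}\min_i\{n-n_i\}$. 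Chaining these inequalities proves the conjecture with $c_{d,k}=c_d/(k-1)$.

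In this form there is essentially no obstacle: the only care needed is the bookkeeping in the choice of $M_{d,k}$ and the elementary estimate $\min\{n-n_k,n_k\}\ge(n-n_k)/(k-1)$. The genuinely hard — and, I expect, intended — version of the problem is to obtain a \emph{good} constant $c_{d,k}$, ideally with an explicit dependence on $d$ and $k$, since the route above both inherits the non-explicit $c_d$ of Theorem~\ref{thm:knm} and loses a further factor of $k-1$. For that one would instead treat $K_{n_1,\dots,n_k}$ directly as a blow-up, writing it as $(K_{n_1^0,\dots,n_k^0})^{(a)}$ for a suitable $d$-rigid ``core'' complete multipartite graph and applying Theorem~\ref{thm:blow_up} together with Lemma~\ref{lemma:eigenvalue_bound}; but the naive bound from Lemma~\ref{lemma:eigenvalue_bound} becomes lossy precisely when the part sizes are very unbalanced, as then the quantities $h(a)$ and $g(a)$ degrade, so a sharp result would require a direct analysis of the weighted stiffness matrix $L_a(K_{n_1^0,\dots,n_k^0},p)$ for well-chosen embeddings $p$, in the spirit of the explicit embeddings used in this paper for $K_{3,3}$ and $K_{5,5}$.
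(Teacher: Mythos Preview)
The paper states this as an \emph{open conjecture} in Section~\ref{sec:concluding_remarks}; it does not provide a proof. Your argument, by contrast, actually resolves the conjecture as literally stated, and it is correct: the monotonicity $a_d(G')\le a_d(G)$ for spanning subgraphs follows exactly as you say from $L(G,p)=\sum_{e\in E} r_e r_e^{\T}$ and Courant--Fischer, the spanning inclusion $K_{n-n_k,\,n_k}\subset K_{n_1,\dots,n_k}$ is immediate, and your choice $M_{d,k}=\binom{d+2}{2}$ puts you squarely inside the hypotheses of Theorem~\ref{thm:knm}. The arithmetic $\min\{n-n_k,n_k\}\ge(n-n_k)/(k-1)$ is fine.

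Since there is no paper proof to compare against, the relevant observation is that your route sidesteps the very obstacle the paper flags immediately after the conjecture --- namely, that $d$-rigidity of $K_{n_1,\dots,n_k}$ is open for $k\ge3$ --- by passing through a bipartite spanning subgraph whose rigidity is already handled by Theorem~\ref{thm:whiteley}. This suggests, as you yourself note, that the authors likely intended the conjecture either in a sharper quantitative form (a constant $c_{d,k}$ not degrading with $k$, or even the exact asymptotic constant), or for a broader range of part sizes than ``all $n_i$ sufficiently large''; the direct blow-up approach via Theorem~\ref{thm:blow_up} and Lemma~\ref{lemma:eigenvalue_bound} that you sketch at the end is the natural way to pursue that, and your diagnosis of where it becomes lossy (unbalanced $a$ making $h(a)$ small) is accurate.
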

Note that $a_1(K_{n_1,n_2,\ldots,n_k})=\min\{n-n_1, n-n_2, \ldots, n-n_k\}$ (see, for example, \cite[Proposition 3.2]{klee2022eigenvalues}). Let us mention that, to the best of our knowledge, the problem of determining, for given integers $n_1,n_2,\ldots,n_k$ and $d\ge 2$, whether the complete $k$-partite graph $K_{n_1,n_2,\ldots,n_k}$ is $d$-rigid remains open for $k\ge 3$.
   
The problem of determining the exact value of $a_d(K_{n,m})$, for $d\ge 2$, remains unresolved. In the special case $d=2$ and $n=m=3$, we conjecture, based on computer experiments, that the lower bound proved in Theorem \ref{thm:k33_tight} is tight.

\begin{conjecture} Let $\lambda\approx 0.6903845$ be the unique positive real root of the polynomial $176 x^4-200 x^3+47 x^2+18 x-9$. Then,
    \[a_2(K_{3,3}) = 2(1-\lambda)\approx 0.6192309.\] 
\end{conjecture}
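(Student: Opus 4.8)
The lower bound $a_2(K_{3,3}) \ge 2(1-\lambda)$ is precisely Theorem \ref{thm:k33_tight}, so the remaining task is the matching upper bound $a_2(K_{3,3}) \le 2(1-\lambda)$. The first step is to pass to lower stiffness matrices. Since $d|V| = 12$ and $|E| = 9$, Lemma \ref{lemma:lower_stiffness_spectrum} gives $\lambda_4(L(K_{3,3},p)) = \lambda_1(L^{\downarrow}(K_{3,3},p))$ for every embedding $p$, whence
\[
a_2(K_{3,3}) = \sup_{p:[6]\to\Rea^2} \lambda_1(L^{\downarrow}(K_{3,3},p)).
\]
For any embedding in which adjacent vertices are distinct (degenerate embeddings can only decrease this quantity, so they may be discarded), Lemma \ref{lemma:lower_weighted_stiffness} with $f\equiv 1$ lets us write $L^{\downarrow}(K_{3,3},p) = 2I_9 + C(p)$, where $C(p)$ is the symmetric ``cosine matrix'' with $C(p)_{e,e'} = \cos(\theta_p(e,e'))$ when $|e\cap e'| = 1$ and $0$ otherwise. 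The goal thus becomes to show $\lambda_1(C(p)) \le -2\lambda$ for every embedding, with equality approached in a limit.

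The central structural feature is that $C(p)$ depends on $p$ only through its nine angle parameters, which are invariant under translations, rotations, scalings and reflections. The relevant configuration space is therefore $6$ points in the plane modulo similarity, an $8$-dimensional \emph{non-compact} manifold, and as the analysis in Section \ref{sec:k33} shows, the supremum is not attained by any finite embedding but only in a limit where three vertices escape to infinity while the unit offsets defining the other three survive. My plan is to compactify this space by adjoining its ``boundary at infinity'', whose points correspond exactly to limiting cosine matrices of the type $L^{\downarrow}_{\alpha,\beta}$ from Lemma \ref{lemma:k33_ab_spectrum} (together with the analogous families produced by other escape patterns and vertex relabelings). The continuous extension of $\lambda_1$ to this compact space attains its maximum, which lies either at an interior configuration or on the boundary. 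On the boundary, Lemma \ref{lemma:k33_optimal_a_b} already establishes that $\max_{\alpha,\beta}\lambda_1(L^{\downarrow}_{\alpha,\beta}) = 2(1-\lambda)$, so after checking that the remaining escape patterns are weaker or symmetry-equivalent, the entire problem reduces to controlling the interior critical points.

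For the interior analysis I would use first-order optimality. At a configuration where $\lambda_1(L^{\downarrow})$ is simple with unit eigenvector $\psi$, its derivative along any infinitesimal motion of the vertices equals $\psi^{\T}(\partial C/\partial t)\,\psi$, and stationarity forces this to vanish for every admissible motion, while $\psi$ itself obeys the relation $(\lambda_1 - 2)\psi_e = \sum_{e':\,|e\cap e'|=1}\cos(\theta_p(e,e'))\,\psi_{e'}$ of Lemma \ref{lemma:lowerlaplacian}. Expanding $\partial\cos\theta_p(e,e')/\partial t$ for motions of each of the six vertices produces a coupled system of vector equations relating $\psi$ to the geometry. I would exploit the reflection symmetry of the conjectured extremal configuration (swapping the two ``wings'' of the construction, fixing vertices $1$ and $2$) to cut down the unknowns, classify the stationary solutions, and verify that each of them yields $\lambda_1 \le 2(1-\lambda)$. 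Combined with the boundary value, this would give $\sup_p \lambda_1(L^{\downarrow}) = 2(1-\lambda)$ and hence the conjecture.

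The genuine obstacle is exactly this classification of interior critical points. Unlike the lower bound, which reduces to a single explicit two-parameter family, the upper bound is a global optimization over a high-dimensional non-compact domain, and there is no a priori guarantee that the stationarity equations have few solutions or admit closed-form analysis. I would expect to need either a monotonicity argument showing that $\lambda_1$ can always be strictly increased by pushing the configuration toward the boundary at infinity (so that no interior local maximum exceeds the boundary value), or a finer use of the Bolker--Roth conic structure of $K_{3,3}$ frameworks to constrain the support of the optimal eigenvector $\psi$. Failing such a structural simplification, a fully rigorous treatment of the interior case would likely require interval-arithmetic certification of the numerically observed fact that every interior stationary value lies strictly below $2(1-\lambda)$.
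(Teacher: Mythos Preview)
The statement you are addressing is a \emph{conjecture} in the paper, not a theorem: the paper does not prove it and does not claim to. Immediately after stating it, the authors remark that the best known upper bound is only $a_2(K_{3,3})\le 1$, coming from Proposition~\ref{prop:minimally_rigid}. So there is no ``paper's own proof'' to compare your proposal against; any complete argument for the upper bound $a_2(K_{3,3})\le 2(1-\lambda)$ would be a new result.

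Your proposal is an honest outline rather than a proof, and you yourself identify the real gap: the global optimization of $\lambda_1(L^{\downarrow}(K_{3,3},p))$ over the (non-compact) configuration space. The compactification idea and the reduction to boundary strata plus interior critical points is reasonable, and the boundary stratum you single out is indeed the one analyzed in Lemmas~\ref{lemma:k33_ab_spectrum} and~\ref{lemma:k33_optimal_a_b}. But several steps are not established. First, you assert that Lemma~\ref{lemma:k33_optimal_a_b} gives $\max_{\alpha,\beta}\lambda_1(L^{\downarrow}_{\alpha,\beta})=2(1-\lambda)$; in fact that lemma only evaluates $\lambda_1$ at the specific pair $(\alpha_0,\beta_0)$, and the paper never proves that this is the maximum over all $(\alpha,\beta)$. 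Second, the full boundary of the compactified configuration space contains many escape patterns beyond the one-parameter family $p_{\alpha,\beta,c}$ (different subsets of vertices going to infinity, different relative rates), and ``checking that the remaining escape patterns are weaker or symmetry-equivalent'' is substantial work you have not carried out. Third, and most seriously, the interior analysis is left at the level of ``I would expect to need either a monotonicity argument \ldots\ or a finer use of the Bolker--Roth conic structure \ldots\ or interval-arithmetic certification'': none of these is supplied, and this is precisely the step that makes the conjecture open.
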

The best currently known bound upper bound on $a_2(K_{3,3})$, which follows from Proposition \ref{prop:minimally_rigid} (noting that $K_{3,3}$ is minimally $2$-rigid), is $a_2(K_{3,3})\le 1$.

\bibliography{main}

\end{document}